\newcommand{\yngt}{\youngtabloid}
\newcommand{\tyoung}[1]{\scalebox{0.8}{$\displaystyle\young#1$}}
\newcommand{\syoung}[1]{\scalebox{0.75}{$\displaystyle\young#1$}}
\newcommand{\ssyoung}[1]{\scalebox{0.9}{$\displaystyle\young#1$}}
\newcommand{\sssyoung}[1]{\scalebox{0.95}{$\displaystyle\young#1$}}
\newcommand{\leqnomode}{\tagsleft@true}
\newcommand{\reqnomode}{\tagsleft@false}
\newtheorem{theorem}{Theorem}[section]
\newtheorem{lemma}[theorem]{Lemma}
\newtheorem{corollary}[theorem]{Corollary}
\newtheorem{proposition}[theorem]{Proposition}
\theoremstyle{definition}
\newtheorem{definition}[theorem]{Definition}
\newtheorem{example}[theorem]{Example}
\newtheorem{algorithm}[theorem]{Algorithm}
\DeclareMathOperator{\GL}{GL}
\DeclareMathOperator{\SL}{SL}
\DeclareMathOperator{\sgn}{sgn}
\DeclareMathOperator{\exw}{Ext}
\DeclareMathOperator{\Hom}{Hom}
\DeclareMathOperator{\Stab}{Stab}
\DeclareMathOperator{\Sym}{Sym}
\DeclareMathOperator{\Inf}{Inf}
\newcommand{\CS}{\mathrm{CS}}
\newcommand{\e}{\mathrm{e}}
\renewcommand{\theta}{\vartheta}
\newcommand{\N}{\mathbb{N}}
\newcommand{\Q}{\mathbb{Q}}
\newcommand{\Ind}{\big\uparrow}
\newcommand{\ind}{\!\!\uparrow}
\newcommand{\res}{\!\!\downarrow}
\newcounter{thmlistcnt}
\newenvironment{thmlist}%
	{\setcounter{thmlistcnt}{0}%
	\begin{list}{\emph{(\roman{thmlistcnt})}}{%
		\usecounter{thmlistcnt}%
		\setlength{\topsep}{0pt}%
		\setlength{\leftmargin}{24pt}%
		\setlength{\itemsep}{0pt}%
		\setlength{\labelwidth}{24pt}
		\setlength{\itemindent}{0pt}}%
	}%
	{\end{list}}%
\newcounter{defnlistcnt}
\newenvironment{defnlist}%
	{\setcounter{defnlistcnt}{0}%
	\begin{list}{(\roman{defnlistcnt})}{%
		\usecounter{defnlistcnt}%
		\setlength{\topsep}{0pt}%
		\setlength{\leftmargin}{32pt}%
		\setlength{\itemsep}{0pt}%
		\setlength{\labelwidth}{32pt}
		\setlength{\itemindent}{0pt}}%
	}%
	{\end{list}}%
\newcommand{\join}{\sqcup}
\newcommand{\pp}{\cdot}
\newcommand{\indmne}{\,\ind_{S_3 \wr S_8}^{S_{24}}}
\newcommand{\indmn}{\,\ind_{S_m \wr S_n}^{S_{mn}}}
\newcommand{\Indmn}{\Ind_{S_m \wr S_n}^{S_{mn}}}
\newcommand{\RPP}{\mathrm{RPP}}
\newcommand{\CPP}{\mathrm{CPP}}
\renewcommand{\sp}{\,,\ \rule[-12pt]{0pt}{30pt}}
\newcommand{\hs}{\hspace*{0.1in}}
\newcommand{\hsss}{\hspace*{0.02in}}
\newcommand{\id}{\mathrm{id}}
\newcommand{\bT}{\mathbf{T}}
\newcommand{\bS}{\mathbf{S}}
\newcommand{\bU}{\mathbf{U}}
\newcommand{\bV}{\mathbf{V}}
\newcommand{\smallCalT}{\scalebox{0.8}{$\scriptstyle\mathcal{T}$}}
\newcommand{\bTT}{\bT_{\smallCalT}}
\newcommand{\vsmallCalT}{\scalebox{0.5}{$\scriptstyle\mathcal{T}$}}
\newcommand{\bTTs}{\bT_{\vsmallCalT}}
\newcommand{\D}{D} 
\newcommand{\Sind}{\mathbf{S}}
\newcommand{\timesn}{n}
\newcommand{\f}{f_\mathcal{T}}
\newcommand{\g}{g_\mathcal{T}}
\newcommand{\overf}{\overline{f}_\mathcal{T}}
\newcommand{\overg}{\overline{g}_\mathcal{T}}
\newcommand{\NS}{\mathcal{N}\hskip-0.5pt\mathcal{S}}
\newcommand{\NR}{\mathcal{N}\hskip-0.5pt\mathcal{R}}
\newcommand{\oa}{1_1}
\newcommand{\ob}{1_2}
\newcommand{\oc}{1_3}
\newcommand{\od}{1_4}
\renewcommand{\oe}{1_5}
\newcommand{\of}{1_6}
\newcommand{\og}{1_7}
\newcommand{\oh}{1_8}
\newcommand{\ta}{2_1}
\newcommand{\tb}{2_2}
\newcommand{\tc}{2_3}
\newcommand{\td}{2_4}
\newcommand{\te}{2_5}
\newcommand{\da}{3_1}
\newcommand{\db}{3_2}
\newcommand{\onep}{1^\star}
\newcommand{\twop}{2^\star}
\newcommand{\threep}{3^\star}
\newcommand{\fourp}{4^\star}
\newcommand{\fivep}{5^\star}
\newcommand{\sixp}{6^\star}
\newcommand{\row}{\mathrm{row}}
\newcommand{\col}{\mathrm{col}}
\renewcommand{\i}{\mathrm{disj}}
\newcommand{\entry}{\mathrm{entry}}
\newcommand{\maj}{\mathrm{maj}}
\newcommand{\tto}{t_1}
\newcommand{\ttt}{t_2}
\newcommand{\ttd}{t_3}
\newcommand{\ttf}{t_5}
\newcommand{\tts}{t_6}
\newcommand{\ttn}{t_9}
\newcommand{\exu}{u}
\renewcommand{\exw}{w}
\newcommand{\exv}{v}
\newcommand{\exx}{x}
\newcommand{\erow}{e_\row}
\newcommand{\ecol}{e_\col}
\newcommand{\R}{\mathcal{P}}
\newcommand{\nuOmega}{{\Omega_\nu}}
\newcommand{\Gnu}{{G_\nu}}
\newcommand{\bUT}{\bT}
\newcommand{\bVV}{\bV}
\subjclass[2010]{20C30, secondary 20C15, 05E05}
\author{Rowena Paget}
\address[R. Paget]{School of Mathematics, Statistics and Actuarial Science, University of Kent,
Canterbury, CT2 7NF, U.K.}
\email{R.E.Paget@kent.ac.uk}
\author{Mark Wildon}
\address[M. Wildon]{Department of Mathematics, Royal Holloway, University of London, Egham, TW20 0EX,
U.K.}
\email{mark.wildon@rhul.ac.uk}
\begin{document}

\thispagestyle{empty}

\title[Generalized Foulkes modules]{Generalized Foulkes modules and
maximal and minimal constituents of plethysms of Schur functions}


\date{September 2018}
\maketitle
\thispagestyle{empty}

\begin{abstract}
This paper proves a combinatorial rule giving all maximal and minimal partitions $\lambda$ such
that the Schur function $s_\lambda$ appears in a plethysm of two arbitrary Schur functions.
Determining the decomposition of these plethysms
 has been identified by Stanley as a key open problem in algebraic combinatorics.
As corollaries we prove three conjectures of Agaoka on the partitions labelling
the lexicographically
greatest and least Schur functions appearing in an arbitrary plethysm. We also show that the multiplicity
of the Schur function labelled by the lexicographically least  constituent may
be arbitrarily large.
The proof is carried out in the symmetric group and gives an explicit non-zero homomorphism
corresponding to each maximal or minimal partition.
\end{abstract}

\section{Introduction}

Fix $m$, $n \in \N$ and let $S_m \wr S_n$ be the wreath product
of the symmetric groups $S_m$ and $S_n$, acting as a transitive imprimitive
subgroup of $S_{mn}$. Let $\mu$ and $\nu$ be partitions of $m$ and~$n$, respectively.
Let $S^\lambda$ denote the Specht module
for $\Q S_r$ labelled by the partition $\lambda$ of a natural number~$r$.
The object of this article is to prove Theorems~\ref{thm:mainMaximal} and~\ref{thm:main} below
which give a combinatorial
characterization of the maximal and
minimal partitions~$\lambda$ in the dominance order on partitions of $mn$
such that $S^\lambda$ is a summand of the \emph{generalized Foulkes module}
\[ H_\mu^\nu = (S^\mu \oslash S^\nu) \indmn. \]
Here $S^\mu \oslash S^\nu$ denotes the $\Q (S_m \wr S_n)$-module with underlying vector
space $(S^{\mu})^{\otimes n} \otimes S^\nu$ defined in \S\ref{subsec:oslash}.
Two notable features of the proof are that it is carried out entirely in the symmetric
group, and that it gives an explicitly defined non-zero $\Q S_{mn}$-homomorphism
$S^\lambda \rightarrow H_\mu^\nu$ for each maximal or minimal partition~$\lambda$.
The required background is relatively light: a reader familiar with the basic results
on symmetric group representations in \cite{James} should find the proof is self-contained.

Restated in the language of polynomial representations of infinite general linear groups,
Theorems~\ref{thm:mainMaximal} and~\ref{thm:main} determine the maximal and minimal partitions
labelling the irreducible summands $\nabla^\lambda(E)$ of $\nabla^\nu \bigl( \nabla^\mu E \bigr)$, where
$\nabla^\lambda$ is the Schur functor for the partition $\lambda$ and $E$ is a 
$\Q$-vector space of sufficiently
high dimension.
In particular, the special cases $\nu = (n)$ or $\nu = (1^n)$ give
new results on the summands of $\Sym^n \bigl( \nabla^\mu E)$ and $\bigwedge^n \bigl(
\nabla^\mu E)$.
Equivalently, Theorems~\ref{thm:mainMaximal} and~\ref{thm:main} 
determine the maximal and minimal partitions
$\lambda$ such that~$s_\lambda$ appears with a non-zero coefficient in the plethysm
$s_\nu \circ s_\mu$ of the Schur functions $s_\nu$ and~$s_\mu$. 
We discuss these connections
and give further applications of our main theorem in \S\ref{sec:applications} below.
In particular, we prove three
conjectures of Agaoka \cite[Conjectures~1.2, 2.1 and 4.1]{Agaoka} 
on the lexicographically greatest and least constituents of $s_\nu \circ s_\mu$.
We survey the relatively few existing results on these plethysms, and give
some further motivation for the study of plethysms, later in the introduction.

The following combinatorial definitions are needed to state our main theorems.
Example~\ref{ex:ex21} below illustrates these definitions and theorems.

\begin{definition}{\ }\label{defn:families}
\begin{defnlist}
\item A tableau with entries in $\N$ is \emph{conjugate-semistandard} if its rows are 
strictly increasing and its columns are weakly increasing.

\item A \emph{conjugate-semistandard tableau family} of \emph{shape} $\mu^d$ is
a set of $d$  conjugate-semistandard $\mu$-tableaux.

\item Let $\kappa$ be a partition of $n$ with exactly $c$ parts.
A \emph{conjugate-semistandard tableau family tuple}
of \emph{shape $\mu^\kappa$} is a tuple $(\mathcal{T}_1,\ldots, \mathcal{T}_c)$
where $\mathcal{T}_i$ is a conjugate-semistandard tableau family of shape $\mu^{\kappa_i}$,
for each $i$.

\item 
Let $(\mathcal{T}_1,\ldots, \mathcal{T}_c)$ be a conjugate-semistandard
tableau family tuple. Let $M$ be the greatest entry of the $\mu$-tableaux
in the families $\mathcal{T}_1,\ldots,\mathcal{T}_c$.
 The \emph{weight} of $(\mathcal{T}_1,\ldots, \mathcal{T}_c)$
is the composition $(\gamma_1,\ldots,\gamma_M)$
such that the total number of occurrences of $j$
in the $\mu$-tableaux in the families $\mathcal{T}_1,\ldots,\mathcal{T}_c$ is
$\gamma_j$, for each $j \in \{1,\ldots, M\}$.

\item The \emph{type} of a conjugate-semistandard tableau family tuple of
weight~$\gamma$, where $\gamma$ is a partition, is the conjugate partition $\gamma'$.

\end{defnlist}
\end{definition}

Our main theorems are as follows.

\begin{theorem}\label{thm:mainMaximal}
Let $\mu$ be a partition of $m$ and let $\nu$ be a partition of $n$.
The maximal partitions $\lambda$ in the dominance order
that label 
the Specht modules~$S^\lambda$ such
that $S^\lambda$ is a summand of $H_\mu^\nu$ are
precisely the maximal partitions that are weights of  conjugate-semistandard
tableau family tuples of shape $\eta^{\nu'}\!$ where $\eta = \mu'$. 
\end{theorem}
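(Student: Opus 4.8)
The plan is to combine the theory of semistandard homomorphisms with an induction on $n$, reducing the statement about $H_\mu^\nu$ to statements about the ordinary Foulkes-type modules. The first step is to establish the result in the case where $\nu = (1^n)$, so that $H_\mu^\nu = (S^\mu)^{\otimes n} \oslash \operatorname{sgn}_{S_n}) \indmn$, and in the case $\nu = (n)$, where $\oslash$ uses the trivial $S_n$-module. In both cases one must identify, via Young's rule together with the branching from $S_m \wr S_n$ down to $(S_m)^{\times n}$, which Specht modules $S^\lambda$ can be maximal summands. The key observation here is that, by the standard theory in \cite{James}, a homomorphism $S^\lambda \to M^\alpha$ into a Young permutation module is governed by semistandard $\lambda$-tableaux of content $\alpha$, and the existence of such a tableau of maximal shape translates precisely into the combinatorial data of a conjugate-semistandard tableau family of shape $\mu' = \eta$ once one conjugates to pass from the permutation-module picture (rows weakly increasing) to the Specht-module picture that produces the partition $\mu$ on the tableau cells. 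The conjugation explains why $\eta = \mu'$ appears in the statement and why the roles of rows and columns are interchanged.

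Next I would handle a general partition $\nu$ of $n$ with exactly $c$ parts, say $\nu' = \kappa = (\kappa_1 \geq \cdots \geq \kappa_c)$. The idea is that $S^\nu$ can be realized, up to modules labelled by partitions strictly dominating $\nu$ in the relevant sense, inside the permutation module $M^{\nu'}$, equivalently $S^\nu$ appears with multiplicity one as a maximal summand of $M^{\nu'} = \operatorname{Ind}_{S_{\kappa_1} \times \cdots \times S_{\kappa_c}}^{S_n} \mathbf{1}$. Inducing the wreath product construction up correspondingly expresses $H_\mu^\nu$, modulo summands $S^\lambda$ with $\lambda$ non-maximal, as a summand of
\[
  \bigl( (S^\mu \oslash \mathbf{1}_{S_{\kappa_1}}) \times \cdots \times (S^\mu \oslash \mathbf{1}_{S_{\kappa_c}}) \bigr)\!\uparrow,
\]
induced from $(S_m \wr S_{\kappa_1}) \times \cdots \times (S_m \wr S_{\kappa_c})$ up to $S_{mn}$. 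By the $\nu = (n)$ case applied to each factor, the maximal constituents of the $i$-th factor are labelled by the maximal weights of conjugate-semistandard tableau families of shape $\eta^{\kappa_i}$. Taking the product over $i$ and inducing corresponds on the combinatorial side to forming the tuple $(\mathcal{T}_1,\ldots,\mathcal{T}_c)$ and adding up the weights, which is exactly a conjugate-semistandard tableau family tuple of shape $\eta^{\nu'}$; the maximal $\lambda$ appearing are then the maximal such combined weights. One must check that the "modulo non-maximal terms" errors introduced at each stage cannot themselves create a spurious maximal constituent, nor destroy a genuine one: this follows because dominance is compatible with the inductive product operation and because the error terms are, by construction, dominated by the main terms.

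Finally, to upgrade "summand of a module whose maximal constituents are described combinatorially" to "summand of $H_\mu^\nu$ with the same maximal constituents", and to obtain the explicit non-zero homomorphism $S^\lambda \to H_\mu^\nu$ promised in the introduction, I would write down the composite semistandard homomorphism directly from the tableau family tuple data: each $\mu$-tableau in each family determines a map into one tensor factor of $(S^\mu)^{\otimes n}$, the tuple structure records how these are distributed among the $n$ copies, and the symmetrization over $S_\nu$ (via a polytabloid in $S^\nu$) glues them into a single $\Q(S_m \wr S_n)$-homomorphism, which is then induced up. The main obstacle is showing this explicit homomorphism is non-zero for every maximal weight $\lambda$ — equivalently, that no cancellation occurs when the various semistandard maps are combined and induced. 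I expect this to require a careful choice of a distinguished "leading" tabloid (the one of greatest weight in a suitable order on the relevant Young-module basis) on which exactly one term in the sum is supported, so that the image of a generating polytabloid of $S^\lambda$ is visibly non-zero; controlling this leading term, and verifying that maximality of $\lambda$ is precisely what guarantees its survival, is the technical heart of the argument.
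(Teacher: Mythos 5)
Your proposal diverges substantially from the paper's route — the paper deduces Theorem~\ref{thm:mainMaximal} from Theorem~\ref{thm:main} by a short sign-twist ($H_\mu^\nu \otimes \sgn_{S_{mn}}$ is $H_{\mu'}^{\nu}$ or $H_{\mu'}^{\nu'}$ according to the parity of $m$, so maximal summands of $H_\mu^\nu$ correspond to minimal summands of a conjugate module) — and, as it stands, it has two genuine gaps.

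First, your base case is not a consequence of the standard theory of homomorphisms into Young permutation modules. The module $H_\mu^{(n)} = (S^\mu \oslash \Q_{S_n})\indmn$ is not of the form $M^\alpha$: even after replacing $S^\mu$ by $M^\mu$ one obtains a permutation module on the cosets of $S_\mu \wr S_n$, which is not a Young subgroup, so Young's rule and the classification of $\Hom(S^\lambda, M^\alpha)$ by semistandard tableaux of content $\alpha$ simply do not apply. Identifying the maximal constituents of $H_\mu^{(n)}$ for arbitrary $\mu$ is already the full difficulty of the theorem (it is the case $c=1$ of a tuple), and the paper needs all of \S\ref{sec:models}--\S\ref{sec:toTuple} — the models $N/R$, $\widetilde{N}/\widetilde{R}$, the explicit non-vanishing argument of Proposition~\ref{prop:nonzero}, and crucially the upper bound of Theorem~\ref{thm:toFamily} — to handle it. Your plan assumes this case as an input. (A smaller but real error in the same step: $S^\nu$ is not a maximal summand of $M^{\nu'}$; the constituents of $M^{\nu'}$ are the $S^\delta$ with $\delta \unrhd \nu'$, so $S^\nu$ need not occur in $M^{\nu'}$ at all. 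What is true, and what the paper uses as Lemma~\ref{lemma:Mtilde}, is that $S^\nu$ is the unique maximal summand of $\widetilde{M}^\nu = M^{\nu'}\otimes\sgn$.)

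Second, the reduction ``modulo non-maximal terms'' is not sound. Replacing $S^\nu$ by a permutation or twisted permutation module introduces error summands $(S^\mu \oslash S^\delta)\indmn$ with $\delta$ comparable to $\nu$, but comparability of $\delta$ and $\nu$ does not transfer to comparability of the constituents of $(S^\mu \oslash S^\delta)\indmn$ and $(S^\mu\oslash S^\nu)\indmn$ in the dominance order on partitions of $mn$: a genuine maximal constituent of $H_\mu^\nu$ may be strictly dominated by a constituent contributed by an error term (and hence invisible in the larger module), and a maximal constituent of the larger module may live entirely in the error terms. The assertion that ``dominance is compatible with the inductive product operation'' is exactly the unproved monotonicity statement one would need, and it is not available. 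The paper avoids this by proving a two-sided sandwich: every tuple of type $\lambda$ yields a non-zero map $\widetilde{M}^\lambda \rightarrow H_\mu^\nu$ and hence a summand $S^{\lambda^\dagger}$ with $\lambda^\dagger \unlhd \lambda$ (Proposition~\ref{prop:nonzero} plus Lemma~\ref{lemma:Mtilde}), while conversely every summand $S^\lambda$ forces the existence of a tuple of type exactly $\lambda$ (Theorem~\ref{thm:toFamily}); only the combination of both bounds pins down the extremal constituents. Your leading-term idea for non-vanishing is in the spirit of Proposition~\ref{prop:nonzero}, but note that in the paper non-vanishing holds for every tuple, not only extremal ones — extremality enters only in the final sandwich, not in the survival of the leading term — and without the upper-bound half your argument cannot exclude spurious maximal constituents.
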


\begin{theorem}\label{thm:main}
Let $\mu$ be a partition of $m$ and let $\nu$ be a partition of $n$. 
Set $\kappa = \nu'$ if $m$ is even, and $\kappa = \nu$ if $m$ is odd.
The minimal partitions $\lambda$ in the dominance order
that label the 
Specht modules $S^\lambda$
such that $S^\lambda$ is a summand of $H_\mu^\nu$
are precisely the minimal types of the
conjugate-semistandard tableau family
tuples of shape $\mu^\kappa$.
\end{theorem}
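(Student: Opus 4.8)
The plan is to reduce the minimal-constituent statement to the maximal-constituent statement of Theorem~\ref{thm:mainMaximal} by exploiting the involution $S^\lambda \mapsto S^{\lambda'}$ given by tensoring with the sign representation of $S_{mn}$, together with the behaviour of the module $S^\mu \oslash S^\nu$ under sign twists. First I would record the effect of $\otimes \sgn$ on the generalized Foulkes module: since the restriction of $\sgn_{S_{mn}}$ to $S_m \wr S_n$ is $\sgn_{S_m}^{\otimes n} \otimes \sgn_{S_n}$ when $m$ is even, and is $\sgn_{S_m}^{\otimes n} \otimes (\sgn_{S_n} \otimes (\text{sign of the base } S_n\text{-action on }n\text{ blocks of odd size}))$ — more precisely, conjugating a block permutation of $mn$ points arranged in $n$ blocks of size $m$ contributes $\sgn^m$ on each transposition of blocks — one gets
\[
  \bigl(S^\mu \oslash S^\nu\bigr) \otimes \sgn_{S_{mn}} \;\cong\; S^{\mu'} \oslash S^{\nu} \quad (m \text{ even}), \qquad
  S^{\mu'} \oslash S^{\nu'} \quad (m \text{ odd}).
\]
Inducing and using that $\otimes\sgn$ commutes with induction, this yields $H_\mu^\nu \otimes \sgn \cong H_{\mu'}^{\nu}$ for $m$ even and $H_\mu^\nu \otimes \sgn \cong H_{\mu'}^{\nu'}$ for $m$ odd. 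Consequently $S^\lambda$ is a summand of $H_\mu^\nu$ if and only if $S^{\lambda'}$ is a summand of the sign-twisted module; and since $\lambda \mapsto \lambda'$ is an order-reversing bijection on the dominance lattice, minimal $\lambda$ for $H_\mu^\nu$ correspond exactly to maximal $\lambda'$ for the twisted module.

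Next I would feed this through Theorem~\ref{thm:mainMaximal}. Applied to $H_{\mu'}^{\nu}$ (the case $m$ even), that theorem describes the maximal constituents as the maximal weights of conjugate-semistandard tableau family tuples of shape $\eta^{\nu'}$ with $\eta = (\mu')' = \mu$; applied to $H_{\mu'}^{\nu'}$ (the case $m$ odd) it gives shape $\eta^{(\nu')'} = \mu^{\nu}$. Writing $\kappa = \nu'$ when $m$ is even and $\kappa = \nu$ when $m$ is odd unifies both: the maximal constituents of the twisted module are the maximal weights of conjugate-semistandard tableau family tuples of shape $\mu^\kappa$. It then remains to transport this back across $\lambda \mapsto \lambda'$: the minimal constituents of $H_\mu^\nu$ are the conjugates of these maximal weights. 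The final combinatorial point is to identify the conjugate of a maximal weight of a tableau family tuple with the minimal \emph{type} of that tuple — but this is precisely the content of Definition~\ref{defn:families}(v), which defines the type as the conjugate partition $\gamma'$ of the weight $\gamma$; one needs only the elementary fact that $\gamma \mapsto \gamma'$ reverses dominance, so the set of conjugates of the dominance-maximal weights equals the set of dominance-minimal types.

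The main obstacle I anticipate is the first step: pinning down precisely the sign character of $S_{mn}$ restricted to $S_m \wr S_n$, and hence the correct twist $S^\mu \oslash S^\nu \mapsto S^{\mu'} \oslash S^{\nu^{(?)}}$, including the parity-of-$m$ dichotomy. One must check carefully that a transposition of two blocks of size $m$ in the base group acts on the $mn$ points as a product of $m$ disjoint transpositions, contributing $(-1)^m$; this is what forces $\nu$ versus $\nu'$ according to the parity of $m$, and it must be reconciled with the action of $\oslash$ on the $\Q(S_m \wr S_n)$-module structure as defined in \S\ref{subsec:oslash}. A secondary technical point is that Theorem~\ref{thm:mainMaximal} is a statement about \emph{summands} (multiplicity-free direction is not claimed for the extremes in the same breath), so I should confirm that the sign twist is an exact functor on $\Q S_{mn}$-modules preserving the property of being a direct summand — which is immediate since it is an autoequivalence of the module category — so no information is lost in either direction. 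Once these are in place, the rest is the order-reversing-bijection bookkeeping sketched above, and I would present it as a short formal deduction rather than a separate combinatorial argument.
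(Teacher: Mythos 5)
Your reduction is formally sound as an equivalence, but as a proof of Theorem~\ref{thm:main} it is circular in the context of this paper. Theorem~\ref{thm:mainMaximal} is not an independent prior result here: it is itself deduced \emph{from} Theorem~\ref{thm:main} in \S\ref{sec:proof} by exactly the sign-twist argument you describe (the computation of $\sgn_{S_{mn}}$ restricted to $S_m \wr S_n$, the isomorphisms $H_\mu^\nu \otimes \sgn_{S_{mn}} \cong H_{\mu'}^{\nu}$ for $m$ even and $\cong H_{\mu'}^{\nu'}$ for $m$ odd, and the fact that conjugation reverses the dominance order while type is the conjugate of weight). So what you have written establishes that the two theorems are equivalent --- which the authors also note, running the implication in the opposite direction --- but it supplies a proof of neither.

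The missing content is the substance of \S\ref{sec:models}--\S\ref{sec:toTuple}, where the paper proves Theorem~\ref{thm:main} directly in two halves. For one direction, given a conjugate-semistandard tableau family tuple of shape $\mu^\kappa$ and type $\lambda$, one constructs an explicit non-zero homomorphism $\widetilde{M}^\lambda \rightarrow H_\mu^\nu$ (Proposition~\ref{prop:nonzero}, via the models of \S\ref{sec:models} and the homomorphisms $\overline{f}_\mathcal{T}$, $\overline{g}_\mathcal{T}$); combined with Lemma~\ref{lemma:Mtilde} this yields a summand $S^{\lambda^\dagger}$ with $\lambda \unrhd \lambda^\dagger$. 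For the other, Theorem~\ref{thm:toFamily} shows that any summand $S^\lambda$ of $H_\mu^\nu$ forces the existence of a tuple of shape $\mu^\kappa$ and type $\lambda$, using the bases $\mathcal{B}_\row$, $\mathcal{B}_\col$ and the Pieri-type Proposition~\ref{prop:Pieri}. Matching minimal types with minimal summands then completes the argument. None of this can be recovered from the sign twist alone; to salvage your approach you would first need an independent proof of Theorem~\ref{thm:mainMaximal}, which would require essentially the same machinery applied to $\mu'$ in place of $\mu$.
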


We show in \S\ref{sec:proof} that
Theorem~\ref{thm:mainMaximal} follows easily from Theorem~\ref{thm:main} by tensoring
with the sign representation of $S_{mn}$. We therefore focus
on the proof of Theorem~\ref{thm:main}.
Throughout this article the partitions
$\lambda$, $\mu$, $\nu$ and $\kappa$ have the meanings 
in Theorems~\ref{thm:mainMaximal} and~\ref{thm:main}. 

The special cases of Theorem~\ref{thm:main} when $\mu = (m)$ or $\mu = (1^m)$ were 
proved by the authors in \cite{PagetWildonTwisted}; in these cases a conjugate-semistandard
tableau family of shape $\mu^d$
is simply a family of~$d$ distinct subsets or multisubsets
(respectively) of $\N$.
It appears to be impossible to make a routine generalization of the proof in \cite{PagetWildonTwisted}, 
and so we have at many points adopted a different approach.
In particular, we do not use Garnir relations to prove that
our homomorphisms from Specht modules to generalized Foulkes modules are
well-defined, instead relying on the combinatorial argument 
given in \S\ref{sec:proof}. 

Our methods also bound the multiplicity of the Specht module
summands given by Theorems~\ref{thm:mainMaximal} and~\ref{thm:main}: the multiplicity
is bounded above in Theorem~\ref{thm:toFamily} and below in Proposition~\ref{prop:linindep}.
In Corollary~\ref{cor:minMult} we show that the multiplicity of the Specht module summand of $H_\mu^\nu$
labelled by the lexicographically least partition may be arbitrarily large;
by contrast, the multiplicity of the Specht module labelled by the lexicographically
greatest partition is always~$1$, by Corollary~\ref{cor:lexMax}.

In the following example, and in some later arguments,
it is useful
to extend the definition of weights and types to conjugate-semistandard tableau families.
We do this
in the obvious way, by regarding such a family as a conjugate-semistandard tableau family of shape
$\mu^{(d)}$.

\begin{example}\label{ex:ex21}
Part of the poset of conjugate-semistandard $(2,1)$-tableaux under
the majorization order $\preceq_\maj$  in Definition~\ref{defn:majorizationTableaux} 
is shown in Figure~1 overleaf.
By Lemma~\ref{lemma:minimalimpliesclosed}, if $(\mathcal{T}_1,\ldots,\mathcal{T}_c)$ 
is a conjugate-semistandard tableau family tuple of minimal type then
each $\mathcal{T}_i$ is downwardly closed under $\preceq_\maj$; that is, if $t \in \mathcal{T}_i$
and $s \preceq_\maj t$, then $s \in \mathcal{T}_i$. 
The five closed conjugate-semistandard tableau families of shape $(2,1)^4$ 
 are shown in Figure~2. The first four  have minimal type; note the third and fourth share the same minimal type.
Therefore, by Theorem~\ref{thm:main}, the minimal partitions $\lambda$ 
such that 
$S^\lambda$ is a summand of the generalized Foulkes module $\smash{H_{(2,1)}^{(4)}}$
are $(5,1^7)$, $(4,2,2,1^4)$ and $(3,3,2,2,1,1)$. Proposition~\ref{prop:linindep} implies
that the multiplicity of $S^{(3,3,2,2,1,1)}$ is~$2$. 
Since $(8,4)$ is the
weight of the unique closed conjugate-semistandard tableau family tuple of shape $(2,1)^{(1,1,1,1)}$,
it follows from Theorem~\ref{thm:mainMaximal} that
the unique maximal partition~$\lambda$ such that~$S^\lambda$ is a summand
of $H_{(2,1)}^{(4)}$ is $(8,4)$. 

We leave it as an exercise to the reader to use Theorems~\ref{thm:mainMaximal} and~\ref{thm:main} 
together with Figure~1 and Lemma~\ref{lemma:minimalimpliesclosed}
to show that the minimal partitions~$\lambda$ such that $S^\lambda$ is a summand of $H_{(2,1)}^{(3,1)}$ 
are $(4,2,1^6)$ and $(3,2,2,2,1^3)$, and the maximal partitions are $(8,3,1)$ and $(7,5)$.
\end{example}

\begin{figure}[h]
\begin{tikzpicture}[scale=1, x=1.25cm, y=1.5cm]

\draw (0.4,0.4)--(0.6,0.6);
\draw[xshift=1.25cm,yshift=1.5cm] (0.4,0.4)--(0.6,0.6);
\draw[xshift=2.5cm,yshift=3cm] (0.4,0.4)--(0.6,0.6);
\draw[xshift=-1.25cm,yshift=1.5cm] (0.4,0.4)--(0.6,0.6);
\draw[xshift=0cm,yshift=3cm] (0.4,0.4)--(0.6,0.6);
\draw[xshift=-2.5cm,yshift=3cm] (0.4,0.4)--(0.6,0.6);

\draw (-0.4,0.4)--(-0.8,0.8);
\draw[xshift=1.25cm,yshift=1.5cm] (-0.4,0.4)--(-0.8,0.8);
\draw[xshift=2.5cm,yshift=3cm]  (-0.4,0.4)--(-0.8,0.8);
\draw[xshift=-1.25cm,yshift=1.5cm]  (-0.4,0.4)--(-0.8,0.8);
\draw[xshift=0cm,yshift=3cm]  (-0.4,0.4)--(-0.8,0.8);
\draw[xshift=-2.5cm,yshift=3cm]  (-0.4,0.4)--(-0.8,0.8);

\draw (0,2.4)--(0,2.6);

\node at (0,0) [] {$\young(12,1)$};
\node at (1,1) [] {$\young(13,1)$};
\node at (-1,1) [] {$\young(12,2)$};
\node at (-2,2) [] {$\young(12,3)$};
\node at (0,2) [] {$\young(13,2)$};
\node at (2,2) [] {$\young(14,1)$};
\node at (-3,3) [] {$\young(12,4)$};
\node at (-1,3) [] {$\young(13,3)$};
\node at (0,3) [] {$\young(23,2)$};
\node at (1,3) [] {$\young(14,2)$};
\node at (3,3) [] {$\young(15,1)$};
\end{tikzpicture}
\caption{Part of the poset of conjugate-semistandard $(2,1)$-tableaux under the majorization order.}
\end{figure}

\begin{figure}[h]
\begin{center}
\begin{tabular}{ll} \\ \Xhline{2\arrayrulewidth}
Tableau family & Type\rule[-6pt]{0pt}{19pt} \\ \Xhline{2\arrayrulewidth}
$\Bigl\{\,\young(12,1)\sp\young(13,1)\sp\young(14,1)\sp \young(15,1)\,\Bigr\}$\hs  & $(5,1^7)$ \rule[-15pt]{0pt}{34pt} \\
$\Bigl\{\,\young(12,1)\sp\young(12,2)\sp\young(13,1)\sp\young(14,1)\,\Bigr\}$\hs & $(4,2,2,1^4)$\rule[-15pt]{0pt}{33pt} \\
$\Bigl\{\,\young(12,1)\sp\young(12,2)\sp\young(13,1)\sp \young(13,2)\,\Bigr\}$\hs & $(3,3,2,2,1,1)$\rule[-15pt]{0pt}{33pt} \\
$\Bigl\{\,\young(12,1)\sp\young(12,2)\sp \young(12,3)\sp\young(13,1)\,\Bigr\}$\hs & $(3,3,2,2,1,1)$ \rule[-15pt]{0pt}{33pt}\\
$\Bigl\{\,\young(12,1)\sp\young(12,2)\sp\young(12,3)\sp \young(12,4)\,\Bigr\}$\hs & $(4,2,2,2,2)$ \rule[-15pt]{0pt}{33pt} \\
\Xhline{2\arrayrulewidth}
\end{tabular}
\end{center}
\caption{The closed conjugate-semistandard tableau families of shape $(2,1)^4$.}
\end{figure}


\subsection*{Motivation}
D.~E.~Littlewood defined the plethysm of symmetric functions in 1936.  
Since then  progress on the decomposition of the plethysm $s_\nu \circ s_\mu$ as an integral linear combination of Schur functions has been made via symmetric functions, via polynomial representations of general linear groups and via the representation theory of symmetric groups. However, few general results have been found. 
Finding a combinatorial rule for 
the multiplicities $\langle s_\nu \circ s_\mu, s_\lambda\rangle$ that demonstrates their positivity 
was identified as a key open problem in algebraic combinatorics by Stanley in~\cite[Problem 9]{StanleyPositivity}.

One motivation for this problem is a long-standing conjecture of Foulkes
\cite{Foulkes}, which states that if $m \le n$ then 
$s_{(n)} \circ s_{(m)} - s_{(m)} \circ s_{(n)}$ is a non-negative integral linear
combination of Schur functions. Foulkes' Conjecture 
was proved when $m=2$ in \cite{Thrall}, when $m=3$ in \cite{DentSiemons}, when $m=4$
in \cite{McKay}, and recently when $m=5$ in \cite{CheungIkenmeyerMkrtchyan}.
It has also been proved when $n$ is very much larger than $m$ by Brion \cite{Brion}, using
methods from geometric invariant theory. In this connection, we note that
if $E$ is a complex vector
space, then the decomposition
of $s_{(n)} \circ s_\mu$ gives the polynomial representations of $\GL(E)$ appearing
in the $n$th component $\Sym^n \nabla^\mu(E)$ of the coordinate ring $\mathcal{O}(\nabla^\mu(E))$.
Our results apply to these rings;  we find some new invariants for the
special linear group at the end of \S\ref{sec:applications}.

The plethysms $\nabla^\lambda\!\bigwedge^m(E)$ appear in the problem of describing
relations between the $m\times m$ minors of generic matrices \cite[\S 1.5]{BrunsConcaVarbaro}.
Plethysms can also be used to construct polynomial representations of general linear groups: for example,
$\nabla^{(2,2)}(E)$ is the kernel of the canonical surjection from $\Sym^2(\Sym^2(E))$
to $\Sym^{(4)}(E)$; thus, $s_2 \circ s_2 = s_{(2,2)} + s_{(4)}$. This
is generalized and given a geometric interpretation in \cite[\S 14.4]{FultonHarris}.
A further motivation comes from enumerative combinatorics. For example, by \cite[(5.8)]{Read},
the number of $k$-regular graphs on~$n$ vertices (counted up to isomorphism)
is $\langle s_{(n)} \circ s_{(k)}, s_{(nk/2)} \circ s_2 \rangle$. Finally we mention
that a better understanding of plethysm coefficients is essential to Mulmuley and Sohoni's geometric approach
to the VP $\not=$ VNP problem in algebraic complexity theory: see \cite{BurgisserEtAl} for an introduction.

\subsection*{Background}
The existing results on the plethysm of Schur functions fall into three classes: explicit decompositions of 
$s_\nu \circ s_\mu$ for particular $\nu$ or $\mu$, theorems on constituents of special forms, and results which relate plethysm coefficients. Most of these results were obtained by symmetric function methods; for uniformity, we use this language throughout our survey.

Explicit decompositions of the plethysms $s_\nu \circ s_{(m)}$ are known when $\nu$ is a partition of
$n \le 4$; see Littlewood \cite{LittlewoodCharacters} for $n=2$, Thrall \cite[Theorem~5]{Thrall} or Dent and Siemons \cite[Theorem~4.1]{DentSiemons} for $\nu=(3)$, and Foulkes \cite[Theorem 27]{FoulkesPlethysm} and Howe \cite[Section 3.5 and Remark 3.6(b)]{Howe} for the remaining cases.  For sufficiently small partitions $\nu$ and $\mu$, the plethysm $s_\nu \circ s_{\mu}$  can readily be calculated using any of the computer algebra systems {\sc Magma} \cite{Magma}, {\sc Gap}~\cite{GAP4} or {\sc Symmetrica} \cite{Symmetrica}. (There are 
also many hand calculations in early papers: see for example \cite[pages 383, 388]{Thrall}.)
A new recurrence satisfied by the multiplicities
$\langle s_{(n)} \circ s_{(m)}, s_\lambda \rangle$ was given in \cite[Proposition 5.1]{EvseevPagetWildon} and used to verify Foulkes' Conjecture 
for $m+n \le 19$, extending an earlier computation in \cite{MuellerNeunhoffer} for $m+n \le 17$.
This recurrence was generalized to the plethysms $s_\nu \circ s_{(m)}$ in 
\cite[Theorem 6.2.6]{deBoeckThesis}. The plethysms $s_2 \circ s_\mu$ and $s_{(1^2)} \circ s_\mu$,
describing the decomposition of $\nabla^\mu(E) \otimes \nabla^\mu(E)$ into its symmetric and antisymmetric
parts, are determined by \cite[Theorem~4.1]{CarreLeclerc}.

Several other results, like our Theorems~\ref{thm:mainMaximal} and~\ref{thm:main}, give information 
about constituents of a special form.  
By the Cayley--Sylvester formula, the multiplicity of $s_{(mn-d,d)}$ in $s_{(n)} \circ s_{(m)}$ 
is the number of partitions of $d$ whose Young diagram is contained in the Young diagram of $(m^n)$,
minus the number of partitions of $d-1$ satisfying the same restriction.
The multiplicities of the Schur functions 
$s_{(mn-d,1^d)}$, $s_{(mn-d-s,s,1^{d})}$ and $s_{(mn-d-2t,2^t,1^d)}$,
in $s_\nu \circ s_\mu$ were found by Langley and Remmel in \cite{LangleyRemmel}.
Giannelli \cite[Theorem~1.2]{GiannelliArchMath} later found  the multiplicities of a wider
class of  constituents of $s_{(n)} \circ s_{(m)}$ labelled by `near-hook' partitions.

Foulkes' Conjecture gives one conjectural relationship between plethysm coefficients. 
There are further theorems which given a constituent of a plethysm of Schur functions yield constituents of related plethysms, such as Foulkes' Second Conjecture, proved (in a more general form)
by Brion in \cite[\S 2]{Brion}, which
states that 
$\langle s_{(n)} \circ s_{(m)}, s_\lambda \rangle \le \langle s_{(n)} \circ s_{(m+1)}, s_{\lambda + (n)}
\rangle$ for any partition~$\lambda$ of $mn$; here the addition is componentwise, 
as for weights of representations of general
linear groups. This setting may also be used to prove Proposition~4.3.4 in 
\cite{IkenmeyerThesis}, which states that if $\langle s_n \circ s_m, s_\lambda \rangle \ge 1$
and $\langle s_{\tilde{n}} \circ s_m, s_{\tilde{\lambda}} \rangle \ge 1$ then
$\langle s_{n + \tilde{n}} \circ s_m, s_{\lambda + \tilde{\lambda}} \rangle \ge 1$.
Newell proved in \cite{Newell} that 
$\langle s_{(n)} \circ s_{(m)}, s_\lambda \rangle = \langle s_{(1^n)} \circ s_{(m+1)}, s_{\lambda + (1^n)}
\rangle$ and $\langle s_{(1^n)} \circ s_{(m)}, s_\lambda \rangle = \langle s_{(n)} \circ s_{(m+1)},
s_{\lambda+{(1^n)}} \rangle$.
A result of a similar flavour was found by de Boeck in \cite{deBoeckThesis} who showed that 
$\langle s_{(n)} \circ s_{(1^m)}, s_\lambda \rangle \le \langle s_{(n+1)} \circ s_{(1^m)}, s_{\lambda +(1^m)}
\rangle$.
Another relationship between plethysms was proved by 
 Manivel \cite{ManivelCayleySylvester}, who showed  that the specialization
 $(s_{(n^k)} \circ s_{(m+k-1)})(q^{-1},q)$  is symmetric under any permutation of $m$, $n$ and $k$,  
 generalizing  the Cayley--Sylvester formula. Taking $k=1$ and swapping $k$ and~$n$ gives
 $(s_{(n)} \circ s_{(m)})(q^{-1},q) = (s_{(1^n)} \circ s_{(m+n-1)})(q^{-1},q)$; this is a
 combinatorial expression of the Wronskian isomorphism: see for example \cite[\S 2.5]{AbdesselamChipalkatti}.

\subsection*{Outline}\label{subsec:outline} 
In \S 2 we set out our notation and prove some preliminary results.
In \S\ref{sec:models} we define explicit models
for the modules $H_\mu^\nu$ using
$\nu$-tableaux whose entries are certain $\mu$-tableaux. We believe these models
are of independent interest and will be of use beyond their applications in this paper. 
In \S\ref{sec:homs} we define,
for each  conjugate-semistandard tableau family tuple of 
shape $\mu^\kappa$ and type $\lambda$,
a homomorphism from $\widetilde{M}^\lambda$ into our model for $H_\mu^\nu$,
where $\widetilde{M}^\lambda$ is the twisted Young permutation module defined in \S\ref{subsec:Specht}
below.
In \S\ref{sec:nonzero} we show that these homomorphisms are non-zero. In \S\ref{sec:toTuple}
we show that given a non-zero homomorphism
$S^\lambda \rightarrow H_\mu^\nu$, 
there is a conjugate-semistandard tableau family tuple of shape $\mu^\kappa$.
These two sections contain the main work in the proof, and may be read independently of each other.
In \S\ref{sec:proof} we combine them to prove Theorem~\ref{thm:main};
we then deduce Theorem~\ref{thm:mainMaximal} by an easy sign twist.

The outline
of the proof of Theorem~\ref{thm:main} is as follows: by Proposition~\ref{prop:nonzero}, if there is a 
conjugate-semistandard tableau family tuple of type $\lambda$ and shape~$\mu^\kappa$,
then there is a non-zero homomorphism $\widetilde{M}^\lambda \rightarrow H_\mu^\nu$.
 By Lemma~\ref{lemma:Mtilde}, it follows that there is a summand $S^{\lambda^\dagger}$
of $H_\mu^\nu$ for some $\lambda^\dagger \unlhd \lambda$. (Here $\unlhd$ denotes the dominance order on partitions.) On the other hand,
by Theorem~\ref{thm:toFamily}, given any such summand, there is a conjugate-semistandard
tableau family tuple of the corresponding type. Hence tuples of minimal type
correspond to the summands of $H_\mu^\nu$ labelled by minimal partitions.

In \S\ref{sec:indep} we give a sufficient condition for the homomorphisms
defined with respect to distinct conjugate-semistandard tableau family tuples of the same type
to be linearly independent. Example~\ref{ex:big} gives an indication of the more complicated
behaviour when this condition does not hold.   We end in \S\ref{sec:applications}
with applications of this result and our
two main theorems  to the conjectures of Agaoka mentioned earlier.
In addition, we characterize all generalized Foulkes modules having a unique
minimal Specht module summand in the dominance order.

\section{Preliminary definitions and results}
\label{sec:preliminaries}

\subsection{Young diagrams}
We define the \emph{Young diagram} of a partition $\gamma$ with
exactly $k$ parts
by $[\gamma] = \{(i,j) : 1 \le i \le k, 1 \le j \le \gamma_i \}$. We
refer to the elements of $[\gamma]$ as \emph{boxes} and draw Young diagrams
in the `English' convention, where the largest part appears at the top.

\subsection{Tableaux}\label{subsec:tableau}

Let $\gamma$ be a partition and let $\Omega$ be a set. A \emph{$\gamma$-tableau $t$ 
with entries from $\Omega$}
is a function $t : [\gamma] \rightarrow \Omega$. 
If $(i,j) \in [\gamma]$ and $t$ is a tableau with entries from $\Omega$ 
then the \emph{entry in position} $(i,j)$ of $t$
is $(i,j)t \in \Omega$. 
If $t$ is injective then we say the
entries are \emph{distinct}. 
The \emph{conjugate} of a $\gamma$-tableau~$t$ 
is the $\gamma'$-tableau defined by $(i,j)t' = (j,i)t$.
The symmetric group $S_\Omega$ acts on the set of $\gamma$-tableaux with entries from $\Omega$:
if $\phi\in S_\Omega$ and $t$ is such a tableau,
then $t \phi$ is the $\gamma$-tableau defined by $(i,j)(t \phi) = \bigl( (i,j)t\bigr) \phi$. 

We also need a place permutation action: if $\sigma \in S_{[\gamma]}$, 
the symmetric group on the boxes of the Young diagram $[\gamma]$,
 then $t \pp \sigma$
is the $\gamma$-tableau defined by $(i,j)(t \pp \sigma) = \bigl( (i,j) \sigma^{-1} \bigr) t$.
Thus if $t$ has entry $\alpha$ in position $(i,j)$ then $t \pp \sigma$ has entry $\alpha$ in
position $(i,j)\sigma$. We define the groups of \emph{row place permutations of $\gamma$}
and \emph{column place permutations of~$\gamma$}
to be the Young subgroups $\RPP(\gamma)$ and $\CPP(\gamma)$ 
of $S_{[\gamma]}$
having as their orbits the rows of $[\gamma]$
and the columns of $[\gamma]$, respectively.
Given a $\gamma$-tableau $t$ with entries from $\Omega$, 
we define the \emph{tabloid} $\{t\}$ and \emph{columnar tabloid} $||t||$ 
to be the equivalence classes of $t$
up to the action of $\RPP(\gamma)$ and $\CPP(\gamma)$, respectively.
Thus $\{t\}$ and $||t||$ are determined by
the multisets of entries in the rows and columns of $t$, respectively.

We represent $\gamma$-tableaux with entries from $\Omega$ by
filling the boxes of $[\gamma]$ with elements of~$\Omega$; if $t$ is a $\gamma$-tableau
then we represent the corresponding tabloid~$\{t\}$ by removing vertical
lines from $t$ and the corresponding columnar tabloid~$||t||$ by removing horizontal
lines from $t$.

For example, if $t = \young(12,13)$ then
$\{t\} = \yngt(12,13)\hskip1pt$ and $||t|| = \youngcolumntabloid(12,13)\hskip1pt$;
here $\gamma=(2,2)$ and $\Omega$ may be taken to be $\N$. 

\subsection{Specht and dual Specht modules}
\label{subsec:Specht}

Fix $r \in \N$, a partition $\gamma$ of~$r$ and a set $\Omega$ of size $r$.
Throughout this subsection all tableaux, and so all tabloids and columnar tabloids,
have distinct entries from $\Omega$.

The action of $S_\Omega$ on $\gamma$-tableaux with entries from $\Omega$ 
commutes with the place permutation
action of $S_{[\gamma]}$. 
Thus setting $\{t\}\phi = \{t \phi\}$ 
for $\phi \in S_\Omega$ and~$\{t\}$ a $\gamma$-tabloid
gives a well-defined action of $S_\Omega$ on the set of $\gamma$-tabloids.
Let $M^\gamma$ denote the \emph{Young permutation
module}  for $\Q S_\Omega$ spanned by all $\gamma$-tabloids.
Let $\widetilde{M}^\gamma = M^{\gamma'} \otimes \langle v\rangle$ where $\langle v \rangle$
affords the sign representation of $S_\Omega$. 
Fix a $\gamma$-tableau $t_\gamma$ with entries from $\Omega$, so that $\{t_\gamma\} \otimes v$ generates
$\widetilde{M}^\gamma$. We write $|t_\gamma|=\{t_\gamma'\} \otimes v$ and $|t_\gamma \phi|=(\{t_\gamma'\} \otimes v)\phi \in \widetilde{M}^\gamma$ for $\phi \in S_\Omega$.
%
Thus if $t$ is a $\gamma$-tableau and $\tau \in \CPP(\gamma)$ then  $|t \pp \tau | = |t|\sgn(\tau)$;
  up to a sign, $|t|$ is determined by the corresponding
 columnar tabloid $||t||$.
 We note that our definition of $\widetilde{M}^\gamma$ 
agrees with Fulton's in \cite[Chapter~7]{FultonYT}.

Given a $\gamma$-tableau $t$, let $e(t) \in M^\gamma$ be the corresponding 
\emph{polytabloid}
defined by $e(t) = \sum_{\tau \in \CPP(\gamma)} \{t \pp \tau \} \sgn(\tau)$
and let $\widetilde{e}(t) \in \widetilde{M}^\gamma$ be the corresponding 
\emph{dual polytabloid}
defined by $\widetilde{e}(t) = \sum_{\sigma \in \RPP(\gamma)} |t \pp \sigma|$.
Let $S^\gamma \subseteq M^\gamma$ be the \emph{Specht module}
spanned by all 
polytabloids $e(t)$ for $t$ a $\gamma$-tableau.
Let $\smash{\widetilde{S}^\gamma \subseteq \widetilde{M}^\gamma}$ be the \emph{dual Specht module}
spanned by all 
dual polytabloids $\widetilde{e}(t)$ for $t$ a $\gamma$-tableau. 
Since the action of $S_\Omega$ commutes with the place permutation action of $\CPP(\gamma)$, we have
\begin{equation}\label{eq:SpechtCyclic} e(t) \phi = e(t\phi) \end{equation}
for  $\phi \in S_\Omega$. Hence the Specht module
$S^\gamma$ is cyclic, generated by any 
polytabloid. (An analogous result
holds for $\widetilde{S}^\gamma$.)

%

There are canonical $\Q S_\Omega$-homomorphisms $\widetilde{M}^\gamma \rightarrow S^\gamma$
and $M^\gamma \rightarrow \widetilde{S}^\gamma$ defined by $|t| \mapsto e(t)$ and
$\{t\} \mapsto \widetilde{e}(t)$, respectively. By \cite[\S 7.4, Lemma 5]{FultonYT},
the composition $\rule{0pt}{11pt}S^\gamma \rightarrow M^\gamma \rightarrow \widetilde{S}^\gamma$ is
an isomorphism.
By \cite[Theorems 4.12 and 6.7]{James}, Specht and dual Specht modules
are irreducible and each irreducible module for $\Q S_\Omega$
is isomorphic to a unique Specht module $S^\delta$ for $\delta$ a partition of $r$.
Moreover,  
\begin{equation}
\label{eq:sgntwist}
S^\lambda \otimes \sgn \cong \widetilde{S}^{\lambda'} \cong S^{\lambda'}.
\end{equation}

For example, 
the Specht module $S^{(2,2)}$ and the dual Specht module $\widetilde{S}^{(2,2)}$ for
$\mathbb{Q}S_4$ are generated by
\begin{align*}
e(\, \young(12,34) \, ) &= \yngt(12,34) - \yngt(23,14) - \yngt(14,23) + \yngt(34,12)\, ,  \\[3pt]
\widetilde{e}(\, \young(12,34)\, ) &= \youngcolumntabloid(12,34)
+\youngcolumntabloid(21,34) +\youngcolumntabloid(12,43) + \youngcolumntabloid(21,43)\, ,
\end{align*}
respectively. Setting $t = \young(12,34)\,\rule[-11pt]{0pt}{12pt}$, these generators are 
the images of  $|t| \in \widetilde{M}^{(2,2)}$ and $\{t\} \in M^{(2,2)}$, 
under the respective canonical homomorphisms.

As mentioned in the outline, the following  lemma is key to the proof of the main theorem.

\begin{lemma}\label{lemma:Mtilde}

If $S^{\lambda^\dagger}$ is a summand of $\widetilde{M}^\lambda$,
for some partition ${\lambda^\dagger}$ of $r$,
 then $\lambda \unrhd \lambda^\dagger$.
\end{lemma}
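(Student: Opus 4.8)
The plan is to use the dual characterization of Specht module constituents via homomorphisms from Young permutation modules. Recall $\widetilde{M}^\lambda = M^{\lambda'}\otimes\sgn$ by definition, so a summand $S^{\lambda^\dagger}$ of $\widetilde{M}^\lambda$ corresponds, after twisting by $\sgn$, to a summand $S^{\lambda^\dagger}\otimes\sgn\cong S^{(\lambda^\dagger)'}$ of $M^{\lambda'}$. By the classical result of James (see \cite[Theorem 13.13 and Corollary 13.14]{James}), the permutation module $M^{\lambda'}$ has $S^{(\lambda')'}=S^\lambda$ (wait---$M^{\lambda'}$ decomposes with Specht factors $S^\delta$ for $\delta\unrhd\lambda'$), so every Specht module summand $S^\delta$ of $M^{\lambda'}$ satisfies $\delta\unrhd\lambda'$. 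Applying this with $\delta=(\lambda^\dagger)'$ gives $(\lambda^\dagger)'\unrhd\lambda'$, which is equivalent to $\lambda\unrhd\lambda^\dagger$ since conjugation reverses the dominance order. That is the whole argument.

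Concretely, first I would rewrite the hypothesis: if $S^{\lambda^\dagger}$ is a summand of $\widetilde{M}^\lambda$, then tensoring with $\sgn$ (an exact, self-inverse operation that preserves direct sum decompositions) and using \eqref{eq:sgntwist} shows that $S^{(\lambda^\dagger)'}$ is a summand of $\widetilde{M}^\lambda\otimes\sgn = M^{\lambda'}$. Second, I would invoke the standard fact that the only Specht modules appearing as summands (indeed, as composition factors, since over $\Q$ everything is semisimple) of the Young permutation module $M^{\nu}$ are those $S^\delta$ with $\delta\unrhd\nu$; this is immediate from Young's rule, or from the submodule theorem together with the fact that $\Hom_{\Q S_\Omega}(S^\delta, M^\nu)\neq 0$ forces $\delta\unrhd\nu$. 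Taking $\nu=\lambda'$ and $\delta=(\lambda^\dagger)'$ yields $(\lambda^\dagger)'\unrhd\lambda'$. Third and finally, I would apply the order-reversing property of conjugation on the dominance lattice to conclude $\lambda\unrhd\lambda^\dagger$.

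There is really no hard step here: the lemma is a direct consequence of the definition of $\widetilde{M}^\lambda$, the sign-twist identity \eqref{eq:sgntwist}, and the well-known dominance bound on constituents of Young permutation modules. The only point requiring a moment's care is making sure the sign twist is applied on the correct side and that one tracks the conjugations correctly---$\widetilde{M}^\lambda$ is built from $M^{\lambda'}$, so the bound $\delta\unrhd\lambda'$ is on the conjugate side, and one must remember to conjugate back at the end. If one wanted to avoid even mentioning $\sgn$, one could instead observe directly that, since $S^\delta\otimes\sgn\cong S^{\delta'}$ and the Specht factors of $M^{\lambda'}$ are the $S^\delta$ with $\delta\unrhd\lambda'$, the Specht factors of $\widetilde{M}^\lambda=M^{\lambda'}\otimes\sgn$ are exactly the $S^{\delta'}$ with $\delta\unrhd\lambda'$, i.e.\ the $S^{\lambda^\dagger}$ with $\lambda^\dagger\unlhd\lambda$; this is the same proof phrased without an intermediate step.
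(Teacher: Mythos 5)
Your argument is correct and is essentially the paper's own proof: twist by the sign representation using \eqref{eq:sgntwist} to turn the question about $\widetilde{M}^\lambda$ into one about $M^{\lambda'}$, apply Young's rule to get $(\lambda^\dagger)'\unrhd\lambda'$, and conjugate back. The extra detail you supply (semisimplicity over $\Q$, the order-reversal of conjugation) is all sound.
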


\begin{proof}
Multiplying by the sign representation using~\eqref{eq:sgntwist}, this follows from Young's rule (see, for instance
\cite[Corollary 1, page 92]{FultonYT}) or \cite[Proposition~7.18.7]{StanleyII}).
\end{proof}

\subsection{Symbols}\label{subsec:symbols}
Let $\lambda$ be a partition.
We define the \emph{symbol} of a box $(x,y) \in [\lambda]$ to be the formal
symbol~$y_x$. We say that $y$ is the \emph{number} and $x$ is the \emph{index} of $y_x$.
Let $\Omega^\lambda$ be the set of all symbols $y_x$ for $(x,y) \in [\lambda]$.
Let $I(\lambda) \le S_{\Omega^\lambda}$ be the Young subgroup of $S_{\Omega^\lambda}$
having as its orbits the sets $\{y_x : 1 \le x \le \lambda_y'\}$ for each $y \in \{1,\ldots,\lambda_1\}$.
We order $\Omega^\lambda$ by first comparing on numbers,
then on indices: thus $y'_{x'} < y_x$ if and only if $y' < y$ or $y'=y$ and $x' < x$.
Let $t_\lambda$ be the $\lambda$-tableau with entries from~$\Omega^\lambda$ defined by
$(x,y)t_\lambda = y_x$. Since $I(\lambda)$ permutes entries within the columns of $t_\lambda$,
we have $e(t_\lambda) = \{t \} b_\lambda$ where
$b_\lambda = \sum_{\theta \in I(\lambda)} \theta \sgn(\theta)$.

\subsection{Total order on conjugate-semistandard tableaux}
\label{subsec:totalOrder}

Let $\Omega$ be totally ordered by $<$ and let
$A = \{\alpha_1,\ldots, \alpha_d\}$ and $B = \{\beta_1,\ldots, \beta_d\}$ be multisubsets of $\Omega$
such that $\alpha_1 \le \ldots \le \alpha_d$ and $\beta_1 \le \ldots \le \beta_d$. 
The \emph{colexicographic order} on multisubsets of~$\Omega$
is defined by $A < B$ if and only
if for some~$q$ we have $\alpha_q < \beta_q$ and $\alpha_{q+1} = \beta_{q+1}, \ldots, \alpha_d = \beta_d$.
It is a total order.

\begin{definition}\label{defn:colorder}
Let $u$ and $v$ be distinct conjugate-semistandard $\mu$-tableaux with entries in $\N$.
Let column $j$ be the rightmost column in which $u$ and $v$ differ,
and let $A$ and~$B$ be the multisets of entries in column $j$ of $u$ and $v$, respectively.
We set $u < v$ if and only if $A < B$ in the colexicographic order on multisubsets of~$\N$.
\end{definition}


\subsection{Pre-orders on tableaux}

It will be useful to compare tableaux under relations that are reflexive and 
transitive, but fail (in general) to be antisymmetric. Such relations are 
called pre-orders.
Let $\preceq$ be a pre-order on a set $\Omega$. Thus $\alpha \preceq \alpha$ 
and if $\alpha \preceq \beta$ and $\beta \preceq 
\gamma$ then $\alpha \preceq \gamma$ for all $\alpha ,\beta,\gamma \in \Omega$. 
There may exist 
distinct $\alpha$ and $\beta$ such that $\alpha \preceq \beta$ and 
$\beta \preceq \alpha$.  
We write $\alpha \prec \beta$ to mean that $\alpha\preceq \beta$ and $\beta\npreceq \alpha$. 
A key property we need is stated in the following lemma.

\begin{lemma}\label{lemma:alphabeta}
Let $\preceq$ be a pre-order on a set $\Omega$ and let $\alpha_1, \ldots, \alpha_d$,
$\beta_1,\ldots,\beta_d \in \Omega$. If $\alpha_i \preceq \beta_i$ for each $i$,
and there exists a permutation $\phi \in S_d$ such that 
$\beta_{i\phi} \preceq \alpha_i$ for each $i$,
then $\beta_i \preceq \alpha_i$ for each $i$.
\end{lemma}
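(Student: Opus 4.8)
The plan is to exploit the finiteness of the multiset $\{\alpha_1,\ldots,\alpha_d\}$ together with the transitivity of $\preceq$, using the permutation $\phi$ to build chains that must eventually close up into cycles. First I would fix an index $i$ and trace the orbit of $i$ under $\phi$, say $i, i\phi, i\phi^2, \ldots, i\phi^{k-1}$ with $i\phi^k = i$ where $k$ is the length of the cycle of $\phi$ containing $i$. Along this orbit I would chain together the two families of inequalities: from $\alpha_j \preceq \beta_j$ for every $j$ and $\beta_{j\phi} \preceq \alpha_j$ for every $j$, I get for each consecutive pair $\alpha_{i\phi^{r+1}} \preceq \beta_{i\phi^{r+1}} \preceq \alpha_{i\phi^r}$, and composing around the whole cycle yields $\alpha_i \preceq \alpha_{i\phi^{k-1}} \preceq \cdots \preceq \alpha_{i\phi} \preceq \alpha_i$, so all the $\alpha_{i\phi^r}$ are equivalent to one another under $\preceq$ (each $\preceq$ the next and the cycle closes).

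Once I know that $\alpha_j \preceq \alpha_{j\phi}$ for every $j$ in the orbit (and symmetrically $\alpha_{j\phi}\preceq\alpha_j$), I can insert the $\beta$'s back in: for each $j$ in the orbit, $\alpha_j \preceq \beta_j$ is given, and $\beta_j = \beta_{(j\phi^{-1})\phi} \preceq \alpha_{j\phi^{-1}} \preceq \alpha_j$, where the last step uses the equivalence of all $\alpha$'s in the orbit just established. Hence $\beta_j \preceq \alpha_j$ for every $j$ lying in a cycle of $\phi$ — but every index lies in some cycle of $\phi$, since $\phi \in S_d$, so this covers all $i \in \{1,\ldots,d\}$. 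That gives exactly the desired conclusion $\beta_i \preceq \alpha_i$ for each $i$.

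The main point to be careful about is purely bookkeeping: since $\preceq$ is only a pre-order, I must not claim any $\alpha_j$ actually equals another, only that they are $\preceq$-equivalent, and I must check that the chain of $\preceq$-relations around the cycle is assembled in a consistent direction. Concretely, I would write out the display
\[
\alpha_i \preceq \beta_i, \qquad \beta_{j\phi} \preceq \alpha_j \ \text{for all } j,
\]
then substitute $j = i\phi^{-1}$ to get $\beta_i \preceq \alpha_{i\phi^{-1}}$, and iterate. There is no real obstacle here beyond indexing the cycle correctly; the lemma is a formal consequence of transitivity once the cyclic structure of $\phi$ is brought into play, so I expect the only delicate step is presenting the chain $\alpha_i \preceq \alpha_{i\phi^{-1}} \preceq \alpha_{i\phi^{-2}} \preceq \cdots \preceq \alpha_i$ cleanly and concluding that each link is therefore an equivalence.
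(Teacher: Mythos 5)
Your proof is correct and uses essentially the same idea as the paper: decompose $\phi$ into cycles and chain the two families of inequalities around each cycle, concluding by transitivity. The paper does this in a single direct chain $\alpha_i \succeq \beta_{i\phi} \succeq \alpha_{i\phi} \succeq \cdots \succeq \beta_{i\phi^r} = \beta_i$, whereas you first establish the $\preceq$-equivalence of all $\alpha_j$ in an orbit and then reinsert the $\beta$'s, but this is only a cosmetic difference.
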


\begin{proof}
Let $i \in \{1,\ldots, d\}$. If $i$ is in a cycle of $\phi$ of length $r$ then
\[ \alpha_i \succeq \beta_{i\phi} \succeq \alpha_{i\phi} \succeq \ldots \succeq \beta_{i \phi^{r-1}}
\succeq \alpha_{i \phi^{r-1}} \succeq \beta_{i \phi^r} = \beta_i. \]
By transitivity, it follows that $\beta_i \preceq \alpha_i$, as required.
\end{proof}


\begin{definition}\label{defn:preorder}
Let $\preceq$ be a pre-order on a set $\Omega$.
Let $u$ and $v$ be  tableaux of the same shape
with entries from $\Omega$. 

\begin{defnlist}
\item We set that $u \preceq_\col v$ if either \emph{(a)}  the multisets of entries of $u$ and $v$ agree in every column, or \emph{(b)}
it is possible to order the entries   in the rightmost column for which the multisets differ  so that $u$ has entries $\alpha_1, \ldots, \alpha_d$
and $v$ has entries $\beta_1,\ldots,\beta_d$ with $\alpha_1 \preceq \beta_1$, \ldots, $\alpha_d \preceq
\beta_d$. 
%
\item We set $u \preceq_\row v$ if $u' \preceq_\col v'$.
\end{defnlist}
\end{definition}

It is easily seen that $\preceq_\col$ and $\preceq_\row$ 
are pre-orders on the set of $\gamma$-tableaux with entries from $\Omega$.
Moreover $\preceq_\col$ and $\preceq_\row$ induce pre-orders on the set of $\gamma$-columnar tabloids
and $\gamma$-tabloids with entries from $\Omega$, respectively.
We reuse the symbols $\preceq_\col$ and $\preceq_\row$ for these induced pre-orders.
(An example is given at the end of this subsection.)

\begin{definition}\label{defn:rowcolstd}
Let $\preceq$ be a pre-order on a set $\Omega$. Let $u$ be a tableau with entries from~$\Omega$.
We say that $u$ is \emph{row-standard} 
if whenever $u$ has a row with entries $\alpha_1, \ldots, \alpha_d$,
read from left-to-right, we have $\alpha_1 \prec \ldots \prec \alpha_d$.
We say that $u$ is 
\emph{column-standard} 
if $u'$ is row-standard. 
\end{definition}

\begin{lemma}\label{lemma:preorder}
Let $\preceq$ be a pre-order on a set $\Omega$ and let $\gamma$ be a partition.
\begin{thmlist}
\item Let $u$ be a row-standard $\gamma$-tableau with entries from $\Omega$.
If $\sigma \in \RPP(\gamma)$ and $\sigma\not=\id$ then $||u \pp \sigma|| \prec_\col ||u||$.
\item Let $v$ be a column-standard $\gamma$-tableau with entries from $\Omega$.
If $\tau \in \CPP(\gamma)$ and $\tau\not=\id$ then $\{ v \pp \tau \} \prec_\row \{v\}$.
\end{thmlist}
\end{lemma}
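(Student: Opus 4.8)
The two parts are conjugate to one another: part (ii) applied to the transposed tableau $v'$, together with the definitions $u\preceq_\row w \iff u'\preceq_\col w'$ and $\CPP(\gamma)$ versus $\RPP(\gamma')$, yields part (i), so I would prove part (i) in detail and obtain part (ii) formally. Fix a row-standard $\gamma$-tableau $u$ and a row place permutation $\sigma\in\RPP(\gamma)$ with $\sigma\neq\id$. Since $\sigma$ preserves each row, it acts independently on the rows; write $\sigma=\sigma_1\sigma_2\cdots$ where $\sigma_i$ permutes the boxes of row $i$. Because $\sigma\neq\id$, some $\sigma_i\neq\id$.

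The key observation is that $u\pp\sigma$ and $u$ have the \emph{same multiset of entries in each row} (place permutations only move entries around within a row), hence in particular $u\pp\sigma$ is again a tableau whose rows match $u$'s rows as multisets. I would first reduce to analysing the effect on columns. Let $j$ be the rightmost column actually moved by $\sigma$, i.e.\ the largest $j$ such that for some box $(i,j)$ the entry of $u\pp\sigma$ in position $(i,j)$ differs from that of $u$; such a $j$ exists since $\sigma\neq\id$ and permuting within a row that has strictly $\prec$-increasing entries genuinely changes the tableau (here row-standardness, giving $\alpha_1\prec\cdots\prec\alpha_d$ with the strictness $\beta\not\preceq\alpha$, is what forbids a nontrivial within-row permutation from fixing the tableau entrywise). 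For every column $j'>j$ the multisets of entries of $u\pp\sigma$ and $u$ agree, so by Definition~\ref{defn:preorder}(i) the comparison $\preceq_\col$ is decided in column $j$. I then need: ordering the entries of column $j$ suitably, the entries $\alpha_1,\ldots,\alpha_d$ of $u\pp\sigma$ and $\beta_1,\ldots,\beta_d$ of $u$ satisfy $\alpha_k\preceq\beta_k$ for all $k$, and moreover $||u\pp\sigma||\neq||u||$ in the induced pre-order so that the relation is strict.

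The heart of the argument is the column-$j$ inequality. For each row $i$, in column $j$ the original entry of $u$ is the $\prec$-maximal among those entries of row $i$ that $\sigma_i$ could have brought into column $j$ from columns $\le j$—more precisely, since $j$ is the rightmost moved column, positions to the right of $j$ already match, so $\sigma_i$ moves into position $(i,j)$ some entry of $u$ from a column $j''\le j$, and row-standardness of $u$ gives that this entry is $\preceq$ the entry originally in $(i,j)$. Collecting this over the rows $i$ meeting column $j$ gives, after pairing the $(i,j)$-entries, $\alpha_i\preceq\beta_i$ for each such $i$; the remaining indices (rows whose length is $<j$, or rows fixed by $\sigma$) contribute equal pairs. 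Hence $||u\pp\sigma||\preceq_\col||u||$. For strictness I would argue that if $||u\pp\sigma||\succeq_\col||u||$ as well, then by Lemma~\ref{lemma:alphabeta} applied to the column-$j$ entries we get $\beta_i\preceq\alpha_i$ for every $i$, whence column $j$ of $u\pp\sigma$ and $u$ have equal multisets—contradicting the choice of $j$. Therefore $||u\pp\sigma||\prec_\col||u||$, proving (i); applying (i) to $u=v'$ and $\sigma=\tau'\in\RPP(\gamma')$ (the transpose of $\tau\in\CPP(\gamma)$) and transposing back gives (ii).

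The main obstacle I anticipate is the bookkeeping in the column-$j$ step: making precise, row by row, that the entry a nontrivial $\sigma_i$ deposits in column $j$ comes from a column $\le j$ and is $\preceq$ the old column-$j$ entry of that row, and then matching these up across rows so that Definition~\ref{defn:preorder}(i)'s ``order the entries so that $\alpha_k\preceq\beta_k$'' is literally satisfied. Row-standardness (the strict $\prec$ chain along each row) is exactly the hypothesis that powers both the $\preceq$ comparison and, via Lemma~\ref{lemma:alphabeta}, the strictness; I expect the clean way to present it is to treat one row at a time and then take the disjoint union of the column-$j$ multiset comparisons.
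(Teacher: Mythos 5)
Your proposal follows the same route as the paper's proof: choose the rightmost column $j$ affected by $\sigma$, use row-standardness to compare the column-$j$ entries of $u \pp \sigma$ and $u$ row by row, and invoke Lemma~\ref{lemma:alphabeta} for strictness, with part (ii) obtained by the same row/column symmetry the paper appeals to. One inference at the end should be repaired: from $\alpha_i \preceq \beta_i$ and $\beta_i \preceq \alpha_i$ for all $i$ you cannot conclude that the column-$j$ multisets are \emph{equal}, since $\preceq$ is only a pre-order and mutually comparable elements need not coincide. The contradiction you want is more direct: for any row $i$ in which $\sigma$ genuinely moves the box in column $j$, the new entry comes from a strictly earlier position of a strictly $\prec$-increasing row, so $\alpha_i \prec \beta_i$, that is $\beta_i \not\preceq \alpha_i$, which contradicts the conclusion $\beta_i \preceq \alpha_i$ of Lemma~\ref{lemma:alphabeta} outright (and the same observation shows the column-$j$ multisets do differ, so the $\preceq_\col$ comparison really is decided there).
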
 

\begin{proof}
For (i), let $j$ be maximal such that there exists $i$ with $(i,j)\sigma \not= (i,j)$. 
Let $R = \{i : (i,j)\sigma\not= (i,j)\}$ and let $i \in R$.
Since $\sigma\in \RPP(\gamma)$
we have $(i,j)(u \pp \sigma) = (i,j')u$ for some $j' < j$. Since $u$ is row-standard 
$(i,j')u \prec (i,j)u$. Therefore $(i,j)(u \pp \sigma) \prec (i,j)u$ for each $i \in R$
and $(i,j)(u\pp \sigma) = (i,j)u$ for each $i\not\in R$. It now follows from the definition
of $\prec_{\col}$ (comparing on column $j$) and Lemma~\ref{lemma:alphabeta}
that $||u \pp \sigma|| \prec_\col ||u||$.
The proof of (ii) is similar.
\end{proof}

The orders defined in Definition~\ref{defn:preorder} are
 useful even when $\preceq$ is a total order. For example,
if $\Omega = \N$ and $\preceq$ is the usual total order on $\N$,
then the pre-order
$\preceq_\col$ on tableaux of shape $(2,2)$ satisfies $||s|| \preceq_\col ||t|| \prec_\col ||u||$ and
$||t|| \preceq_\col ||s||$, where
\[ s = \young(12,21)\,, \quad t = \young(21,12)\,, \quad u = \young(12,12)\, .\]
We use this pre-order in Lemma~\ref{lemma:RPP} and the proof of Proposition~\ref{prop:nonzero};
we also use Lemma~\ref{lemma:preorder} in a case where $\preceq$ is itself a pre-order in this proof.

\subsection{Modules for wreath products}
\label{subsec:oslash}

Let $H$ be a finite group and let $n \in \N$. Let $W$ be a $\Q H$-module
and let $Z$ be a $\Q S_n$-module.
The outer tensor product $W^{\otimes n}$ is a module for $\Q H^n$.
The symmetric group $S_n$ acts on this module 
by $(w_1\otimes \cdots \otimes w_n)\sigma =
w_{1\sigma^{-1}} \otimes \cdots \otimes w_{n\sigma^{-1}}$ for $\sigma \in S_n$. 
Combining the actions of $H^n$ and $S_n$ on $W^{\otimes n}$
we obtain a $\Q(H \wr S_n)$-module $X$, as constructed in \S 4.3 of \cite{JK}.
Let $\Inf_{S_n}^{S_m \wr S_n} Z$ denote the $\Q(S_m \wr S_n)$-module inflated from $Z$
using the canonical epimorphism $S_m \wr S_n \rightarrow S_n$.
We define
$W \oslash Z$ to be the $\Q(H \wr S_n)$-module $X \otimes \Inf_{S_n}^{S_m \wr S_n} Z$.
(The symbol $\oslash$ was introduced, in a more general context, in \cite{ChuangTan}.)

Importantly this construction is functorial in both $W$ and $Z$.
Thus given a $\Q H$-homomorphism $f : W \rightarrow W'$ and a $\Q S_n$-homomorphism
$g : Z \rightarrow Z'$ there is a corresponding $\Q (H \wr S_n)$-homomorphism $f \oslash g : 
W \oslash Z \mapsto W' \oslash Z'$, defined by
$\bigl((w_1 \otimes \cdots \otimes w_n) \otimes z\bigr) (f \oslash g) = 
(w_1 f \otimes \cdots \otimes w_n f) \otimes z g$.



\subsection{Closed conjugate-semistandard tableau families}
While not logically essential to the proof of Theorem~\ref{thm:main}, 
in practice it is very useful to know that 
a conjugate-semistandard tableau family tuple of minimal type satisfies the
closure property used in Example~\ref{ex:ex21}. 

\begin{definition}\label{defn:majorizationSets}
Let $A$ and $B$ be finite subsets of a totally ordered set such that
$A = \{\alpha_1,\ldots, \alpha_d\}$ and $B = \{\beta_1,\ldots,\beta_d\}$, where $\alpha_1 < \ldots < \alpha_d$ and
$\beta_1 < \ldots < \beta_d$. If $\alpha_i \le \beta_i$ for each~$i$ then we say that
$B$ \emph{majorizes} $A$.
\end{definition}

\begin{definition}\label{defn:majorizationTableaux}
Let $u$ and $v$ be conjugate-semistandard $\mu$-tableaux with entries in $\N$.
We say that $v$ \emph{majorizes} $u$, and write $u \preceq_\maj v$, if
row $i$ of $v$ majorizes row $i$ of $u$ for each $i$.
\end{definition}

\begin{definition}{\ }\label{defn:closed}
\begin{defnlist}
\item A conjugate-semistandard tableau family $\mathcal{T}$ is \emph{closed}
if, whenever $v\in \mathcal{T}$ and $u$ is a conjugate-semistandard tableau
 such that $u \preceq_\maj v$, we have $u \in \mathcal{T}$.

\item A conjugate-semistandard tableau family tuple $(\mathcal{T}_1,\ldots, \mathcal{T}_c)$
is \emph{closed} if $\mathcal{T}_i$ is closed for each $i$.
\end{defnlist}
\end{definition}

From the
part of the poset of conjugate-semistandard $(2,1)$-tableaux shown in Figure~1, 
one can read off
the five closed conjugate-semistandard tableau families of shape $(2,1)^4$ in Figure~2. 
The first four families listed are of minimal type and the final family is of non-minimal type $(4,2^4)$.
Each of the five closed conjugate-semistandard tableau families has a well-defined
type. This is true in general. 

\begin{proposition}\label{prop:type}
If $\mathcal{T}$ is a closed conjugate-semistandard tableau family then
$\mathcal{T}$ has a well-defined type.
\end{proposition}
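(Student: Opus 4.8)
The plan is to show that the weight of a closed conjugate-semistandard tableau family $\mathcal{T}$ is always a partition, so that its type $\gamma'$ is well-defined. Recall that the weight $(\gamma_1,\ldots,\gamma_M)$ records, for each $j$, the number of occurrences of $j$ among all the tableaux in $\mathcal{T}$; it suffices to prove $\gamma_j \ge \gamma_{j+1}$ for each $j < M$.

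First I would set up an injection that witnesses this inequality. Fix $j$ and let $v \in \mathcal{T}$ be a tableau containing at least one entry equal to $j+1$. The key local move is: in $v$, replace the topmost (equivalently, leftmost-in-its-row, using conjugate-semistandardness) occurrence of $j+1$ in some chosen row by $j$, obtaining a tableau $u$; I would need to check that $u$ is still conjugate-semistandard (rows strictly increasing: this can fail if $j$ already appears in that row, so the replacement must be made in a row where $j$ is absent but $j+1$ is present — such a row exists because of the column weak-increase and row strict-increase conditions, which force the occurrences of $j$ and $j+1$ to sit in predictable positions) and that $u \preceq_\maj v$ (immediate, since we decreased one entry). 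Closure of $\mathcal{T}$ then gives $u \in \mathcal{T}$.

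The substance is to organize these moves into a genuine injection from "slots holding $j+1$" to "slots holding $j$" across the whole family, counted with multiplicity, so that $\gamma_j \ge \gamma_{j+1}$. I would argue tableau-by-tableau: within a single conjugate-semistandard $\mu$-tableau $t$, the number of $j$'s is already at least the number of $(j+1)$'s is \emph{false} in general, so the count genuinely uses closure — the extra tableaux obtained by the local moves above supply the missing $j$'s. Concretely, for each $v\in\mathcal{T}$ and each occurrence of $j+1$ in $v$ lying in a row without a $j$, the modified tableau $u$ lies in $\mathcal{T}$ by closure and has strictly more $j$'s; I would set up a bijective correspondence between the multiset of $(j+1)$-slots in $\mathcal{T}$ and a submultiset of the $j$-slots in $\mathcal{T}$, tracking carefully that distinct $(j+1)$-occurrences (in possibly the same tableau) map to distinct $j$-occurrences and that no $j$-slot is used twice. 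The bookkeeping that this assignment is injective is where the argument must be done with care, using that $\preceq_\maj$ compares rows independently and that conjugate-semistandardness pins down exactly which boxes in a row can contain $j$ or $j+1$.

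The main obstacle I expect is precisely this injectivity/counting step: showing that the closure-generated tableaux account for \emph{enough} extra $j$'s without double-counting, i.e. producing a clean combinatorial bijection rather than a mere inequality of the wrong direction tableau-by-tableau. One clean way to sidestep delicate slot-chasing would be to phrase it via a "swap" operator on the whole family — for a fixed $j$, map $\mathcal{T}$ to the family $\mathcal{T}^{(j)}$ obtained by applying the $j{+}1\mapsto j$ move to every tableau that admits it in a canonical row, argue $\mathcal{T}^{(j)}\subseteq\mathcal{T}$ by closure, and compare weights; if $\gamma_{j+1}>\gamma_j$ this would force $\mathcal{T}^{(j)}$ to have strictly more elements than $\mathcal{T}$, contradicting $\mathcal{T}^{(j)}\subseteq\mathcal{T}$. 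Verifying that the canonical choice of row makes this map injective on $\mathcal{T}$ (so that $|\mathcal{T}^{(j)}|\ge$ the number of tableaux containing a $j+1$) is then the crux, and is the step I would spend the most effort on.
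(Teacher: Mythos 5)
You should first be aware that the paper does not actually contain a proof of this proposition: the authors state only that they have a proof ``by a variation on the Bender--Knuth involution'' and omit it because it is not short and the result is not logically essential to the main theorems. Your reduction is the correct one --- ``well-defined type'' means exactly that the weight $\gamma$ satisfies $\gamma_j \ge \gamma_{j+1}$ for all $j$ --- and your overall strategy (a $j{+}1 \mapsto j$ swap combined with closure) is in the same spirit as the Bender--Knuth route the paper alludes to. So there is no detailed argument to compare against, but the difficulty you flag is precisely the one that makes the real proof long.

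That difficulty is a genuine gap in your proposal, and the concrete fallback you offer does not close it: the ``canonical topmost row'' map is not injective. Take $\mu = (2,2)$ and $j = 2$, and consider
\[ t_1 = \young(13,24)\,, \qquad t_2 = \young(12,34)\,, \qquad u = \young(12,24)\,. \]
All three are conjugate-semistandard and $u \preceq_\maj t_1$, $u \preceq_\maj t_2$, so a closed family containing $t_1$ and $t_2$ contains all three. For $t_1$ the topmost row containing a $3$ but no $2$ is row $1$; for $t_2$ it is row $2$; in both cases replacing that $3$ by a $2$ yields $u$. Hence $|\mathcal{T}^{(j)}| < |\mathcal{T}|$, you cannot conclude $\mathcal{T}^{(j)} = \mathcal{T}$, and the weight/cardinality comparison collapses (the deduction ``$\gamma_{j+1} > \gamma_j$ forces $|\mathcal{T}^{(j)}| > |\mathcal{T}|$'' is also not justified as stated, since the move changes only one entry per tableau while $\gamma_{j+1}$ counts occurrences with multiplicity). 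A smaller slip: a tableau containing a $j+1$ need not have any row with $j+1$ but without $j$ --- for instance $\young(12,1)$ with $j = 1$ --- though such tableaux are harmless since each of their rows with a $j+1$ already contributes a $j$. Repairing the argument requires a subtler, genuinely involutive pairing of occurrences of $j$ and $j+1$ across the whole family; this is exactly what the Bender--Knuth involution is engineered to provide, and as written your proposal establishes the easy reductions but not the proposition.
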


The authors have a  proof of Proposition~\ref{prop:type}
by a variation on the Bender--Knuth involution  
(see \cite[page~47]{BenderKnuth}). Since the proposition is used only
in the following result, which is not logically essential to the two main theorems,
and its proof is not short, we have chosen to omit it from this paper.

\begin{lemma}\label{lemma:minimalimpliesclosed}
If  $(\mathcal{T}_1,\ldots, \mathcal{T}_c)$ is a conjugate-semistandard
tableau family tuple of minimal type then  each conjugate-semistandard tableau family $\mathcal{T}_i$ is closed.
\end{lemma}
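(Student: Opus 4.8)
The statement to prove is Lemma~\ref{lemma:minimalimpliesclosed}: a conjugate-semistandard tableau family $\mathcal{T}$ of minimal type is closed. The natural strategy is the contrapositive: suppose $\mathcal{T}$ is \emph{not} closed, and produce from it a conjugate-semistandard tableau family $\mathcal{T}'$ of the same shape $\mu^d$ whose type is strictly smaller in the dominance order; this contradicts minimality. So the plan is to find $v \in \mathcal{T}$ and a conjugate-semistandard $\mu$-tableau $u$ with $u \preceq_\maj v$ and $u \notin \mathcal{T}$, and then ``lower'' $v$ towards $u$ one step at a time.

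\textbf{Key steps.} First I would pick such a pair $(u,v)$ with $v$ minimal in $\mathcal{T}$ (say under the total order of Definition~\ref{defn:colorder}, or under $\preceq_\maj$ itself) among tableaux admitting a conjugate-semistandard $u \preceq_\maj v$ not in $\mathcal{T}$; this ensures that any conjugate-semistandard tableau strictly below $v$ in the relevant order already lies in $\mathcal{T}$, so I only need to modify $v$ itself. Second, since $u \preceq_\maj v$ and $u \neq v$, there is some row $i$ and some box in row $i$ of $v$ whose entry strictly exceeds the corresponding entry of $u$; I want to decrease one entry of $v$ by $1$ while preserving conjugate-semistandardness (rows strictly increasing, columns weakly increasing). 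The careful point is choosing \emph{which} box: take the lowest row $i$ in which $v$ and $u$ differ, and within that row the rightmost box $(i,j)$ with $(i,j)v > (i,j)u$; replacing $(i,j)v$ by $(i,j)v - 1$ gives a tableau $\tilde v$. One checks $\tilde v$ is still conjugate-semistandard: the row stays strictly increasing because the entry to the right (if any) was unchanged and was $> (i,j)v > (i,j)v-1$, the entry to the left was $< (i,j)v$ hence $\le (i,j)v - 1$; the column stays weakly increasing because the box above had entry $\le (i,j)v$, and I must verify it is also $\le (i,j)v-1$, which follows from examining the conjugate-semistandard and majorization conditions more carefully (this is where the choice of $i$ as the \emph{lowest} differing row, combined with $u \preceq_\maj v$, does the work: the box above $(i,j)$ lies in a row where $v$ and $u$ agree or where the relevant comparison still gives room). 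Third, I would observe $u \preceq_\maj \tilde v \preceq_\maj v$ with $\tilde v \neq v$, and either $\tilde v = u$ or $\tilde v$ still dominates some conjugate-semistandard tableau not in $\mathcal{T}$; in the latter case I could iterate, but it is cleaner to note that by minimality of $v$, $\tilde v \in \mathcal{T}$ would force a contradiction with the choice of the pair — so I set $\mathcal{T}' = (\mathcal{T} \setminus \{v\}) \cup \{\tilde v\}$, which is a conjugate-semistandard tableau family of shape $\mu^d$ (it has $d$ elements since $\tilde v \notin \mathcal{T}$), and its weight is obtained from that of $\mathcal{T}$ by decrementing one coordinate and incrementing an adjacent one, hence its type is strictly dominated by that of $\mathcal{T}$. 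Since $\mathcal{T}$ had minimal type and $\mathcal{T}'$ has the same shape, this is the desired contradiction.

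\textbf{Main obstacle.} The delicate part is verifying that the single-step move $v \mapsto \tilde v$ preserves conjugate-semistandardness — specifically the column condition on the box directly above the modified entry. One has to use the hypothesis $u \preceq_\maj v$ (not just $u$ conjugate-semistandard) together with the precise choice of the modified box to rule out the bad case where the box above $(i,j)$ already equals $(i,j)v$. I expect this to require a short case analysis: if the entry above equals $(i,j)v$, then tracing the column upward and using that $u$ is itself conjugate-semistandard with $u \preceq_\maj v$ row by row produces a contradiction with $(i,j)v > (i,j)u$, because the corresponding entries of $u$ in that column would be forced to be at least $(i,j)v$ as well. Once this column-preservation claim is nailed down, the rest — well-definedness of $\mathcal{T}'$, the dominance comparison of types, and the contradiction with minimality — is routine.
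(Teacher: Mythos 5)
Your overall strategy --- argue by contradiction, decrement one box of some $v\in\mathcal{T}$ towards a witness $u\preceq_\maj v$ with $u\notin\mathcal{T}$, and compare types --- is the paper's strategy, but two steps fail as written. The first is the choice of box. Take $\mu=(2)$ with $v$ having entries $2,3$ and $u$ entries $1,2$: the rightmost box where $v$ exceeds $u$ is $(1,2)$, and decrementing $3$ to $2$ yields the row $2,2$, which is not strictly increasing. Indeed your own justification only shows the left neighbour is $\le (i,j)v-1$, whereas strictness of rows needs $<$. The column condition fails too: for $\mu=(1,1)$ with both entries of $v$ equal to $2$ and both entries of $u$ equal to $1$, decrementing the bottom entry gives the column $2,1$. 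Your proposed rescue --- that $u\preceq_\maj v$ would force the entries of $u$ above the modified box to be \emph{at least} $(i,j)v$ --- has the inequality backwards: majorization bounds the entries of $u$ from \emph{above} by those of $v$, so the bad configuration is genuinely realizable and no contradiction is available. The correct choice (the paper's) is the lexicographically \emph{first} box $(i,j)$ at which $u$ and $v$ differ: then $u$ and $v$ agree at $(i,j-1)$ and $(i-1,j)$, so $(i,j-1)v=(i,j-1)u<(i,j)u\le (i,j)v-1$ and $(i-1,j)v=(i-1,j)u\le (i,j)u\le (i,j)v-1$, using the conjugate-semistandardness of $u$ itself at the positions of agreement.

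The second gap is in the conclusion. After a single replacement the weight of the new family is obtained from $\gamma$ by decreasing $\gamma_r$ by one and increasing $\gamma_{r-1}$ by one, and this composition need not be a partition; by Definition~\ref{defn:families}(v) a family only has a \emph{type} when its weight is a partition, so the new family may have no type at all and then cannot contradict the minimality hypothesis, which quantifies over types. The paper resolves this by iterating the decrement until the family is closed and then invoking Proposition~\ref{prop:type} (a closed family has a well-defined type); since each step strictly increases the weight in the dominance order, the final type is strictly dominated by that of $\mathcal{T}$. Your proposal needs this iteration together with Proposition~\ref{prop:type}, or some substitute argument that the one-step family already has a (strictly smaller) type. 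On the positive side, your device of taking $v$ minimal in $\preceq_\maj$ to guarantee that the decremented tableau is not already in $\mathcal{T}$ is sound, and is a workable alternative to the paper's choice of $u$ maximal.
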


\begin{proof}
Let $\mathcal{T}_i$ have shape  $\mu^d$. 
Suppose for a contradiction, that $\mathcal{T}_i$
is not closed. 
Then there exists $v \in \mathcal{T}_i$ and a conjugate-semistandard $\mu$-tableau $u$
such that $u \preceq_\maj v$ and $u \not\in \mathcal{T}_i$. 
Choose~$v$ to be minimal in the  $\preceq_\maj$ order for which such a $\mu$-tableau $u$ exists, and then pick ~$u$ maximal in the $\preceq_\maj$ order with the above property.
Let $(i,j)$ be minimal in the lexicographic
order such that $(i,j)u \not= (i,j)v$. Let $(i,j)v = r$. Since $(i,j)u < r$
and $(i-1,j)u = (i-1,j)v$ and $(i,j-1)u=(i,j-1)v$ (when these entries are defined),
 the tableau $v^-$ obtained from $v$ by replacing the entry $r$
in position $(i,j)$ with $r-1$ is conjugate-semistandard. Moreover $u \preceq_\maj v^-
\preceq_\maj v$, so by choice of $u$, we have $u=v^-$ and so $v^- \not\in \mathcal{T}_i$.
By replacing $v$ by $v^-$ and
repeating this process we eventually obtain a closed conjugate-semistandard
tableau family; by Proposition~\ref{prop:type}, this family has a well-defined
type. 
 We repeat this argument for each non-closed conjugate-semistandard
tableau family within the tuple. Replacing each such $\mathcal{T}_i$ by the  closed conjugate-semistandard tableau family  constructed above yields a closed conjugate-semistandard
tableau family tuple. By construction, this has smaller type.
\end{proof}

It immediately follows that each of the families
in a conjugate-semistandard tableau family tuple of minimal partition
type is closed.

\section{Models for generalized Foulkes modules}
\label{sec:models}

\newcommand{\MOmega}{\mathcal{M}}
\newcommand{\NOmega}{\mathcal{N}}

\subsection{Preliminaries}
\label{subsec:models_preliminary}
Recall that $\mu$ and $\nu$ are
partitions of $m$ and $n$, respectively. Let~$\Omega$ be a set of size $mn$.
Let~$\MOmega$ be the set of $\mu$-tableaux with distinct  entries from
$\Omega$. 
Let $\NOmega(\Omega)$ be the set of $\nu$-tableaux $\bT$ with entries
from $\MOmega$ such that the union of the sets of entries of the $\mu$-tableaux in
$\bT$ is $\Omega$. 
(Generally we use capital letters to denote tableaux
whose entries are themselves tableaux and bold capital letters for elements of $\NOmega(\Omega)$.) 
For brevity we  write $\NOmega$ for $\NOmega(\Omega)$ in this section.

Given $\phi \in S_\Omega$ and $\bT \in \NOmega$ we define $\bT \phi$ by making
$\phi$ act on each entry of each $\mu$-tableau in $\bT$: thus $(i,j)(\bT \phi) = ((i,j)\bT)\phi$
for each $(i,j) \in [\nu]$, 
where, in turn, if $(i,j)T = t$ then $(a,b)(t \phi) = ((a,b)t)\phi$
for each $(a,b) \in [\mu]$.
Given $\sigma = (\sigma_{(i,j)} : (i,j) \in [\nu]) \in \RPP(\mu)^{\timesn}$, 
and a tableau $\bT\in \NOmega$, let
$\bT \star \sigma \in \NOmega$ be defined
by $(i,j) (\bT \star \sigma) = (i,j)\bT \cdot \sigma_{(i,j)}$.
We define
$\bT \star \pi$ for $\pi \in \CPP(\mu)^{\timesn}$ similarly. 
We define $\sgn(\pi) = \prod_{(i,j) \in [\nu]} \sgn(\pi_{(i,j)})$.
Observe that the action of $S_\Omega$ on $\NOmega$ commutes with that of $\RPP(\mu)^{\timesn}$ and $\CPP(\mu)^{\timesn}$.

Define
$N 
= \langle \{ \bT \} : \bT \in \NOmega \rangle_\mathbb{Q}$ 
and $\widetilde{N} 
= \langle |\bT| : \bT \in \NOmega \rangle_\mathbb{Q}$. We define $\mathbb{Q}S_\Omega$-submodules $R$ and $C$ of $N$ by
\begin{align*} 
R &= \bigl\langle \{\bT \star \sigma \} - \{\bT \} : \bT \in \NOmega,\, \sigma \in \RPP(\mu)^\timesn
\bigr\rangle_\mathbb{Q}, \\
C &= \bigl\langle \{ \bT \star \pi \} - \sgn(\pi) \{ \bT \} : \bT \in \NOmega,\, \pi \in \CPP(\mu)^\timesn
\bigr\rangle_\mathbb{Q} 
\end{align*}
and $\mathbb{Q}S_\Omega$-submodules $\widetilde{R}$ and $\widetilde{C}$ of $\widetilde{N}$ by
\begin{align*}
\widetilde{R} &= \bigl\langle |\bT \star \sigma | - |\bT | : \bT \in \NOmega,\, \sigma \in \RPP(\mu)^\timesn
\bigr\rangle_\mathbb{Q}, \\
\widetilde{C} &= \bigl\langle |\bT \star \pi| - \sgn(\pi) | \bT | : \bT \in \NOmega,\, \pi \in \CPP(\mu)^\timesn
\bigr\rangle_\mathbb{Q} .
\end{align*}

Observe that if $\bT$, $\bU \in \NOmega$  and, for each $(i,j) \in [\nu]$, the
$\mu$-tableau entries $(i,j) \mathbf{T}$, $(i,j) \bU$ of $\bT$ and $\bU$
agree up to the order of their rows, then $\{\mathbf{T}\}$ and $\{\bU\}$ are congruent modulo $R$. Informally put, working modulo $R$
we may regard the $\mu$-tableau entries of tableaux in $\NOmega$ as 
 $\mu$-tabloids.

\subsection{Models}\label{subsec:models} 
To make this idea more precise we shall give explicit bases for $N/R$ and $\widetilde{N}/\widetilde{R} $
and explicit isomorphisms $N/R \cong (M^\mu \oslash M^\nu) \indmn$ and
$\widetilde{N}/\widetilde{R} \cong (M^\mu \oslash \widetilde{M}^\nu) \indmn$.
For this we must suppose that $\Omega$ is totally ordered; row-standard
and column-standard for $\mu$-tableaux then have their 
expected meanings from Definition~\ref{defn:rowcolstd}. 
If $s$ and $t$ are $\mu$-tableaux with disjoint sets of entries from $\Omega$, we
set $s <_\i t$ if
the least entry of $s$ is strictly smaller than the least entry of~$t$.

\begin{definition}\label{defn:NR}
Let $\NR$ be the set of $\bT \in \NOmega$ such that each $\mu$-tableau
entry of $\bT$ is row-standard. Let $\NR_\row$ be the set of $\bT \in \NR$
such that $\bT$ is row-standard in the order $<_\i$. Let $\NR_\col$ be the
set of $\bT \in \NR$ such that $\bT$ is column-standard in the order $<_\i$.
\end{definition}

We  need a small extension of the construction
in \S\ref{subsec:oslash}. Given $\nu$ a partition of $n$, a $\Q S_m$-module $W$  and a $\Q S_n$-module $Z$, 
let $S_\nu \le S_n$ denote the Young subgroup labelled by $\nu$ and let $W \oslash_\nu Z$ denote the
$\Q(S_m^n \rtimes S_\nu)$-module obtained by restricting $W \oslash Z$
to $S_m^n \rtimes S_\nu$. We also need the characterization of induced
modules in Alperin \cite[Chapter~III, Corollary~3]{Alperin}: if $X$ is an $\Q G$-module
and $Y$ is a $\Q K$-module,
where $G \le K$ are finite groups, then \hbox{$Y \cong X\ind_G^K$} if and only if $Y\res_G$ has
a $\Q G$-submodule $W$ such that $W$ generates~$Y$ as a $\Q K$-module, $W$ is isomorphic
to $X$ as a $\Q G$-module, and $\dim Y = |K : G|\dim X $.
We denote the trivial and alternating $\Q S_{n}$-modules by $\Q_{S_n}$ and $\sgn_{S_{n}}$ respectively.

\begin{proposition}\label{prop:models}
Let $\Gnu = S_m^\timesn \rtimes S_\nu$. Identify $S_{mn}$ and $S_\Omega$ via the unique order preserving bijection
$\{1,\ldots,mn\} \rightarrow \Omega$. Under this identification there are isomorphisms
\begin{align*}
\mathrm{(i)}\ N/R &\cong \bigl(M^\mu \oslash_\nu \Q_{S_n}\bigr)\Ind_\Gnu^{S_{mn}} 
 \cong \bigl( M^\mu \oslash M^\nu \bigr) \Indmn, \\
\mathrm{(ii)}\ N/C &\cong \bigl(\widetilde{M}^\mu \oslash_\nu \Q_{S_n} )\Ind_\Gnu^{S_{mn}} 
 \cong \bigl( \widetilde{M}^\mu \oslash M^\nu \bigr) \Indmn, \\
\mathrm{(iii)}\ \widetilde{N}/\widetilde{R} &
 \cong \bigl( M^\mu \oslash_{\nu'} \sgn_{S_n}\bigr)\Ind_{G_{\nu'}}^{S_{mn}} 
 \cong \bigl( M^\mu \oslash \widetilde{M}^\nu \bigr) \indmn, \\
\mathrm{(iv)}\ \widetilde{N}/\widetilde{C} &\cong 
(\widetilde{M}^\mu \oslash_{\nu'} \sgn_{S_{n}} \bigr)\Ind_{G_{\nu'}}^{S_{mn}}   \cong
 \bigl( \widetilde{M}^\mu \oslash \widetilde{M}^\nu \bigr) \indmn.
\end{align*}
Moreover $N/R$ has $\bigl\{ \{ \bT \} + R : \bT \in \NR_\row \bigr\}$ as a basis
and $\widetilde{N}/\widetilde{R}$ has 
 $\bigl\{ |\bT| + \widetilde{R} : \bT \in \NR_\col \bigr\}$ as a basis.
\end{proposition}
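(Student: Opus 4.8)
The plan is to establish all four isomorphisms in parallel, since they differ only by sign twists on the $\mu$-slot and by passing from $M^\nu$ (a permutation module for $S_n$) to $\widetilde{M}^\nu$ (a twisted permutation module, hence $M^{\nu'}\otimes\sgn$). I would first prove part (i) in full detail, then indicate briefly how (ii)--(iv) follow by the same argument with the obvious modifications. Throughout I use Alperin's characterization of induced modules quoted just before the statement: to show $N/R \cong (M^\mu \oslash_\nu \Q_{S_n})\Ind_{\Gnu}^{S_{mn}}$, it suffices to exhibit a $\Q\Gnu$-submodule $W$ of $(N/R)\!\res_{\Gnu}$ that is isomorphic to $M^\mu \oslash_\nu \Q_{S_n}$ as a $\Q\Gnu$-module, generates $N/R$ as a $\Q S_{mn}$-module, and has the right dimension, namely $\dim(N/R) = |S_{mn}:\Gnu|\cdot\dim(M^\mu\oslash_\nu\Q_{S_n})$.

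The concrete work proceeds as follows. First I would fix the order-preserving identification $\{1,\ldots,mn\}\leftrightarrow\Omega$ and single out the subset $\Omega_1,\ldots,\Omega_n$ of $\Omega$ into $n$ blocks of size $m$ compatible with $\Gnu$, i.e.\ $\Omega_i = \{(i-1)m+1,\ldots,im\}$, with $S_m^{\timesn}$ permuting within blocks and $S_\nu$ permuting the blocks as dictated by the Young subgroup structure. Let $\bT_0\in\NOmega$ be the distinguished tableau whose $(i,j)$ entry (for the boxes of $[\nu]$ listed in a fixed order) is the row-standard $\mu$-tableau filled with $\Omega_i$ in increasing order; then I take $W$ to be the $\Q\Gnu$-span of $\{\bT_0\star\sigma'\}+R$ as $\sigma'$ ranges over suitable coset representatives producing all block-wise row-standard fillings — more cleanly, $W$ is the image in $N/R$ of the span of $\{\bT\}$ for those $\bT\in\NR_{\row}$ whose underlying block partition of $\Omega$ is exactly $(\Omega_1,\ldots,\Omega_n)$ with $\Omega_i$ assigned to the $i$th box-orbit. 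The map sending a $\mu$-tabloid $\{t\}$ with entries in $\Omega_i$ (together with the trivial $S_n$-factor) to the class of the corresponding $\bT$ identifies $W$ with $M^\mu\oslash_\nu\Q_{S_n}$: the $S_m^{\timesn}$-action matches because modulo $R$ the $\mu$-tableau entries behave as $\mu$-tabloids (the remark following the definition of $R$), and the $S_\nu$-action matches because permuting blocks permutes the box-entries of $\bT$ exactly as in the $\oslash$-construction of \S\ref{subsec:oslash}.

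Next I would verify the three Alperin conditions. That $W$ generates $N/R$ as a $\Q S_{mn}$-module is immediate: every $\bT\in\NOmega$ can be moved by some $\phi\in S_\Omega$ so that its entries occupy the blocks $\Omega_1,\ldots,\Omega_n$ in the prescribed box-order, and row-standardness of entries can be arranged within each $\mu$-tableau by a column-preserving, row-permuting adjustment that is trivial modulo $R$. The dimension count is the key bookkeeping step: $\dim(N/R)$ equals the number of $\bT\in\NR_{\row}$ by the asserted basis (which I prove below); this counts the ways to partition $\Omega$ into an ordered-up-to-$S_\nu$ collection of $n$ $m$-subsets and fill each with a row-standard $\mu$-tableau, and one checks directly that this equals $|S_{mn}:\Gnu|$ times (number of row-standard $\mu$-tableaux on a fixed $m$-set) $=|S_{mn}:\Gnu|\cdot\dim M^\mu$, which is $|S_{mn}:\Gnu|\cdot\dim(M^\mu\oslash_\nu\Q_{S_n})$ since the $S_n$-factor is one-dimensional. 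The second isomorphism in each line, e.g.\ $(M^\mu\oslash_\nu\Q_{S_n})\Ind_{\Gnu}^{S_{mn}}\cong(M^\mu\oslash M^\nu)\Indmn$, follows from transitivity of induction together with $(\Q_{S_n})\Ind_{S_\nu}^{S_n}\cong M^\nu$ and functoriality of $\oslash$: inducing the $S_\nu$-trivial module up to $S_n$ inside the $\oslash$ produces $M^\nu$, and then inducing from $S_m\wr S_n$ to $S_{mn}$ gives the Foulkes-type module.

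The main obstacle, and the part I expect to need the most care, is proving the stated bases — that $\{\,\{\bT\}+R : \bT\in\NR_{\row}\}$ spans and is linearly independent in $N/R$, and likewise $\{\,|\bT|+\widetilde{R} : \bT\in\NR_{\col}\}$ in $\widetilde{N}/\widetilde{R}$. Spanning is a straightening argument: using the relations in $R$ one reduces the $\mu$-tableau entries of any $\bT$ to row-standard form, and then one uses that reordering the blocks of $\Omega$ to make $\bT$ row-standard in the order $<_{\i}$ is again a relation in $R$ (it only permutes whole $\mu$-tableau entries among box-orbits). For linear independence I would introduce a $\Q S_\Omega$-equivariant "leading-term'' functional, or more robustly count dimensions: $\dim(N/R)$ is at most $|\NR_{\row}|$ by spanning, and at least $|\NR_{\row}|$ because $N/R$ surjects onto — indeed is — the explicit induced module $(M^\mu\oslash M^\nu)\Indmn$ whose dimension we can compute independently as $|\NR_{\row}|$; combining with the Alperin argument above closes the loop. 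The analogous statement in the $\widetilde{N}/\widetilde{R}$ case uses column-standardness and the sign conventions of \S\ref{subsec:Specht}, and here Lemma~\ref{lemma:preorder}(i) (with $\preceq$ the order $<_{\i}$, or a refinement of it) is exactly what guarantees that the $<_{\i}$-column-standard representatives have distinct, unambiguously-signed leading columnar tabloids, so no cancellation can occur among them.
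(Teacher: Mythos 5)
Your strategy is essentially the paper's: both proofs rest on the quoted characterization of induced modules, applied to an explicit $\Q\Gnu$-submodule of $N/R$ spanned by the classes of those $\bT$ whose blocks of entries are the standard ones, and both read off the basis of $N/R$ from the resulting decomposition of $N/R$ into translates of that submodule. The only structural difference is that for the second isomorphism in each line the paper applies Alperin's criterion a second time, inside $S_m \wr S_n$, rather than invoking compatibility of $\oslash$ with induction on the second factor; your version is workable, but note that the functoriality established in \S\ref{subsec:oslash} concerns module homomorphisms, not induction, so the claim $(M^\mu\oslash_\nu\Q_{S_n})\Ind_{\Gnu}^{S_m\wr S_n}\cong M^\mu\oslash M^\nu$ still needs the generator-plus-dimension verification the paper gives.

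Two points need repair. First, your fallback argument for the linear independence of $\bigl\{\{\bT\}+R : \bT\in\NR_\row\bigr\}$ --- that $N/R$ \emph{is} the induced module $(M^\mu\oslash M^\nu)\Indmn$, whose dimension can be computed independently --- is circular: Alperin's criterion requires the equality $\dim (N/R)=|S_{mn}:\Gnu|\dim(M^\mu\oslash_\nu\Q_{S_n})$ as an input before it delivers that isomorphism, and your identification of $W$ with $M^\mu\oslash_\nu\Q_{S_n}$ likewise presupposes independence. The non-circular argument (which is what the paper uses, and what your ``leading-term functional'' remark is reaching for) is that the generators $\{\bT\star\sigma\}-\{\bT\}$ of $R$ only relate tabloids with the same assignment of $m$-subsets of $\Omega$ to the boxes of $[\nu]$; hence $N/R$ splits as a vector space over these assignments, each summand is visibly a tensor product of copies of $M^\mu$, and the row-standard representatives are independent. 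Second, a bookkeeping slip in your ``key step'': the number of fillings of the $n$ fixed blocks by row-standard $\mu$-tableaux is $(\dim M^\mu)^n$, not $\dim M^\mu$, and correspondingly $\dim(M^\mu\oslash_\nu\Q_{S_n})=(\dim M^\mu)^n$, since the underlying space is $(M^\mu)^{\otimes n}\otimes\Q$; the equality you actually need, $|\NR_\row|=|S_{mn}:\Gnu|\,(\dim M^\mu)^n$, remains correct. With these two corrections the proof goes through, including the sign-twisted cases (ii)--(iv).
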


\begin{proof}
Let $t$ be a $\mu$-tableau with distinct entries from $\{1,\ldots,m\}$. 
Let $u$ be a $\nu$-tableau with distinct entries from $\{1,\ldots,n\}$
whose rows are fixed setwise by $S_\nu \le S_n$. Then
$\langle \{u\} \rangle$ affords the trivial representation of the Young subgroup $S_\nu \le S_n$.
It is clear that $(\{t\} \otimes \cdots \otimes \{t\}) \otimes \{u\} \in M^\mu \oslash M^\nu$
generates $M^\mu \oslash_\nu \langle \{ u\} \rangle$ as a $\Q \Gnu$-module
and $M^\mu \oslash M^\nu$ as a $\Q (S_m \wr S_n)$-module.
Moreover, $\dim M^\mu \oslash_\nu \langle \{ u \} \rangle = (\dim M^\mu)^n$ and
$|S_m \wr S_n : \Gnu| = |S_n : S_\nu| = \dim M^\nu$, so
\[ \bigl( \dim M^\mu \oslash_\nu \langle \{ u \} \rangle \bigr) 
|S_m \wr S_n : \Gnu| = (\dim M^\mu)^n \dim M^\nu = \dim M^\mu \oslash M^\nu.\]
Hence, by the characterization of induced modules,  
\[ M^\mu \oslash M^\nu \cong
 (M^\mu \oslash_\nu \Q_{S_n}) \Ind_\Gnu^{S_m \wr S_n}\]
and so $(M^\mu \oslash M^\nu)\indmn \cong (M^\mu \oslash_\nu \Q_{S_n}) \ind_\Gnu^{S_{mn}}$.
This proves the second isomorphism in (i).

Let $\iota : \{1,\ldots,mn\} \rightarrow \Omega$ be the unique order preserving bijection.
For each $(i,j) \in [\nu]$, if $(i,j)$ is the $\ell$th box of $[\nu]$ in the lexicographic order 
on $[\nu]$, then set \hbox{$\Gamma_{(i,j)} = \{ (\ell-1)m+1,\ldots, \ell m\} \iota$}.
For each $(i,j) \in [\nu]$ let $t_{(i,j)}$ be the $\mu$-tableau obtained from
the fixed $\mu$-tableau~$t$ by
replacing each entry $r \in \{1,\ldots,m\}$ with the $r$th smallest entry of $\Gamma_{(i,j)}$.
(This choice is made for definiteness; any $\mu$-tableau with the same
set of entries may be used.)
For each $(a,b) \in [\mu]$, let 
\[ \Delta_{(a,b)} = \{ (a,b)t_{(i,j)} : (i,j) \in [\nu] \} \]
be the set of entries of the tableaux $t_{(i,j)}$ in position $(a,b)$.
Let $H_\Omega$ be the subgroup of $\prod_{(a,b)\in [\mu]} S_{\Delta_{(a,b)}}$ consisting
of all permutations 
that permute the $\Gamma_{(i,j)}$ as blocks for their action.
(These definitions are illustrated in the example following this proof.)
There are isomorphism of abstract groups
$H_\Omega \cong S_{\Delta_{(a,b)}} \cong S_n$
for each $(a,b) \in [\mu]$. 
Let $H_\nuOmega \le H_\Omega$ be the Young subgroup of $H_\Omega$ 
whose orbits on the blocks $\Gamma_{(i,j)}$ are
\[ \bigl\{ \Gamma_{(1,j)} : j \in \{1,\ldots, \nu_1\} \bigr\}, \ldots,
\bigl\{ \Gamma_{(k,j)} : j \in \{1,\ldots, \nu_k\} \bigr\} \]
where $k$ is the number of parts of $\nu$. The subgroup
\begin{equation}\label{eq:GnuOmega} 
G_\nuOmega = \bigl( \prod_{(i,j) \in [\nu]} S_{\Gamma_{(i,j)}} \bigr) \rtimes H_\nuOmega 
\end{equation}
of $S_\Omega$ is then conjugate to $\Gnu \le S_{mn}$, after identifying
$S_{mn}$ and $S_\Omega$ via the bijection $\iota$.
Let $\bVV \in \NOmega$  be the $\nu$-tableau with entries from $\MOmega$ defined by 
$(i,j)\bVV = t_{(i,j)}$. 
It is clear that $\langle \{\bVV\} + R \rangle$ affords
the trivial representation of the subgroup
\[ \bigl( \prod_{(i,j) \in [\nu]} \Stab_{S_{\Gamma_{(i,j)}}} \{ t_{(i,j)} \} \bigr) \rtimes H_\nuOmega \]
of $G_\nuOmega$. Therefore the $\Q G_\nuOmega$-submodule of $N/R$ generated
by $\{\bVV\} +R$ is isomorphic to \hbox{$M^\mu \oslash_\nu \Q_{S_n}$}, after identifying
$G_\nu$ and $G_\nuOmega$ via the bijection $\iota$.
The first isomorphism in (i) now follows from the characterization of induced modules:
it is given explicitly by 
\begin{equation}\label{eq:iso_i}  \{ \bVV \} + R  \mapsto \{ t \} \otimes \cdots \otimes \{t \} \otimes \{u\}.
\end{equation}
This completes the proof of~(i).

Now define
\[ \NR_X = \big\{ \bU \in \NR : \text{$(i,j)\bU$ has distinct
entries from $\Gamma_{(i,j)}$ for all $(i,j) \in [\nu]$}\bigr\} \] 
and let $X$ be the  $\Q G_\nuOmega$-submodule of $N/R$ generated by $\{\bVV\} + R$.
It follows from the choice of subsets $\Gamma_{(i,j)}$ 
that  $\NR_X \subseteq \NR_\row$ and that the
elements $\{ \bU \} + R $ for $\bU \in \NR_X$ form a basis for $X$ that
is permuted transitively by $G_\nuOmega$.
(This basis will be used later in the proof of Proposition~\ref{lemma:basis}.)
Considering the vector space decomposition $N/R = \bigoplus_\phi X \phi$
where $\phi$ runs over a set of coset representatives for the cosets
of $G_\nuOmega$ in $S_\Omega$, we see that $N/R$ has a basis as claimed.

The remaining parts can be proved similarly. For instance, for (iii), replace
$\langle \{u\} \rangle$ with $\langle |u| \rangle$ 
affording the sign representation of the Young subgroup $S_{\nu'} \le S_n$,
replace $\{\bVV\} + R$ with $|\bVV| + \widetilde{R}$, and replace
 $H_\Omega$ with a Young subgroup whose orbits on the blocks $\Gamma_{(i,j)}$ are
\[ \bigl\{ \Gamma_{(i,1)} : i \in \{1,\ldots, \nu'_1\} \bigr\}, \ldots,
\bigl\{ \Gamma_{(i,c)} : i \in \{1,\ldots, \nu'_c\} \bigr\}, \]
where $c = \nu_1$. The isomorphism is given by 
\begin{equation}\label{eq:iso_ii}
|\bVV| + R  \mapsto \{ t \} \otimes \cdots \otimes \{t \} \otimes |u|.
\end{equation}
\end{proof}

\newcommand{\Ten}{10}\newcommand{\Jack}{11}\newcommand{\Queen}{12}
To illustrate the proof of (i), we take $\mu = (2,1)$ and $\nu = (3,1)$.
Let $\Omega = \{1,\ldots, 12\}$ with the natural order; thus $\iota$ is the identity map.
If we take $t = \young(12,3)\rule{0pt}{16pt}$ then
\[ t_{(1,1)} = \young(12,3)\,,\quad t_{(1,2)} = \young(45,6)\,,\quad t_{(1,3)} = \young(78,9),
\quad
t_{(2,1)} = \young(\Ten\Jack,\Queen) \]
and $\bVV \in \NOmega$ is the $(3,1)$-tableau with these $(2,1)$-tableaux as entries.
The blocks $\Gamma_{(i,j)}$ for $(i,j) \in [\nu]$ are the entries of these $(2,1)$-tableaux,
and $\Delta_{(1,1)} = \{1,4,7,10\}$, $\Delta_{(1,2)} = \{2,5,8,11\}$, $\Delta_{(2,1)} =
\{3,6,9,12\}$. The subgroup
$H_\Omega \le S_{\{1,4,7,10\}} \times S_{\{2,5,8,11\}} \times S_{\{3,6,9,12\}}$ is generated by
the permutations
$(1,4,7,10)(2,5,8,11)(3,6,9,12)$ and $(1,4)(2,5)(3,6)$. It permutes
the $\Gamma_{(i,j)}$ as blocks for its action. The subgroup $H_{\nuOmega}$
is generated by the permutations 
$(1,4,7)(2,5,8)(3,6,9)$ and $(1,4)(2,5)(3,6)$ and
$G_\nuOmega 
= (S_{\{1,2,3\}} \times S_{\{4,5,6\}} \times S_{\{7,8,9\}} \times S_{\{10,11,12\}}) \rtimes H_{\nuOmega}$.

The set $\NR_X$
consists of all $\young(stu,v) \in \NOmega\rule{0pt}{15pt}$ such that
\begin{align*}
& s \in \Bigl\{\, \syoung{(12,3)}\,,\ \syoung{(13,2)}\,,\ \syoung{(23,1)}\, \Bigr\},\
 t \in \Bigl\{\, \syoung{(45,6)}\,,\ \syoung{(46,5)}\,,\ \syoung{(56,4)}\, \Bigr\}, \\
& u \in \Bigl\{\, \syoung{(78,9)}\,,\ \syoung{(79,8)}\,,\ \syoung{(89,7)}\, \Bigr\},\ 
v \in \Bigl\{\, \syoung{(\Ten\Jack,\Queen)}\,,\ \syoung{(\Ten\Queen,\Jack)}\,,
\ \syoung{(\Jack\Queen,\Ten)}\, \Bigr\}\, . \\[-13pt]
\end{align*}
Thus the $\Q G_\nuOmega$-submodule $X$ of $N/R$ generated by $\{ \bVV \} + R$ is isomorphic
to $M^{(2,1)} \oslash_\nu \Q_{S_4}$ and has $\bigl\{ \{ \bU \} + R : \bU \in \NR_X \bigr\}$
as a basis permuted transitively by $G_\nuOmega$.

\subsection{Maps}
We noted in \S\ref{subsec:oslash} that  $W \oslash Z$ is functorial
in $W$ and $Z$. From the canonical projections  $\widetilde{M}^\lambda \rightarrow S^\lambda$
and $M^\lambda \rightarrow \widetilde{S}^\lambda$ defined in \S\ref{subsec:Specht} we obtain
the canonical projections  made explicit in the following proposition. (The two projections
not given are the only two we do not use.)

\begin{proposition}\label{prop:modelHoms}
Let $\bUT \in \NOmega$. The homomorphisms \emph{(i)} $\widetilde{N}/\widetilde{R} \rightarrow N/R$,
\emph{(ii)} $N/R \rightarrow \widetilde{N}/\widetilde{R}$, 
\emph{(iii)} $\widetilde{N}/\widetilde{R} \rightarrow \widetilde{N}/\widetilde{C}$,
\emph{(iv)} $N/R \rightarrow N/C$, 
\emph{(v)} $\widetilde{N}/\widetilde{C} \rightarrow \widetilde{N}/\widetilde{R}$,
\emph{(vi)} $N/C \rightarrow N/R$,
corresponding to the compositions
\begin{align*}
\mathrm{(i)}\ (M^\mu  \oslash \widetilde{M}^\nu) \indmn &\twoheadrightarrow (M^\mu \oslash S^\nu)
\indmn \hookrightarrow (M^\mu \oslash M^\nu) \indmn \\
\mathrm{(ii)}\ (M^\mu \oslash M^\nu) \indmn &\twoheadrightarrow 
(M^\mu \oslash \widetilde{S}^\nu) \indmn
 \hookrightarrow (M^\mu \oslash \widetilde{M}^\nu) \indmn \\
\mathrm{(iii)}\ (M^\mu  \oslash \widetilde{M}^\nu) \indmn &\twoheadrightarrow (\widetilde{S}^\mu \oslash 
\widetilde{M}^\nu) 
\indmn \hookrightarrow (\widetilde{M}^\mu 
\oslash \widetilde{M}^\nu) \indmn \\
\mathrm{(iv)}\ (M^\mu  \oslash M^\nu) \indmn&\twoheadrightarrow 
(\widetilde{S}^\mu \oslash M^\nu) \indmn \hookrightarrow (\widetilde{M}^\mu 
\oslash M^\nu) \indmn \\
\mathrm{(v)}\ (\widetilde{M}^\mu  \oslash \widetilde{M}^\nu) \indmn&\twoheadrightarrow 
(S^\mu \oslash \widetilde{M}^\nu) \indmn \hookrightarrow (M^\mu 
\oslash \widetilde{M}^\nu) \indmn \\
\mathrm{(vi)}\ (\widetilde{M}^\mu  \oslash M^\nu) \indmn&\twoheadrightarrow 
(S^\mu \oslash M^\nu) \indmn \hookrightarrow (M^\mu \oslash M^\nu) \indmn 
\end{align*}
are defined 
on the generators $|\bUT|+\widetilde{R}$, $\{\bUT\} + R$, $|\bUT| + \widetilde{R}$, $\{\bUT\} + R$, $|\bUT| +  \widetilde{C}$, $\{ \bUT\} + C$
of their respective domains by
\leqnomode
\begin{center}
\begin{minipage}{3in}
\begin{align*}
\phantom{\qquad}|\bUT| + \widetilde{R} &\;\longmapsto \sum_{\tau \in \CPP(\nu)} \{\bUT \pp \tau\}
\sgn(\tau) + R, 
\\
\phantom{\qquad}\{\bUT\} + R &\;\longmapsto \sum_{\tau \in \RPP(\nu)} |\bUT \pp \tau| + \widetilde{R},
\\
\phantom{\qquad}|\bUT| + \widetilde{R} &\; \longmapsto \sum_{\sigma \in \RPP(\mu)^\timesn} |\bUT \star \sigma| + \widetilde{C},  \\ 
\phantom{\qquad}\{\bUT\} + R &\;\longmapsto \sum_{\sigma \in \RPP(\mu)^\timesn} \{\bUT \star \sigma\} + C, 
\\
\phantom{\qquad}|\bUT| + \widetilde{C} &\;\longmapsto \sum_{\pi \in \CPP(\mu)^\timesn} |\bUT \star \pi|\sgn(\pi) + \widetilde{R}, 
\\
\phantom{\qquad}\{\bUT\} + C &\;\longmapsto \sum_{\pi \in \CPP(\mu)^\timesn} \{ \bUT \star \pi \} \sgn(\pi) + R.
\end{align*}
\end{minipage}
\end{center}
\end{proposition}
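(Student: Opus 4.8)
The plan is to verify that each of the six displayed formulas gives a well-defined $\Q S_\Omega$-homomorphism, and that under the identifications of Proposition~\ref{prop:models} these formulas correspond to the stated compositions of the canonical projection and inclusion maps. Since the six cases are entirely parallel, I would write out case~(i) in full and then indicate the changes needed for the others. For case~(i) the recipe $|\bUT| + \widetilde{R} \mapsto \sum_{\tau \in \CPP(\nu)} \{\bUT \pp \tau\}\sgn(\tau) + R$ is exactly the ``$\nu$-polytabloid'' construction applied at the outer level, so the key observation is that the $\nu$-level place-permutation action $\pp\tau$ commutes with the $S_\Omega$-action (which acts on entries of entries) and with the inner $\star$-operations $\star\sigma$ used to define $R$ and $\widetilde R$.

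First I would check well-definedness: I must show that the map kills the defining relations of $\widetilde{R}$, i.e. that $|\bUT \star \sigma| - |\bUT|$ maps to something in $R$ for all $\sigma \in \RPP(\mu)^{\timesn}$, and similarly I must confirm the target lands in $N/R$ (not just $N$). For the first point, note $(\bUT \star \sigma)\pp\tau = (\bUT \pp\tau)\star\sigma'$ where $\sigma'$ is the tuple $\sigma$ reindexed by $\tau$; hence $\sum_\tau \{(\bUT\star\sigma)\pp\tau\}\sgn(\tau) = \sum_\tau \{(\bUT\pp\tau)\star\sigma'\}\sgn(\tau) \equiv \sum_\tau\{\bUT\pp\tau\}\sgn(\tau)$ modulo $R$, since each $\{(\bUT\pp\tau)\star\sigma'\} - \{\bUT\pp\tau\}$ lies in $R$ by definition. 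The sign of $\tau$ is unaffected because $\sigma'$ is just a relabelling of the components of $\sigma$. This proves the map on $\widetilde N$ descends to $\widetilde N/\widetilde R$. That it is $S_\Omega$-equivariant is immediate from the commuting actions: $(\bUT\phi)\pp\tau = (\bUT\pp\tau)\phi$ for $\phi \in S_\Omega$.

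Next I would identify the map with the claimed composition. Under Proposition~\ref{prop:models}(iii), $\widetilde N/\widetilde R \cong (M^\mu \oslash \widetilde M^\nu)\indmn$ via $|\bVV| + \widetilde R \mapsto \{t\}\otimes\cdots\otimes\{t\}\otimes|u|$; under Proposition~\ref{prop:models}(i), $N/R \cong (M^\mu\oslash M^\nu)\indmn$ via $\{\bVV\}+R \mapsto \{t\}\otimes\cdots\otimes\{t\}\otimes\{u\}$. The canonical projection $\widetilde M^\nu \twoheadrightarrow S^\nu$ sends $|u| \mapsto e(u) = \sum_{\tau\in\CPP(\nu)}\{u\pp\tau\}\sgn(\tau)$, and $S^\nu\hookrightarrow M^\nu$ is the inclusion; functoriality of $\oslash$ (and the fact that inducing up $G_\nu \le S_{mn}$ is exact) then shows the induced composite sends the distinguished generator of $\widetilde N/\widetilde R$ to $\sum_{\tau\in\CPP(\nu)}$ (the class of $\bVV$ with its outer $\nu$-tableau place-permuted by $\tau$) times $\sgn(\tau)$ in $N/R$, which is precisely the value of our explicit map on $|\bVV|+\widetilde R$. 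Since both the explicit map and the composite are $\Q S_{mn}$-homomorphisms agreeing on a cyclic generator, they agree everywhere. The remaining five cases are handled the same way, swapping the roles of $\RPP$ and $\CPP$, of $M$ and $\widetilde M$, of tabloids $\{\cdot\}$ and $|\cdot|$, of $R,C$ and $\widetilde R,\widetilde C$, and of the canonical projections $\widetilde M^\bullet \to S^\bullet$ versus $M^\bullet \to \widetilde S^\bullet$ listed in \S\ref{subsec:Specht}; in cases~(iii)--(vi) one works at the inner ($\mu$) level with the $\star$-action in place of the outer ($\nu$) level place-permutation action, using that $\star\sigma$ and $\star\pi$ commute with the outer structure in the same formal way.

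The main obstacle is bookkeeping rather than mathematics: keeping track of which of the four quotients $N/R$, $N/C$, $\widetilde N/\widetilde R$, $\widetilde N/\widetilde C$ is the source and which is the target in each of the six cases, and checking in each that the explicit formula both respects the source's defining relations and has image inside the target (the latter being automatic once the formula is seen to factor through the relevant canonical projection). Verifying that the relabelling $\sigma \mapsto \sigma'$ induced by $\tau$ (resp.\ $\pi$) preserves signs, so that the sign factors $\sgn(\tau)$, $\sgn(\pi)$ behave correctly, is the one point that needs a line of genuine argument; everything else follows from the commuting-actions principle and the functoriality of $\oslash$ already established in \S\ref{subsec:oslash}.
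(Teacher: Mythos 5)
Your proposal is correct and takes essentially the same route as the paper: reduce by $S_\Omega$-equivariance to the distinguished generator $\bVV$, push it through the explicit isomorphisms of Proposition~\ref{prop:models} and the canonical projection on the relevant factor, and match the result against the stated formula, concluding by cyclicity. The paper illustrates with case (iii) rather than (i), and omits your direct well-definedness check (which is harmless but redundant, since the homomorphism is \emph{defined} as the composite and the proposition only computes its value on a generator).
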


\begin{proof}
We prove (iii) to illustrate the action of the group $\RPP(\mu)^\timesn$. 
There is no loss of generality in taking $\bUT$ to be the tableau $\bVV \in \NOmega$
 defined in the proof of Proposition~\ref{prop:models}.
Also, as in this proof, let $t$ be a $\mu$-tableau
with distinct entries from $\{1,\ldots,m\}$ and let $u$ be
a $\nu$-tableau with distinct entries from $\{1,\ldots,n\}$ such that
$\langle |u| \rangle$ affords the sign representation of $S_{\nu'}$.
 By  Proposition~\ref{prop:models}
there are isomorphisms $\widetilde{N}/\widetilde{R} 
\cong (M^\mu  \oslash \widetilde{M}^\nu) \indmn$
and $\widetilde{N}/\widetilde{C} \cong (\widetilde{M}^\mu  \oslash \widetilde{M}^\nu) \indmn$
defined in Equation~(\ref{eq:iso_ii}) by
$|\bVV| + \widetilde{R} \mapsto \{t\} \otimes \cdots \otimes \{t\} \otimes |u|$ 
and (similarly)
$|\bVV| + \widetilde{C} \mapsto |t| \otimes \cdots \otimes |t| \otimes |u|$, respectively.
 The image of $\{t\}$ under
the canonical surjection $M^\mu \rightarrow \widetilde{S}^\mu$ is
$\sum_{\sigma \in \RPP(\mu)} |t \pp \sigma| $.
Hence the induced map $\widetilde{N}/\widetilde{R} \rightarrow \widetilde{N}/\widetilde{C}$  satisfies
\[
|\bVV|+ \widetilde{R} \mapsto \sum_{\sigma \in \RPP(\mu)^\timesn} |\bVV \star \sigma | + \widetilde{C},
\]
since for $\sigma = (\sigma_{(i,j)} : (i,j) \in [\nu]) \in \RPP(\mu)^n$ we have $(i,j)(\bVV \star \sigma)=t_{(i,j)}\pp \sigma_{(i,j)}$.
\end{proof}

\subsection{Models for plethystic Specht modules}

In this subsection, we combine the results so far to give explicit models for the
$\Q S_{mn}$-modules $(S^\mu \oslash M^\nu) \indmn$ and $(S^\mu \oslash 
\widetilde{M}^\nu) \indmn$. These models are used in \S\ref{sec:toTuple}.

\begin{definition}\label{defn:Nrow}
Let $\NOmega_\row$ be the set of $\bT \in \NOmega$ such that each 
row of $\bT$ is strictly increasing under $<_\i$ and let $\NOmega_\col$
be the set of $\bT \in \NOmega$ such that each column of $\bT$ is strictly increasing
under $<_\i$.
\end{definition}

We note that $\NR_\row = \NR \cap \NOmega_\row$ and $\NR_\col = \NR \cap \NOmega_\row$.

\begin{definition}\label{defn:erowcol}
For each $\bT \in \mathcal{N}$, define 
\begin{align*} \erow(\bT) &=
\sum_{\pi \in \CPP(\mu)^n} \{ \bT \star \pi \} \sgn(\pi) + R \in N/R, \\ 
\ecol(\bT) &= \sum_{\pi \in \CPP(\mu)^n} |\bT \star \pi| \sgn(\pi) + 
\widetilde{R} \in \widetilde{N}/\widetilde{R}.\end{align*}
\end{definition}

The elements $\erow(\bT)$ and $\ecol(\bT)$ behave much like ordinary polytabloids. In particular, we have
\begin{equation}
\label{eq:SpechtCyclic2}
\erow(\bT) \phi = \erow(\bT \phi) \quad\text{and}\quad \ecol(\bT) \phi = \ecol(\bT \phi)
\end{equation}
for any $\phi \in S_{\Omega}$.
The following lemma shows that a linear relation between
polytabloids in $S^\mu$ implies linear relations
in the submodules of $N/R$ and $\widetilde{N}/\widetilde{R}$
isomorphic to $(S^\mu \oslash M^\mu) \indmn$ and $(S^\mu \oslash \widetilde{M}^\nu) \indmn$,
respectively.

\begin{lemma}\label{lemma:rel}
Let $\bT \in \mathcal{N}$. Let $(i,j) \in [\nu]$, let $t = (i,j)\bT$ and let
$\Gamma$ be the set of entries of $t$. Suppose that
$e(t) = \sum_v c_v e(v)$ where the sum is over $\mu$-tableaux $v$ with
distinct entries from $\Gamma$ and $c_v \in \Q$ for each $v$.
For each such $v$, let $\bT_v \in \mathcal{N}$ be obtained from
$\bT$ by replacing the entry~$t$ in position $(i,j)$ with $v$.

\begin{thmlist}
\item If $\bT \in \mathcal{N}_\row$ 
then $\erow(\bT) = \sum_v c_v \erow(\bT_v)$
and $\bT_v \in \mathcal{N}_\row$ for each $v$.

\item If $\bT \in \mathcal{N}_\col$
then $\ecol(\bT) = \sum_v c_v \ecol(\bT_v)$
and $\bT_v \in \mathcal{N}_\col$ for each $v$.
\end{thmlist}
\end{lemma}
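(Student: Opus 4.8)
The plan is to reduce the statement to two facts about the model $N/R$ (respectively $\widetilde N/\widetilde R$): first, that $\erow$ is $\Q$-linear in a suitable formal sense in its tableau-entry argument; and second, that applying a column place permutation $\pi \in \CPP(\mu)^n$ acting only in box $(i,j)$ does not disturb the row-standardness hypotheses once we remember that we are working modulo $R$. Both statements are slightly delicate because $\erow(\bT)$ is only defined for $\bT$ whose $\mu$-tableau entries are row-standard (i.e. $\bT \in \mathcal N$, indeed we will want $\bT \in \mathcal N_\row$), whereas the tableaux $\bT_v$ obtained by substituting an arbitrary $\mu$-tableau $v$ for the entry $t$ in position $(i,j)$ need not have a row-standard entry in that box. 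So the first step is to observe that the formula $\erow(\bT) = \sum_{\pi \in \CPP(\mu)^n} \{\bT \star \pi\}\sgn(\pi) + R$ makes sense for \emph{any} $\bT \in \NOmega$, and that for $\bT, \bT'$ whose $\mu$-tableau entries agree columnwise up to reordering within columns, with signs, we get $\erow(\bT) = \pm \erow(\bT')$; more generally the assignment $\bT \mapsto \erow(\bT)$ factors through the $\Q$-span of columnar tabloids of the entries.

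The key computation is then: fix $\bT \in \mathcal N_\row$, fix the box $(i,j)$, write $t = (i,j)\bT$, and let $\Gamma$ be the entry set of $t$. The place-permutation action $\star$ in the coordinate $(i,j)$ only involves permutations of $\Gamma$, and it commutes with the action in every other coordinate; so
\[
\erow(\bT) = \sum_{\pi' \in \CPP(\mu)^{[\nu]\setminus\{(i,j)\}}} \Bigl(\sum_{\pi_{(i,j)}\in \CPP(\mu)} \{(\bT \star \pi')_{(i,j)\mapsto t\pp\pi_{(i,j)}}\}\sgn(\pi_{(i,j)})\Bigr)\sgn(\pi') + R,
\]
where the inner sum is precisely $e(t)$ realised inside the $(i,j)$-slot. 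Now substitute the hypothesised relation $e(t) = \sum_v c_v\, e(v)$ into the inner sum; since for each $\mu$-tableau $v$ with entries in $\Gamma$ the sum $\sum_{\pi_{(i,j)}} \{v \pp \pi_{(i,j)}\}\sgn(\pi_{(i,j)})$ is just the $(i,j)$-slot realisation of $e(v)$, we obtain $\erow(\bT) = \sum_v c_v \erow(\bT_v)$ in $N/R$ after reassembling the outer sum over $\pi'$. This needs only that the vector-space map "insert a polytabloid of $\Gamma$-tableaux into slot $(i,j)$ and expand with the other slots" is well-defined and linear, which follows from the explicit basis of $N/R$ in Proposition~\ref{prop:models} and the $\oslash$-functoriality in \S\ref{subsec:oslash}; I would phrase it via the fact (implicit in the proof of Proposition~\ref{prop:models}) that $N/R$ restricted to the stabiliser of the other slots is a direct sum of copies of $M^\mu \oslash_\nu \Q_{S_n}$ on which $e(t) \mapsto e(v)$ extends $\Q$-linearly.

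It remains to check the membership claims $\bT_v \in \mathcal N_\row$, i.e. that substituting $v$ leaves $\bT_v$ in $\mathcal N_\row$. Here I would only claim what is true and needed: the entries of $\bT_v$ in all boxes other than $(i,j)$ are unchanged, hence still row-standard, and the $<_\i$-ordering along the row and down the column through $(i,j)$ is governed only by the \emph{set} of entries of the box, which is $\Gamma$ for both $t$ and $v$; so the strict $<_\i$-increase in rows of $\bT_v$ holds iff it holds for $\bT$, which it does. However $v$ itself need not be row-standard as a $\mu$-tableau, so strictly $\bT_v$ need not lie in $\mathcal N_\row \subseteq \mathcal N$ — and here I would point out that this does not matter: working modulo $R$ we may replace $v$ by any $\mu$-tableau with the same columnar tabloid, and among the $\mu$-tableaux appearing with nonzero coefficient $c_v$ in the standard-basis expansion $e(t) = \sum c_v e(v)$ one takes $v$ semistandard (hence in particular row-standard), so that $\bT_v \in \mathcal N_\row$ genuinely; the formula $\erow(\bT) = \sum_v c_v \erow(\bT_v)$ is then an identity among legitimate elements. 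Part (ii) is proved the same way with $\{\ \}$ replaced by $|\ |$, $N/R$ by $\widetilde N/\widetilde R$, $\mathcal N_\row$ by $\mathcal N_\col$, $\RPP$-standardness of rows by $<_\i$-increasing columns, and Proposition~\ref{prop:models}(iii) in place of (i); no new idea is required. The main obstacle, then, is purely bookkeeping: making precise the "insert into slot $(i,j)$" linear map and verifying it is well-defined on $N/R$ independently of which representative tableau is chosen — everything else is a routine unwinding of the $\star$-action and the definitions of $\erow$, $\ecol$ and $\mathcal N_\row$, $\mathcal N_\col$.
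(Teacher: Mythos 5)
Your proposal takes essentially the same route as the paper's proof: split the sum over $\CPP(\mu)^{\timesn}$ into the $(i,j)$-component and the remaining components, recognise the inner sum as $e(t)$ realised in slot $(i,j)$, substitute the given relation, and justify the substitution against the basis $\bigl\{ \{\bU\}+R : \bU \in \NR_\row\bigr\}$ from Proposition~\ref{prop:models} (the paper makes this last step explicit by noting that the coefficient of $\{\bU\}+R$ on either side equals the coefficient of $\{u\}$ in $e(t)$, respectively in $\sum_v c_v e(v)$, where $u = (i,j)\bU$). One small correction to your closing caveat: the worry that $\bT_v$ might fail to lie in $\mathcal{N}_\row$ when $v$ is not row-standard rests on a misreading of Definition~\ref{defn:Nrow} --- membership in $\mathcal{N}_\row$ requires only that the rows of the $\nu$-tableau increase strictly under $<_\i$, not that the $\mu$-tableau entries be row-standard (that stronger condition defines $\NR_\row$), so your own observation that $v$ and $t$ have the same entry set $\Gamma$ already proves $\bT_v \in \mathcal{N}_\row$ for \emph{every} $v$, and the detour through standard $v$ (and the slip that modulo $R$ one may replace an entry by one with the same \emph{columnar} tabloid, when $R$ is generated by $\RPP(\mu)$-relations and so preserves row tabloids) is unnecessary.
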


\begin{proof}
We prove (i); the proof of (ii) is similar, using part (iii) of Proposition~\ref{prop:models}
rather than part (i).
For each $(i',j') \in [\nu]$, let
 $\Gamma_{(i',j')}$ be the set of entries of the $\mu$-tableau  $(i',j')\bT$.
Given any $\pi \in \CPP(\mu)^n$ there exists a unique permutation
\[ \tilde{\pi} \in \prod_{(i',j') \not= (i,j)} S_{\Gamma_{(i',j')}} \]
such that $(i',j')(\bT \star \pi) = \bigl( (i',j')\bT \bigr) \tilde{\pi}$
for all $(i',j') \not= (i,j)$. Note that $\sgn(\pi) = \sgn(\tilde{\pi}) \sgn(\pi_{(i,j)})$.
Moreover, since each $\bT_v$ agrees with $\bT$ in its positions
other than $(i,j)$, we  have
\[ (i',j')(\bT_v \star \pi) = \bigl( (i',j')\bT_v \bigr) \tilde{\pi} \] 
for all $(i',j') \not= (i,j)$.
Let $H = \{ \tilde{\pi} : \pi \in \CPP(\mu)^n \}$. By these remarks
\begin{align*}
e_\row(\bT) &= \sum_{\phi \in H}\sum_{\pi_{(i,j)} \in \CPP(\mu)} \{ \bT \star \pi_{(i,j)} \} 
\sgn(\pi_{(i,j)}) \phi \sgn(\phi)
+ R, \\
\intertext{and}
\sum_v c_v e_\row(\bT_v) &= \sum_v \sum_{\phi \in H} \sum_{\pi_{(i,j)} \in \CPP(\mu)} \{ \bT_v \star \pi_{(i,j)} \}
 \sgn(\pi_{(i,j)})\phi \sgn(\phi) + R,
\end{align*}
where, by a small abuse of notation, we 
regard $\pi_{(i,j)} \in \CPP(\mu)$ as an element of $\CPP(\mu)^n$ acting trivially on all $\mu$-tableaux not in position $(i,j)$ of a $\nu$-tableau. It therefore suffices to show that
\[ 
\sum_{\pi_{(i,j)} \in \CPP(\mu)} \!\!\!\!  \{ \bT \star \pi_{(i,j)} \} 
\sgn(\pi_{(i,j)})  + R \\
= \sum_{\pi_{(i,j)} \in \CPP(\mu)} \sum_v c_v  \{ \bT_v \star \pi_{(i,j)} \} \sgn(\pi_{(i,j)}) + R. 
\]
By Proposition~\ref{prop:models}(i), the  $\{\bU\} + R$ for $\bU \in \NR_\row$
are a basis for $N/R$. Since $\bT \in \mathcal{N}_\row$, the coefficient of $\{\bU\} + R$ in either side
is zero unless $\{ (i',j')\bU \} = \{(i',j') \bT \}$ for each $(i',j') \not=(i,j)$.
(Since $\bU \in \NR_\row$, this determines $\bU$ up to the entry in position $(i,j)$.)
Suppose that $(i,j)\bU = u$.
Then the coefficient of $\{\bU\} +R$ in the left-hand side is the coefficient
of $\{u\}$ in~$e(t)$, and the coefficient of $\{\bU\} +R$ in the right-hand side is the
coefficient of $\{u\}$ in $\sum_v c_ve(v)$. These agree by hypothesis.
\end{proof}

By the Standard Basis Theorem (see \cite[Theorem~8.5] {James}), if $t$
is a $\mu$-tableau with distinct entries from $\Gamma \subseteq \Omega$ then
$e(t) \in S^\mu$ is an integral linear combination of polytabloids $e(u)$
for standard $\mu$-tableaux $u$ with distinct entries from $\Gamma$.
This motivates the following definition.

\begin{definition}\label{defn:NS}
Let $\NS$ be the set of $\bT \in \mathcal{N}$ such that all $\mu$-tableau entries
of $\bT$ are standard. 
 Let $\NS_\row = \NS \cap \NOmega_\row$ and let $\NS_\col = \NS \cap \NOmega_\col$.
\end{definition}

\begin{lemma}{\ }\label{lemma:basis}
\begin{thmlist}
\item The set $\mathcal{B}_\row=\{ \erow(\bS) : \bS \in \NS_{\row} \}$ 
is a basis for a submodule of $N/R$ isomorphic to \linebreak $(S^\mu \oslash M^\nu)\indmn$.

\item The set $\mathcal{B}_\col=\{ \ecol(\bS)  : \bS \in \NS_{\col} \}$ 
is a basis for a submodule of $\widetilde{N}/\widetilde{R}$ isomorphic to \linebreak $(S^\mu \oslash
\widetilde{M}^\nu)\indmn$.
\end{thmlist}
\end{lemma}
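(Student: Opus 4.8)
The plan is to realise the submodule in question as the image of the canonical map of Proposition~\ref{prop:modelHoms}(vi), to cut its obvious spanning set down to $\NS_\row$ using Lemma~\ref{lemma:rel}(i) together with the Standard Basis Theorem, and to finish with a dimension count. I treat part~(i); part~(ii) runs in complete parallel, with $\erow$, $\NS_\row$, $N/R$, Lemma~\ref{lemma:rel}(i) and Proposition~\ref{prop:modelHoms}(vi) replaced throughout by $\ecol$, $\NS_\col$, $\widetilde N/\widetilde R$, Lemma~\ref{lemma:rel}(ii) and Proposition~\ref{prop:modelHoms}(v), the only new ingredient being the sign $|\bT\pp\rho| = \sgn(\rho)|\bT|$ for $\rho\in\CPP(\nu)$.

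First I would let $J\subseteq N/R$ be the image of the map $\{\bT\}+C\mapsto\erow(\bT)$ of Proposition~\ref{prop:modelHoms}(vi); by that proposition $J\cong(S^\mu\oslash M^\nu)\indmn$, and since the cosets $\{\bT\}+C$ for $\bT\in\NOmega$ span $N/C$ we have $J=\langle\erow(\bT):\bT\in\NOmega\rangle$. To reduce this spanning set, observe that for a row place permutation $\rho\in\RPP(\nu)$ of $[\nu]$ one has $(\bT\pp\rho)\star\pi=(\bT\star\tilde\pi)\pp\rho$ for the evident reindexing $\tilde\pi\in\CPP(\mu)^n$, with $\sgn(\tilde\pi)=\sgn(\pi)$; since $\{\bU\pp\rho\}=\{\bU\}$ as $\nu$-tabloids this yields $\erow(\bT\pp\rho)=\erow(\bT)$. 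Because distinct $\mu$-tableau entries of a tableau in $\NOmega$ have disjoint, and so distinct-least, entry sets, every $\bT$ may then be assumed to lie in $\NOmega_\row$. For such a $\bT$ I would run through the boxes $(i,j)\in[\nu]$ in turn and, at each, use the Standard Basis Theorem to write $e\bigl((i,j)\bT\bigr)=\sum_v c_v\,e(v)$ with each $v$ a standard $\mu$-tableau on the same entry set; Lemma~\ref{lemma:rel}(i) converts this into $\erow(\bT)=\sum_v c_v\erow(\bT_v)$ with all $\bT_v\in\NOmega_\row$. Since entry sets---and hence the $<_\i$-order of the rows of the $\nu$-tableau---are never altered, after all $n$ boxes have been processed we obtain $\erow(\bT)\in\langle\erow(\bS):\bS\in\NS_\row\rangle$, so $J=\langle\erow(\bS):\bS\in\NS_\row\rangle$.

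It then remains to see that this spanning set is linearly independent, which I would do by comparing cardinalities. An element of $\NS_\row$ amounts to a distribution of the $mn$ points of $\Omega$ into the $n$ boxes of $[\nu]$, $m$ per box, together with a standard $\mu$-tableau in each box, subject only to the rows of the $\nu$-tableau being $<_\i$-increasing; hence $|\NS_\row|=\frac{(mn)!}{(m!)^n\prod_i\nu_i!}(f^\mu)^n$ where $f^\mu=\dim S^\mu$, and this equals $(f^\mu)^n\cdot\dim M^\nu\cdot|S_{mn}:S_m\wr S_n|=\dim(S^\mu\oslash M^\nu)\indmn=\dim J$, so we are done (the same count with $\nu'$ for $\nu$ gives $|\NS_\col|=\dim(S^\mu\oslash\widetilde M^\nu)\indmn$ for part~(ii)). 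I expect the reduction step to be the point needing the most care: one must check that applying Lemma~\ref{lemma:rel}(i) box by box neither leaves $\NOmega_\row$ nor fails to terminate, both of which hold because the Standard Basis Theorem rewrites a polytabloid using only polytabloids on the same $m$-element set, so no $<_\i$-comparison between boxes is ever disturbed. One could bypass the dimension count and prove independence directly by expanding each $\erow(\bS)$ in the basis $\{\{\bU\}+R:\bU\in\NR_\row\}$ of Proposition~\ref{prop:models}(i): tableaux $\bS$ whose $\mu$-tableau entries partition $\Omega$ differently involve disjoint families of basis vectors, and for a fixed such partition the question reduces, inside the corresponding $S_\Omega$-translate of the submodule $X$ of Proposition~\ref{prop:models}, to the independence of polytabloids of standard $\mu$-tableaux in each of the $n$ boxes; this route costs more place-permutation bookkeeping, which is why the dimension count is the cleaner of the two.
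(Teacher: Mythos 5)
Your argument is correct and is essentially the paper's: both realise the submodule as the image of the map in Proposition~\ref{prop:modelHoms}(vi), cut the spanning set down to $\NS_\row$ (resp.\ $\NS_\col$) via Lemma~\ref{lemma:rel} together with the Standard Basis Theorem, and conclude by a dimension count. The only difference is organizational --- the paper carries out the reduction and the count inside the $\Q G_\nuOmega$-submodule $Z$ of $X$ and then globalizes via the decomposition $N/R = \bigoplus_\phi X\phi$, whereas you work globally, using the identity $\erow(\bT \pp \rho) = \erow(\bT)$ for $\rho \in \RPP(\nu)$ to sort rows and a single count of $|\NS_\row|$.
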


\begin{proof}
We prove (i); 
the proof of~(ii) is similar.
Let $\bVV \in \mathcal{N}$ be as constructed in the proof of Proposition~\ref{prop:models}.
Let $V$ be 
the $\Q S_\Omega$-submodule of $N/R$ generated by $\erow(\bVV)$.
By Proposition~\ref{prop:modelHoms}(vi), $V$
is isomorphic to $(S^\mu \oslash M^\nu) \indmn$. 
It remains to show that $V$ has a basis $\mathcal{B}_\row$ as claimed.

Let 
$G_\nuOmega$ be as defined in~\eqref{eq:GnuOmega} in the proof of Proposition~\ref{prop:models}.
(Recall that, after identifying $S_{mn}$ with $S_\Omega$, $G_\nuOmega$
is conjugate to $S_m^{\timesn} \rtimes S_\nu$.)
Let $\NR_X$ be as defined in the proof of Proposition~\ref{prop:models}.
Recall that
$\bigl\{ \{ \bU \} + R : \bU \in \NR_X \bigr\}$ is a basis for a $\Q G_\nuOmega$-submodule
$X$ of $N/R$ isomorphic to $M^\mu \oslash_\nu \Q_{S_n}$ (after identifying
$G_\nu$ with $G_{\nuOmega}$), and that the basis
elements are permuted transitively by $G_{\nuOmega}$.
By definition of the sets $\Gamma_{(i,j)}$ in the construction of $\bVV$,
we have $\NR_X \subseteq \NOmega_\row$.

Let $Z$ be the $\Q G_\nuOmega$-submodule of $X$ generated by $\erow(\bVV)$.
By Propositions~\ref{prop:models}(i) 
and~\ref{prop:modelHoms}(vi),~$Z$ is isomorphic to $S^\mu \oslash_\nu \Q_{S_n}$
(after identifying $G_\nu$ with $G_\nuOmega$).
By~\eqref{eq:SpechtCyclic2} we have
$\erow(\bVV) \psi = \erow(\bVV \psi)$
for $\psi \in G_\nuOmega$. Since the top group $H_\nuOmega$ in the definition of $G_\nuOmega$
acts trivially on $X$, we may assume
that $\psi \in \prod_{(i,j) \in [\nu]} S_{\Gamma_{(i,j)}}$
and so each $\mu$-tableau entry $(i,j) (\bVV\psi)$ has the same set of entries as $(i,j)\bVV$. Thus
$(i,j)(\bVV\psi) $ has distinct entries from $\Gamma_{(i,j)}$ for all $(i,j) \in [\nu]$.
By repeated applications of
Lemma~\ref{lemma:rel} and the Standard Basis Theorem we may express each $\erow(\bVV \psi)$ as a linear
combination of elements $\erow(\bS)$ for $\bS \in \mathcal{NS}_\row \cap \NR_X$.
Hence the elements $e_\row(\bS)$ for $\bS \in \NS_\row \cap \NR_X$ span $Z$.
By dimension counting, using the isomorphism $Z \cong S^\mu \oslash_\nu \Q_{S_n}$, we see
that they are linearly independent.

Finally since $N/R$ is induced from $X$, there is a vector
space decomposition
\[ \label{eq:dsum} N/R = \bigoplus_\phi X \phi \]
where $\phi$ runs
over a set of coset representatives for $G_\nuOmega$ in $S_\Omega$.
The result now follows because $\phi$ sends the basis 
$\{ e_\row(\bS) : \bS \in \NS_\row \cap \NR_X\}$ of $Z$ to a basis of the relevant subspace of $N/R$.
\end{proof}


\section{Homomorphisms defined using conjugate-semistandard tableau family tuples}
\label{sec:homs}

In this section we use the models defined in \S\ref{sec:models} 
to define the homomorphisms needed to prove Theorem~\ref{thm:main}.

\subsection{Tableaux from conjugate-semistandard tableau family tuples}\label{subsec:tableaux}
Let $\mathcal{T} = (\mathcal{T}_1, \ldots, \mathcal{T}_c)$ be a conjugate-semistandard tableau family
tuple of shape $\mu^\kappa$ and type~$\lambda$, as in Theorem~\ref{thm:main}.
For each $i \in \{1,\ldots,c\}$ and each $j \in \{1,\ldots, \kappa_i\}$,
let $s_{(i,j)}$ be the $j$th smallest conjugate-semistandard tableau
in $\mathcal{T}_i$ under the total order $<$ defined in Definition~\ref{defn:colorder}.
Let~$S$ be the $\kappa$-tableau defined by $(i,j)S = s_{(i,j)}$ for each $(i,j)$.
Each entry of~$S$ is a $\mu$-tableau with entries from $\N$; for each $i \in \N$ the total number of 
entries equal to $i$ that appear is $\lambda_i'$. 

Let $\Omega^\lambda$ be the set of symbols defined in \S\ref{subsec:symbols}.
Let $\Sind$ be a $\kappa$-tableau obtained by appending indices to the entries of each $\mu$-tableau $(i,j)S$ so that the set of all the entries of the $\mu$-tableaux in $\Sind$ is $\Omega^\lambda$
and so that each $\mu$-tableau in~$\Sind$ has strictly increasing columns in the 
total order on $\Omega^\lambda$ defined in \S\ref{subsec:symbols}.
For definiteness we fix
the following procedure: start with the $\mu$-tableau $(1,1)S$, and continue in lexicographic
order of the boxes of $\nu$,
finishing with the $\mu$-tableau $(c,\kappa_c)S$, appending indices within 
each $\mu$-tableau in lexicographic order of the boxes of $\mu$. 
(For an example see Example~\ref{ex:ex}.)

Let $\bTT = \Sind'$ if $m$ is even and let $\bTT = \Sind$ if $m$ is odd. Recall that $\nu = \kappa'$
if $m$ is even, and $\nu = \kappa$ if $m$ is odd.
Thus $\bTT$ is a $\nu$-tableau in the set $\mathcal{N}(\Omega^\lambda)$, as defined in \S\ref{subsec:models_preliminary}.

\subsection{Homomorphisms}\label{subsec:homs}
Identify
$S_{mn}$ with $S_{\Omega^\lambda}$
using the unique order preserving bijection $\{1,\ldots,mn\} \rightarrow \Omega^\lambda$
and identify the $\Q S_{\Omega^\lambda}$-modules $N/R$, $\widetilde{N}/\widetilde{R}$,
$N/C$, $\widetilde{N}/\widetilde{C}$ defined in \S\ref{sec:models} with the $\Q S_{mn}$-modules 
$(M^\mu \oslash M^\nu) \indmn$, $(M^\mu \oslash \widetilde{M}^\nu) \indmn$,
$(\widetilde{M}^\mu \oslash M^\nu) \indmn$, $(\widetilde{M}^\mu \oslash \widetilde{M}^\nu) \indmn$
by four fixed isomorphisms. (These isomorphisms exist by Proposition~\ref{prop:models}.)

Let $t_\lambda$ be the $\lambda$-tableau with distinct entries from $\Omega^\lambda$ defined
in \S\ref{subsec:symbols}. Note that $|t_\lambda|$ generates $\widetilde{M}^\lambda$ as a $\Q S_{\Omega^\lambda}$-module.
Let $b_\lambda = \sum_{\theta \in I(\lambda)} \sgn(\theta) \theta$, where $I(\lambda)$ is the index
permuting group defined in \S\ref{subsec:symbols}.
We define homomorphisms $\f$ and $\g$ by
\begin{align*}
&
\f : \widetilde{M}^\lambda \rightarrow 
(M^\mu \oslash \widetilde{M}^\nu) \Indmn,
\quad |t_\lambda| f_{(\mathcal{T}_1,\ldots,\mathcal{T}_c)} = |\bTT| b_\lambda + \widetilde{R},\\
&
\g : \widetilde{M}^\lambda \rightarrow 
(M^\mu \oslash M^\nu) \Indmn, 
\quad |t_\lambda| g_{(\mathcal{T}_1,\ldots,\mathcal{T}_c)} = \{\bTT\} b_\lambda + R.
\end{align*}
That these are well-defined homomorphisms follows from the characterisation of induced modules.

Define
\begin{align*} 
&
\overf : \widetilde{M}^\lambda \rightarrow 
(\widetilde{S}^\mu \oslash S^\nu) \Indmn, \\ 
& 
\overg : \widetilde{M}^\lambda \rightarrow 
(\widetilde{S}^\mu \oslash \widetilde{S}^\nu) \Indmn
\end{align*}
by composing $\f$ 
with the canonical surjection
$(M^\mu \oslash \widetilde{M}^\nu) \indmn \twoheadrightarrow (M^\mu \oslash S^\nu) 
\indmn \twoheadrightarrow
(\widetilde{S}^\mu \oslash S^\nu) \indmn$ and composing
$\g$
with the canonical surjection
$(M^\mu \oslash M^\nu) \indmn \twoheadrightarrow (M^\mu \oslash \widetilde{S}^\nu) \indmn \twoheadrightarrow
(\widetilde{S}^\mu \oslash \widetilde{S}^\nu) \indmn$, respectively.
We remind the reader that
these definitions require the four fixed isomorphisms.
For example, the image of the homomorphism $\overf$ is, strictly speaking,
a submodule of $N/C$; to obtain $\overf$ as defined above we must identify $N/C$ with
$(\widetilde{M}^\mu \oslash M^\nu) \indmn$. 

We have
the following explicit description of the homomorphisms $\overf$ 
and~$\overg$. 

\begin{proposition}\label{prop:homsExplicit}
The images of $|t_\lambda| \in \widetilde{M}^\lambda$ under the homomorphisms
$\overf$ 
and $\overg$ are 
\begin{align*}
|t_\lambda|\overf 
&= \sum_{{\theta \in I(\lambda) \atop \tau \in \CPP(\nu)} \atop \sigma \in \RPP(\mu)^{\timesn}}
 \bigl\{ (\bTT\theta \pp \tau) \star \sigma\bigr) \bigr\} \sgn(\theta)\sgn(\tau) + C, \\
|t_\lambda|\overg 
&= \sum_{{\theta \in I(\lambda) \atop \tau \in \RPP(\nu)} \atop \sigma \in \RPP(\mu)^{\timesn}}
 \bigl| (\bTT\theta \pp \tau) \star \sigma \bigr| \sgn(\theta) + \widetilde{C},
\end{align*}
respectively.
\end{proposition}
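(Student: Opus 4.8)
The plan is to unwind the definitions of $\overf$ and $\overg$ step by step, tracking the image of $|t_\lambda|$ through each map in the composition. Recall that $\f$ sends $|t_\lambda|$ to $|\bTT| b_\lambda + \widetilde{R} = \sum_{\theta \in I(\lambda)} |\bTT \theta| \sgn(\theta) + \widetilde{R}$, using that $I(\lambda)$ acts on the symbols $\Omega^\lambda$ and that $|t_\lambda \pp \theta| = |t_\lambda| \sgn(\theta)$ for $\theta$ a column place permutation (here $I(\lambda)$ permutes entries within columns, and the identification $|t_\lambda| \leftrightarrow ||t_\lambda||$ up to sign from \S\ref{subsec:Specht}). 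First I would apply the canonical surjection $\widetilde{N}/\widetilde{R} \to N/R$, which by Proposition~\ref{prop:modelHoms}(i) sends $|\bUT| + \widetilde{R} \mapsto \sum_{\tau \in \CPP(\nu)} \{\bUT \pp \tau\} \sgn(\tau) + R$; this realizes the projection $(M^\mu \oslash \widetilde{M}^\nu)\indmn \twoheadrightarrow (M^\mu \oslash S^\nu)\indmn \hookrightarrow (M^\mu \oslash M^\nu)\indmn$. Applying this to $|\bTT \theta| + \widetilde{R}$ and summing over $\theta$ gives $\sum_{\theta \in I(\lambda),\, \tau \in \CPP(\nu)} \{ \bTT\theta \pp \tau \} \sgn(\theta)\sgn(\tau) + R$.

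Next I would apply the canonical surjection $N/R \to N/C$ of Proposition~\ref{prop:modelHoms}(iv), which sends $\{\bUT\} + R \mapsto \sum_{\sigma \in \RPP(\mu)^{\timesn}} \{ \bUT \star \sigma\} + C$, realizing the projection onto $(\widetilde{S}^\mu \oslash M^\nu)\indmn$ inside $(\widetilde{M}^\mu \oslash M^\nu)\indmn \cong N/C$. Composing, $|t_\lambda|\overf = \sum_{\theta,\tau,\sigma} \{ (\bTT\theta \pp \tau) \star \sigma \} \sgn(\theta)\sgn(\tau) + C$, which matches the claimed formula; here I must check that the $\star$-action commutes appropriately with the place-permutation $\tau \in \CPP(\nu)$ and the $S_\Omega$-action of $\theta$, so that the iterated sums combine correctly — this is essentially the compatibility used in the proof of Lemma~\ref{lemma:rel}. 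The computation for $\overg$ is parallel: $\g$ sends $|t_\lambda|$ to $\{\bTT\} b_\lambda + R$, then Proposition~\ref{prop:modelHoms}(ii) gives the map $N/R \to \widetilde{N}/\widetilde{R}$, $\{\bUT\}+R \mapsto \sum_{\tau \in \RPP(\nu)} |\bUT \pp \tau| + \widetilde{R}$ (projection onto $(M^\mu \oslash \widetilde{S}^\nu)\indmn$), and then Proposition~\ref{prop:modelHoms}(iii), $|\bUT| + \widetilde{R} \mapsto \sum_{\sigma \in \RPP(\mu)^{\timesn}} |\bUT \star \sigma| + \widetilde{C}$ (projection onto $(\widetilde{S}^\mu \oslash \widetilde{S}^\nu)\indmn$), yielding $|t_\lambda|\overg = \sum_{\theta,\tau,\sigma} |(\bTT\theta \pp \tau)\star \sigma| \sgn(\theta) + \widetilde{C}$. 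Note no sign on $\tau$ appears here because $\tau$ ranges over $\RPP(\nu)$ rather than $\CPP(\nu)$, consistent with the claimed formula.

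The main subtlety — not a deep one, but where care is needed — is bookkeeping with the four fixed isomorphisms of \S\ref{subsec:homs} and the directions of the $\twoheadrightarrow$ and $\hookrightarrow$ arrows in Proposition~\ref{prop:modelHoms}: one must confirm that the composite of the two projections in each of $\overf$, $\overg$ corresponds exactly to the pair of maps from Proposition~\ref{prop:modelHoms} in the right order, and that the inclusion steps $(M^\mu \oslash S^\nu)\indmn \hookrightarrow (M^\mu \oslash M^\nu)\indmn$ etc.\ are realized as the identity-on-generators at the level of the $N$, $\widetilde{N}$ models (so they contribute nothing further to the formula). Once the identification of each arrow with the corresponding map in Proposition~\ref{prop:modelHoms} is pinned down, the formulas drop out by direct substitution and the observation that all the group actions involved ($\theta \in I(\lambda) \le S_{\Omega^\lambda}$ acting on entries, $\tau$ acting by place permutation, $\sigma$ acting by $\star$) mutually commute in the required sense, so the nested sums collapse into a single sum over $I(\lambda) \times \CPP(\nu) \times \RPP(\mu)^{\timesn}$ (respectively $I(\lambda) \times \RPP(\nu) \times \RPP(\mu)^{\timesn}$).
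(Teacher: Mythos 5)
Your proposal is correct and takes essentially the same route as the paper, whose proof consists of the single line ``this is immediate from Proposition~\ref{prop:modelHoms}'': you have simply made the immediacy explicit by expanding $|\bTT|b_\lambda$ and $\{\bTT\}b_\lambda$ and pushing them through maps (i) then (iv), respectively (ii) then (iii), of that proposition. The only superfluous point is the appeal to $|t_\lambda \pp \theta| = |t_\lambda|\sgn(\theta)$, which is not needed to expand $|\bTT|b_\lambda = \sum_{\theta \in I(\lambda)}|\bTT\theta|\sgn(\theta)$ --- that is just the definition of $b_\lambda$ together with $|\bT|\theta = |\bT\theta|$.
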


\begin{proof}
This is immediate from Proposition~\ref{prop:modelHoms}.
\end{proof}

We end this section with an example of these homomorphisms.

\begin{example}\label{ex:ex}
Take $m=3$, $n=5$, $\mu = (2,1)$ and $\nu = (4,1)$. Take
the conjugate-semistandard tableau family tuple
\[ \mathcal{T} = \Bigl(  \Bigl\{  \hsss\young(12,1)\sp\young(12,2)\sp\young(13,1)\sp\young(13,2)\hsss \Bigr\}, 
\Big\{  \hsss \young(12,1)\hsss \Bigr\} \Bigr) \]
of shape $(2,1)^{(4,1)}$ and type $(3^2,2^3,1^3)$. (We have written tableaux in the total order $<$
defined in Definition~\ref{defn:colorder}.)
Appending indices to the tableaux in $\mathcal{T}$ we obtain
\[  \setlength{\arrayrulewidth}{.06em}
\makeatletter
\def\y@vr{\vrule height0.8\y@b@xdim width\y@linethick depth 0.3\y@b@xdim}
\def\y@setdim{%
  \ify@autoscale%
   \ifvoid1\else\typeout{Package youngtab: box1 not free! Expect an
     error!}\fi%
   \setbox1=\hbox{A}\y@b@xdim=1.8\ht1 \setbox1=\hbox{}\box1%
  \else\y@b@xdim=\y@boxdim \advance\y@b@xdim by -2\y@linethick
  \fi}
\makeatother
\bTT = \begin{tabular}{|c|c|c|c|}  \hline
$\young(\oa\ta,\ob)$ & $\young(\oc\tb,\tc)$ & $\young(\od\da,\oe)$ & $\young(\of\db,\td)$\rule[-17pt]{0pt}{40pt} \\
\cline{1-4}
$\young(\og\te,\oh)$\rule[-17pt]{0pt}{40pt} \\
\cline{1-1}
\end{tabular} \in \mathcal{N}(\Omega^{(3^2,2^3,1^3)}).
\]
The corresponding homomorphism 
\[ \overg : \widetilde{M}^{(3^2,2^3,1^3)} \rightarrow 
(\widetilde{S}^{(2,1)} \oslash \widetilde{S}^{(4,1)}) \Ind_{S_3 \wr S_5}^{S_{15}} \subseteq 
(\widetilde{M}^{(2,1)} \oslash
\widetilde{M}^{(4,1)}) \Ind_{S_3 \wr S_5}^{S_{15}} \cong \widetilde{N}/\widetilde{C}\]
sends the generator $|t_{(3^2,2^3,1^3)}|$ to
\[
\makeatletter
\def\y@vr{\vrule height0.8\y@b@xdim width\y@linethick depth 0.3\y@b@xdim}
\def\y@setdim{%
  \ify@autoscale%
   \ifvoid1\else\typeout{Package youngtab: box1 not free! Expect an
     error!}\fi%
   \setbox1=\hbox{A}\y@b@xdim=1.8\ht1 \setbox1=\hbox{}\box1%
  \else\y@b@xdim=\y@boxdim \advance\y@b@xdim by -2\y@linethick
  \fi}
\makeatother
 \setlength{\arrayrulewidth}{.08em}
\mathlarger{\sum}_{{{\theta \in I(3^2,2^3,1^3)} \atop \tau \in \RPP(4,1)} \atop \scriptscriptstyle
\sigma \in \RPP(2,1)^{5}}
\sgn(\theta)  \hsss \left|\,\left(\, \hsss
\begin{tabular}{|c|c|c|c|}  \hline
$\young(\oa\ta,\ob)$ & $\young(\oc\tb,\tc)$ & $\young(\od\da,\oe)$ & $\young(\of\db,\td)$\rule[-17pt]{0pt}{40pt} \\
\cline{1-4}
$\young(\og\te,\oh)$\rule[-17pt]{0pt}{40pt} \\
\cline{1-1}
\end{tabular} 
\hsss\, \theta \pp \tau \! \right) \star \sigma \right|{}+ \widetilde{C}.
 \]
The proof of Proposition~\ref{prop:nonzero} shows that the coefficient of $|\bTT| + \widetilde{C}$
in $|t_\lambda| \overg$ is $2$; correspondingly, 
$\theta = (\oa,\og)(\ob,\oh)(\ta,\te)$ swaps the two $(2,1)$-tableaux
in the first column of each columnar 
tabloid on the right-hand side, so
$ \sgn(\theta)  |(\bTT \theta \cdot \tau) \star \sigma| 
= |(\bTT \cdot \tau) \star \sigma|$ for all 
$\tau \in \RPP(4,1)$ and $\sigma \in \RPP(2,1)^5$.

We saw in Example~\ref{ex:ex} that $\mathcal{T}$ has minimal type. It
therefore follows from~Theorem~\ref{thm:main} and
Lemma~\ref{lemma:Mtilde} that the kernel of $\overg$ contains the Garnir relations
generating the kernel of
the canonical homomorphism $\widetilde{M}^{(3^2,2^3,1^3)} \rightarrow S^{(3^2,2^3,1^3)}$.
(In this case it is also possible to prove this in a more explicit way by
adapting the argument used in \cite[Proposition 5.2]{PagetWildonTwisted}.)
Hence we obtain a well-defined homomorphism from $S^{(3^2,2^3,1^3)}$ to the
module $(\widetilde{S}^{(2,1)} \oslash \widetilde{S}^{(4,1)}) \ind_{S_3 \wr S_5}^{S_{15}}$ 
isomorphic to $H_{(2,1)}^{(4,1)}$. We continue in Example~\ref{ex:excont} to
show that the multiplicity of $S^{(3^2,2^3,1^3)}$ in $H_{(2,1)}^{(4,1)}$ is $2$.
\end{example}

\section{The homomorphisms $\overf$ and $\overg$ are non-zero}
\label{sec:nonzero}

In this section we prove the following proposition on the homomorphisms defined in \S\ref{subsec:homs}

\begin{proposition}\label{prop:nonzero}
Let $(\mathcal{T}_1,\ldots,\mathcal{T}_c)$ be a conjugate-semistandard tableau family
tuple of shape $\mu^\kappa$ and type $\lambda$.
\begin{thmlist}
\item If $m$ is even then the homomorphism
\[ \overline{f}_{(\mathcal{T}_1,\ldots,\mathcal{T}_c)} : \widetilde{M}^\lambda \rightarrow
\bigl( \widetilde{S}^\mu \oslash S^\nu \bigr)\Indmn \]
is non-zero.
\item If $m$ is odd then the homomorphism
\[ \overline{g}_{(\mathcal{T}_1,\ldots,\mathcal{T}_c)} : \widetilde{M}^\lambda \rightarrow
\bigl( \widetilde{S}^\mu  \oslash \widetilde{S}^\nu \bigr)\Indmn \]
is non-zero.
\end{thmlist}
\end{proposition}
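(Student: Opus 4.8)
The plan is to show that the generator $|t_\lambda|$ of $\widetilde M^\lambda$ has non-zero image, which is enough since $|t_\lambda|$ generates the module. For $m$ even I would work with the explicit formula for $|t_\lambda|\overf$ in Proposition~\ref{prop:homsExplicit}, a signed sum of classes $\{(\bTT\theta\pp\tau)\star\sigma\}+C$ over $\theta\in I(\lambda)$, $\tau\in\CPP(\nu)$ and $\sigma\in\RPP(\mu)^n$; for $m$ odd, with the companion formula for $|t_\lambda|\overg$ in $\widetilde N/\widetilde C$, where now $\tau\in\RPP(\nu)$. In either case the goal is to prove that the coefficient of the basis vector labelled by $\bTT$ itself is a strictly positive integer (this is the value computed to be $2$ in Example~\ref{ex:ex}). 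That $\bTT$ labels a basis vector follows from the construction in \S\ref{sec:homs}: each of its $\mu$-tableau entries is a conjugate-semistandard tableau with indices appended to make its columns strictly increasing in the order on $\Omega^\lambda$, so --- since a conjugate-semistandard tableau already has strictly increasing rows and the order on $\Omega^\lambda$ refines comparison of numbers --- every $\mu$-tableau entry of $\bTT$ is both row- and column-standard, and moreover $\bTT$ is itself $<_\i$-standard (by rows when $m$ is even, by columns when $m$ is odd). Repeating the argument of Proposition~\ref{prop:models} for the quotients by $C$ and $\widetilde C$ rather than by $R$ and $\widetilde R$ then identifies $\{\bTT\}+C$ (resp.\ $|\bTT|+\widetilde C$) as a basis element.

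The first substantive step is to decide which triples $(\theta,\tau,\sigma)$ can contribute to this coefficient. Since $\theta\in I(\lambda)$ only permutes symbols within index-classes it preserves the number of every symbol, so each $\mu$-tableau entry of $\bTT\theta$ is still row-standard; Lemma~\ref{lemma:preorder}(i) then says that a non-trivial row-place permutation of such an entry strictly lowers its columnar tabloid in the order on $\Omega^\lambda$, while the passage to $N/C$ (resp.\ $\widetilde N/\widetilde C$) is blind to column-place permutations of the $\mu$-entries. A position-by-position comparison of the $\mu$-columnar tabloids of $(\bTT\theta\pp\tau)\star\sigma$ with those of $\bTT$, combined with Lemma~\ref{lemma:alphabeta}, then forces $\sigma=\id$. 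For $\tau$ I would use the point of the construction: because $\bTT=\Sind$ when $m$ is odd and $\bTT=\Sind'$ when $m$ is even, the $\mu$-tableau entries coming from a single conjugate-semistandard tableau family all lie in one row of the $\nu$-tableau $\bTT$ when $m$ is odd and in one column when $m$ is even --- precisely the direction along which $\tau\in\RPP(\nu)$ (resp.\ $\tau\in\CPP(\nu)$) acts. As the members of a family are pairwise distinct and a conjugate-semistandard tableau is recovered from its column multisets, these entries have pairwise distinct columnar-tabloid number-data, so no non-trivial $\tau$ can leave the full system of columnar tabloids invariant; hence $\tau=\id$ as well.

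It remains to analyse the triples $(\theta,\id,\id)$. Such a triple contributes exactly when $\bTT\theta$ differs from $\bTT$ by some column-place permutation $\pi\in\CPP(\mu)^n$ of the $\mu$-entries together with a place permutation $\tau'$ of the positions of the $\nu$-tableau that preserves $\bTT$ --- equivalently, when $\theta$ lies in a certain subgroup $\Theta\le I(\lambda)$, which always contains the identity. Writing $\bTT\theta=\bTT\pp\tau'\star\pi$, one computes $\sgn(\theta)=\sgn(\tau')^m\sgn(\pi)$ by decomposing $\theta$ over the cycles of $\tau'$ (each cycle permuting blocks of $m$ symbols) and over the internal permutations $\pi$. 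On the other hand the sign absorbed when rewriting $\bTT\pp\tau'\star\pi$ as a multiple of $\bTT$ in the target module is $\sgn(\pi)\sgn(\tau')^{e}$, where $e=1$ when the $\nu$-correction $\tau'$ is a column permutation --- the case forced when $m$ is odd, since there it arises from the sign-twist in $\widetilde N/\widetilde C$ --- and $e=0$ when it is a row permutation, as happens when $m$ is even. Hence $\theta$ contributes $\sgn(\tau')^{m+e}$, and $m+e$ is even in every case that arises ($m+1$ when $m$ is odd; $m$ itself when $m$ is even and $e=0$), so every $\theta\in\Theta$ contributes $+1$. Therefore the coefficient equals $|\Theta|\ge 1$ and the homomorphism is non-zero.

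The main obstacle is the second paragraph: making rigorous that no non-trivial $\tau$ or $\sigma$ can conspire with a cleverly chosen $\theta$ to contribute to the $\bTT$-coefficient. This requires running the columnar-tabloid comparison simultaneously over all $\nu$-positions, tracking the number-only data through the renaming $\theta$ --- which is why one works with the pre-order on $\Omega^\lambda$ that forgets indices, so that Lemma~\ref{lemma:preorder} is applied with $\preceq$ itself only a pre-order --- and interlocking this with the distinctness of the family members (and the possibility that distinct families share tableaux, so that the correcting $\nu$-place permutation $\tau'$ may be non-trivial), all uniformly in the parity of $m$, with the direction of $\tau$ switching with that parity. The sign bookkeeping of the last paragraph is comparatively routine, but must be done with care, since the exponents of $\sgn(\tau')$ are exactly what the parity hypotheses in Theorems~\ref{thm:mainMaximal} and~\ref{thm:main} are arranged to control.
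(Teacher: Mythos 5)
Your proposal follows essentially the same route as the paper's proof: extract the coefficient of the basis element labelled by $\bTT$ from the explicit formula in Proposition~\ref{prop:homsExplicit}, force $\sigma=\id$ and then $\tau=\id$ by comparing deindexed columnar tabloids via Lemmas~\ref{lemma:alphabeta} and~\ref{lemma:preorder}, and finally show every surviving index permutation $\theta$ contributes $+1$ by the block-cycle sign computation (the paper's Lemma~\ref{lemma:sgns}), with the parity of $m$ entering exactly as you describe. The only caution is in the $\tau=\id$ step: it is not enough that no non-trivial $\tau$ fixes the system of columnar tabloids, since the correcting place permutation $\rho$ also moves positions; the paper instead uses that $\bTT$ is column-standard (resp.\ row-standard) for the pre-order on deindexed tableaux, so a non-trivial $\tau$ strictly lowers $\{\bTT\theta\pp\tau\}$ in $\prec_\row$ (resp.\ $\prec_\col$), and this strict decrease persists under $\star\pi$ and $\pp\rho$ --- which is the rigorous form of the interlocking you flag as the main obstacle.
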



The proof of Proposition~\ref{prop:nonzero} 
uses Lemma~\ref{lemma:preorder} and the three further lemmas below.
Given a $\mu$-tableau $t$ with entries from $\Omega^\lambda$ let
$\D(t)$ be the \emph{deindexed} tableau with entries from~$\N$ obtained by
removing indices from the entries in $t$. Let $\mathcal{N}=\mathcal{N}(\Omega^\lambda)$ be 
as defined in \S\ref{subsec:models_preliminary}.

\begin{lemma}\label{lemma:rhopi}
Let $\bT \in \NOmega$. 
Let $\pi$, $\pi' \in \CPP(\mu)^{\timesn}$ and let
$\rho$, $\rho' \in \RPP(\nu)$. 
If $(\bT \pp \rho) \star \pi = (\bT \pp \rho') \star \pi'$ then $\rho = \rho'$ and
$\pi = \pi'$.
\end{lemma}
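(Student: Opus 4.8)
The plan is to peel off the two place-permutation actions one at a time, starting from the rightmost column of $[\nu]$ and working back, exploiting that the entries of the $\mu$-tableaux in $\bT$ are \emph{distinct} symbols whose union is $\Omega^\lambda$. First I would record the key structural fact: since $\RPP(\nu)$ and $\CPP(\mu)^{\timesn}$ act, respectively, by place-permuting the $\mu$-tableau entries within rows of $[\nu]$ and by place-permuting entries within columns of each individual $\mu$-tableau, the \emph{set of entries} appearing in box $(i,j)$ of a $\nu$-tableau is unchanged by the action of any $\pi \in \CPP(\mu)^{\timesn}$ but is permuted (as a block, among the boxes in row $i$) by the action of $\rho \in \RPP(\nu)$. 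Because the $\mu$-tableaux occurring as entries of $\bT$ have pairwise disjoint entry sets (their union being all of $\Omega^\lambda$, with $|\Omega^\lambda| = mn$ symbols filling $n$ tableaux of size $m$), the assignment ``which entry-set sits in which box of $[\nu]$'' already determines $\rho$ uniquely. Hence from $(\bT \pp \rho)\star\pi = (\bT \pp \rho')\star\pi'$, comparing the entry set of each box $(i,j)$ on the two sides forces $\rho = \rho'$.

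With $\rho = \rho'$ in hand, I would cancel: $\bT\pp\rho$ now plays the role of a fixed tableau $\bU \in \NOmega$, and the hypothesis becomes $\bU \star \pi = \bU \star \pi'$, i.e. $(i,j)\bU \cdot \pi_{(i,j)} = (i,j)\bU \cdot \pi'_{(i,j)}$ for every $(i,j) \in [\nu]$. Here each $(i,j)\bU$ is a $\mu$-tableau with \emph{distinct} entries, so the place-permutation action of $\CPP(\mu)$ on it is free: $(i,j)\bU \cdot \pi_{(i,j)} = (i,j)\bU \cdot \pi'_{(i,j)}$ implies $\pi_{(i,j)} = \pi'_{(i,j)}$. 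Running this over all $(i,j) \in [\nu]$ gives $\pi = \pi'$, completing the proof.

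The only genuine subtlety — the step I would treat most carefully — is the very first one: justifying that the entry sets alone pin down $\rho$. This uses that the $\mu$-tableaux entries of $\bT$ are distinct (equivalently that $\bT$, built from a conjugate-semistandard tableau family tuple after appending distinct indices, has $\Omega^\lambda$ as the disjoint union of the entry sets of its $\mu$-tableaux), so that distinct boxes of $[\nu]$ carry distinct entry sets and no two can be confused after a row place-permutation. Everything after that is a freeness-of-action argument and is routine. I would phrase the writeup to make explicit that we are comparing entry sets box-by-box, to make the cancellation of $\rho$ transparent before passing to the $\CPP(\mu)^{\timesn}$ part.
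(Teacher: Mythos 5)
Your proposal is correct and follows essentially the same route as the paper: the disjointness of the entry sets of the $\mu$-tableaux in any element of $\NOmega$ pins down $\rho=\rho'$ by comparing entry sets box-by-box, and then $\pi=\pi'$ follows because the place-permutation action of $\CPP(\mu)$ on a $\mu$-tableau with distinct entries is free. The paper's proof is just a terser version of the same two steps.
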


\begin{proof}
For each $(i,j) \in [\nu]$, 
the $\mu$-tableau in position $(i,j)\rho$ of $(\bT \pp \rho) \star \pi$
has the same set of entries as the $\mu$-tableau in position $(i,j)\rho'$
of $(\bT \pp \rho') \star \pi'$.
Since the sets of entries of the $\mu$-tableaux in any element of $\NOmega$ are disjoint,
it follows that $(i,j)\rho = (i,j)\rho'$ for all $(i,j) \in [\nu]$. Hence
$\rho = \rho'$. It is now clear that $\pi = \pi'$.
\end{proof}

\begin{lemma}\label{lemma:RPP}
Let $t_1, \ldots, t_n$ be $\mu$-tableaux with entries in $\Omega^\lambda$ such that each $||\D(t_i)||$ is conjugate-semistandard.
Let $\sigma_i \in \RPP(\mu)$ for each $i$.
Suppose that the multisets
$\bigl\{ ||\D(t_1 \pp \sigma_1)||, \ldots, ||\D(t_n \pp \sigma_n)|| \bigr\}$
and $\bigl\{||\D(t_1)||, \ldots, ||\D(t_n)||\bigr\}$ are equal. 
Then $\sigma_1 = \ldots = \sigma_n = \id$.
\end{lemma}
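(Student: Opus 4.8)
The plan is to show that each $\sigma_i$ individually fixes the columnar tabloid $||t_i||$, and that this forces $\sigma_i=\id$ because the deindexed columns are strictly increasing along rows. First I would invoke Lemma~\ref{lemma:preorder}: the key point is that if $||\D(t_i)||$ is conjugate-semistandard, then $\D(t_i)$ (equivalently $t_i$, reading its columns in increasing order of the ambient total order on $\Omega^\lambda$) is row-standard with respect to an appropriate pre-order. Indeed, take $\preceq$ on $\Omega^\lambda$ to be the pre-order induced by comparing deindexed numbers; then $\D(t_i)$ row-standard means its rows are strictly increasing in $\N$, which is exactly the row-strictness half of conjugate-semistandardness. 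By Lemma~\ref{lemma:preorder}(i), if $\sigma_i\neq\id$ then $||\D(t_i)\pp\sigma_i||=||\D(t_i\pp\sigma_i)|| \prec_\col ||\D(t_i)||$, strictly.

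Next I would exploit the multiset equality. Since each $||\D(t_i\pp\sigma_i)|| \preceq_\col ||\D(t_i)||$ (with equality iff $\sigma_i$ fixes every column-multiset of $\D(t_i)$, which is not quite $\sigma_i=\id$ yet, but let me keep going), and the two multisets $\{||\D(t_i\pp\sigma_i)||\}$ and $\{||\D(t_i)||\}$ coincide, there is a bijection $\phi\in S_n$ with $||\D(t_{i\phi})|| = ||\D(t_i\pp\sigma_i)||$ for each $i$, hence $||\D(t_{i\phi})|| \preceq_\col ||\D(t_i)||$. Now apply Lemma~\ref{lemma:alphabeta} with $\alpha_i = ||\D(t_i\pp\sigma_i)||$ and $\beta_i = ||\D(t_i)||$ in the pre-ordered set of $\mu$-columnar tabloids under $\preceq_\col$: we have $\alpha_i\preceq_\col\beta_i$ for all $i$, and the permutation $\phi$ gives $\beta_{i\phi}\preceq_\col\alpha_i$; the lemma yields $\beta_i\preceq_\col\alpha_i$, so $\alpha_i$ and $\beta_i$ are $\preceq_\col$-equivalent for each $i$. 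Combined with the strict inequality above, this rules out $\sigma_i\neq\id$ in the cases where $\preceq_\col$-equivalence already forces equality; but I should be careful, since $\preceq_\col$ is only a pre-order, $\preceq_\col$-equivalence of $||\D(t_i\pp\sigma_i)||$ and $||\D(t_i)||$ does \emph{not} by itself imply $\sigma_i=\id$.

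So the final and main step is to upgrade $\preceq_\col$-equivalence to genuine equality, and then from equality of columnar tabloids to $\sigma_i=\id$. For the latter: the deindexed columns of $t_i$ need not be injective, but the \emph{indexed} columns are — the entries of $t_i$ lie in $\Omega^\lambda$ and are distinct — and by construction the indices were appended so that each column of $t_i$ is strictly increasing in the total order on $\Omega^\lambda$; a row place permutation $\sigma_i\in\RPP(\mu)$ that fixes $||\D(t_i)||$ permutes entries within rows, and if it fixes each deindexed column-multiset then, since within a column the deindexed values determine the indexed values (the indices being assigned in a fixed order down each column), it fixes $||t_i||$ itself, and since columns of $t_i$ have distinct entries this forces $\sigma_i=\id$. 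The subtlety to watch — and I expect this to be where the real argument lives — is precisely the gap between $\preceq_\col$-equivalence and equality of the $||\D(t_i)||$: I would close it by showing that along the $\phi$-orbit the colexicographic data on the rightmost differing column can only cycle through equal values, using that $\preceq$ restricted to a single column comparison is the genuine total order on multisets of $\N$ (colexicographic), so $\preceq_\col$-equivalence on that column is equality of multisets, and then induct inward on columns. Once all $||\D(t_i\pp\sigma_i)||=||\D(t_i)||$, the row-standardness/Lemma~\ref{lemma:preorder}(i) dichotomy gives $\sigma_i=\id$ for every $i$.
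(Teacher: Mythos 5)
Your argument coincides with the paper's proof through the application of Lemma~\ref{lemma:alphabeta}: both take $\preceq_\col$ induced by the usual order on $\N$, use Lemma~\ref{lemma:preorder}(i) to get $||\D(t_i \pp \sigma_i)|| \preceq_\col ||\D(t_i)||$, extract $\phi$ from the multiset equality, and conclude $||\D(t_i)|| \preceq_\col ||\D(t_i \pp \sigma_i)||$ for every $i$. At that point the paper stops, and so can you: by the definition of $\prec$ for a pre-order, the conclusion of Lemma~\ref{lemma:preorder}(i) for $\sigma_i \ne \id$ is precisely that $||\D(t_i \pp \sigma_i)|| \preceq_\col ||\D(t_i)||$ \emph{and} $||\D(t_i)|| \not\preceq_\col ||\D(t_i \pp \sigma_i)||$, so the relation you have just obtained from Lemma~\ref{lemma:alphabeta} contradicts it outright. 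No passage from $\preceq_\col$-equivalence to genuine equality of columnar tabloids is required.

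Your final paragraph is therefore a detour, and it is also the only place where the write-up is not watertight. The ``upgrade to equality'' is merely sketched, though it can be completed: both comparisons are decided on the same rightmost differing column, and mutual $\preceq_\col$-comparability there gives bijections between two multisets of integers that are weakly increasing in both directions, whence (summing) the multisets are equal, contradicting that the column differs. More seriously, the sub-argument that routes through the indexed tabloids $||t_i||$ invokes the indices being ``assigned in a fixed order down each column''; that is a feature of the tableaux $\bTT$ constructed in \S\ref{sec:homs}, not a hypothesis of this lemma, which concerns arbitrary $t_i$ with $||\D(t_i)||$ conjugate-semistandard. Fortunately your closing sentence (equality of the deindexed columnar tabloids together with Lemma~\ref{lemma:preorder}(i) forces $\sigma_i = \id$) already bypasses that sub-argument, so the detour can simply be deleted.
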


\begin{proof}
Let $\preceq$ be the usual total order on $\N$ and let $\preceq_\col$ be
the order induced by $\preceq$ 
on $\mu$-columnar tabloids with entries in $\N$, defined in Definition~\ref{defn:preorder}. 
By Lemma~\ref{lemma:preorder}(i), since each $D(t_i)$ is row-standard under $\preceq$,
we have $||D(t_i \pp \sigma_i)|| \preceq_\col ||D(t_i)||$ for each $i$.
By assumption, there exists $\phi \in S_n$ such that $||D(t_{i \phi})|| \preceq_\col ||D(t_{i} \pp \sigma_{i})||$
for all~$i$. By Lemma~\ref{lemma:alphabeta}, applied with the pre-order $\preceq_\col$,
we have $||D(t_i)|| \preceq_\col ||D(t_i \pp \sigma_i)||$ for all $i$. Hence, by
another application of Lemma~\ref{lemma:preorder}(i), $\sigma_i = \id$ for all $i$.
\end{proof}


\begin{lemma}\label{lemma:sgns}
Let $\psi \in I(\lambda)$, the group of index permutations, and
let $\bT \in \NOmega(\Omega^\lambda)$. 
If $\bT \psi \pp \rho = \bT$ where either $\rho \in \RPP(\nu)$ or $\rho \in \CPP(\nu)$ 
then 
\[ \sgn \psi = \begin{cases} 1 & \text{if $m$ is even} \\ \sgn \rho & \text{if $m$ is odd.} \end{cases}
\]
\end{lemma}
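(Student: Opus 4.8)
The plan is to analyze the effect of the index permutation $\psi$ on the symbols $y_x\in\Omega^\lambda$, exploiting that $\psi$ permutes only indices, preserving the number of each symbol. First I would record the following structural observation: if $\bT\psi\pp\rho=\bT$ with $\rho$ a row or column place permutation of $\nu$, then for each box $(i,j)\in[\nu]$ the $\mu$-tableau in position $(i,j)$ of $\bT$ and the one in position $(i,j)\rho^{-1}$ of $\bT$ have the same multiset of \emph{numbers} (forgetting indices), since $\psi$ fixes numbers. As the $\mu$-tableaux in any element of $\NOmega(\Omega^\lambda)$ have pairwise disjoint entry sets, the map $(i,j)\mapsto(i,j)\rho^{-1}$ pairs up boxes of $[\nu]$ whose $\mu$-tableau entries are "number-equal", and $\rho$ restricted to each such pairing class is a permutation. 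Within a single such class the corresponding $\mu$-tableaux differ only by a permutation of indices, and the action of $\psi$ realizes this.

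The key reduction is to the case where $\rho$ is a single cycle acting transitively on a set of boxes $(i,j_1),\dots,(i,j_r)$ (for $\rho\in\RPP(\nu)$; the $\CPP(\nu)$ case is symmetric, with the cycle inside a column of $[\nu]$). Let $t_1,\dots,t_r$ be the corresponding $\mu$-tableaux, so $\psi$ carries $t_1\mapsto t_2\mapsto\cdots\mapsto t_r\mapsto t_1$ on entries, up to the columnar-tabloid ambiguity absorbed into $\pp\rho$; more precisely $t_k\psi = t_{k+1}$ as sets of entries, and as functions after a column permutation. Since the $t_k$ partition their union into $r$ blocks of $m$ symbols each, and $\psi$ maps block $k$ bijectively to block $k+1$, the restriction of $\psi$ to the $rm$ symbols in $\bigcup_k\{\text{entries of }t_k\}$ is conjugate to an $r$-fold "shift" of a permutation of $m$ points. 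Concretely, choose the entry $(a,b)t_1=y_x$; then $(a,b)t_2,(a,b)t_3,\dots$ are obtained by following $\psi$, and after $r$ steps we return to a symbol with the same number $y$ (since $t_r\psi=t_1$ up to column permutation fixing numbers) but possibly a different index. Thus $\psi$ restricted to the $r$ symbols sharing a fixed "position orbit" through position $(a,b)$ is an $r$-cycle or a union of cycles whose lengths divide $r$; summing over the $m$ positions $(a,b)\in[\mu]$, the sign of $\psi$ on this block of $rm$ symbols equals $(\pm1)^{m}$ where the sign depends only on whether $r$-cycles (odd permutations when $r$ even) occur. Carrying this through, $\sgn\psi$ restricted to each pairing class equals $\sgn(\text{corresponding cycle of }\rho)^{m}$, because each $r$-cycle of $\rho$ contributes, via the $m$ positions of $[\mu]$, a factor of $\sgn(\text{$r$-cycle})^m=((-1)^{r-1})^m$. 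Multiplying over all cycles of $\rho$ gives $\sgn\psi=\sgn(\rho)^m$, which is $1$ if $m$ is even and $\sgn\rho$ if $m$ is odd.

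The main obstacle I anticipate is making precise the bookkeeping between the column-permutation ambiguity (the $\pp\rho$ versus genuine equality of tableaux as functions) and the index-cycle structure of $\psi$: one must check that the $\CPP(\mu)^{\times n}$-worth of freedom implicit in the columnar tabloids (which is present because $\bT$ lies in $\NOmega$, where $|\bT|$ is determined only up to column permutations when we pass to $\widetilde{N}$) does not alter the parity count. Since here $\bT$ is an honest $\nu$-tableau with honest $\mu$-tableau entries (not a columnar tabloid), the identity $\bT\psi\pp\rho=\bT$ is an equality of tableaux, so in fact $(i,j)(\bT\psi\pp\rho)=(i,j)\bT$ exactly for each $(i,j)$, i.e.\ $\bigl((i,j)\rho^{-1}\bigr)(\bT\psi)=(i,j)\bT$, which pins down $\psi$ on each symbol with no residual ambiguity; this should make the cycle-counting clean. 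A secondary check is the case distinction between $\rho\in\RPP(\nu)$ and $\rho\in\CPP(\nu)$: the argument is identical since both are Young subgroups of $S_{[\nu]}$ with orbits consisting of rows, respectively columns, of $[\nu]$, and the partition of $[\nu]$ into $\rho$-orbits is all that is used. Finally, I would note that $I(\lambda)$ acts faithfully and the orbits of $I(\lambda)$ on $\Omega^\lambda$ are exactly the sets of symbols sharing a fixed number, so the "follow $\psi$ within a position orbit" step is well-defined and lands in the right orbit at each stage.
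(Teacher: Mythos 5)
Your proposal is correct and follows essentially the same route as the paper: both identify $\psi$ (up to inversion) with the permutation of $\Omega^\lambda$ induced by $\rho$ via the exact position-by-position equality $\bigl((i,j)\rho^{-1}\bigr)(\bT\psi)=(i,j)\bT$, use disjointness of the entry sets of the $\mu$-tableaux to see that each $\ell$-cycle of $\rho$ yields exactly $m$ cycles of length $\ell$ in $\psi$, and conclude $\sgn\psi=\sgn(\rho)^m$. Your worry about columnar-tabloid ambiguity is a red herring which you correctly dispose of, and the hedge ``or a union of cycles whose lengths divide $r$'' never occurs for the same reason.
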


\begin{proof}
We suppose that $\rho \in \RPP(\nu)$: the other case is similar.
Let $\tilde{\rho}$ be the permutation of $\Omega^\lambda$ induced by $\rho$. 
By hypothesis $\tilde{\rho} = \psi^{-1} \in I(\lambda)$. If $x_y \tilde{\rho} = x_{z}$ with 
$y\not=z$ then
$x_y$ and $x_{z}$ are entries of distinct $\mu$-tableaux in the same row of $\bT$, say $s$ and $t$,
and $x'_{y'} \tilde{\rho}$ is an entry of $t$ whenever $x'_{y'}$ is an entry of $s$. Hence 
$\tilde{\rho}$ has exactly $m$ cycles of length $\ell$ for each $\ell$-cycle in $\rho$.
The lemma follows.
\end{proof}

We are now ready to prove Proposition~\ref{prop:nonzero}. We prove part (i) in full,
and then indicate the changes needed for (ii).

\begin{proof}[of Proposition~\ref{prop:nonzero}\emph{(i)}]
\newcommand{\pr}{\prime}
By Proposition~\ref{prop:homsExplicit}, we obtain a contribution to the coefficient of $\{\bTT\}+C$ 
in $|t_\lambda| \overline{f}_{(\mathcal{T}_1,\ldots,\mathcal{T}_c)}$ 
for each $\theta \in I(\lambda)$, $\tau \in \CPP(\nu)$ and $\sigma \in \RPP(\mu)^{\timesn}$
satisfying the condition $\{ (\bTT\theta \pp \tau) \star \sigma \} + C = \{ \bTT \} +C$. This condition
holds if and only if there exist $\pi \in \CPP(\mu)^{\timesn}$ and $\rho \in \RPP(\nu)$ such that
$\bigl( \bigl( (\bTT\theta \pp \tau) \star \sigma \bigr) \star \pi \bigr) \pp \rho = \bTT$.
If such $\pi$ and $\rho$ exist they are unique, by Lemma~\ref{lemma:rhopi}.
The coefficient is therefore
\[
\sum_{{{{{\theta \in I(\lambda) \atop \tau \in \CPP(\nu)} \atop \sigma \in \RPP(\mu)^{\timesn}} \atop 
\pi \in \CPP(\mu)^{\timesn}} \atop \rho \in \RPP(\nu)} \atop
((\bTTs \theta \pp \tau) \star \sigma) \star \pi) \cdot \rho = \bTTs}
 \hspace*{-12pt}\sgn(\theta)\sgn(\tau)\sgn(\pi) 
 \]
where  $\sgn(\pi) = \prod_{(i,j) \in [\nu]} \sgn(\pi_{(i,j)})$
is as defined in \S\ref{subsec:models_preliminary}.

Let $\theta,\tau,\sigma,\pi, \rho$ satisfy $((\bTT\theta \pp \tau) \star \sigma) \star \pi) \cdot \rho = \bTT$.
Let $t_{(i,j)} = (i,j)\bTT$. The tableau in position $(i,j)\tau$ of $\bTT\theta \pp \tau$
is $t_{(i,j)} \theta$. Since 
$\theta$ permutes indices, $\rho$ acts 
a position permutation on $\nu$-tableaux in $\mathcal{N}$ while leaving the multiset of 
$\mu$-tableaux entries invariant, and
$\pi$ acts as a column permutation on each $\mu$-tableau entry of a tableau in $\mathcal{N}$,
there is an equality of multisets
\begin{equation}
\bigl\{ ||\D(t_{(i,j)} \cdot \sigma_{(i,j)\tau})|| : (i,j) \in [\nu] \bigr\}
=  \bigl\{ ||\D(t_{(i,j)})|| : (i,j) \in [\nu] \bigr\}.
\label{eq:star}
\end{equation} 
Hence, by Lemma~\ref{lemma:RPP}, $\sigma_{(i,j)\tau} = \id$ 
for all $(i,j) \in [\nu]$, and so $\sigma = \id$.

Let $\preceq$ be the pre-order on $\mu$-tableaux with entries from $\Omega^\lambda$ defined by 
$s \preceq t$ if and only if $D(s) \le D(t)$, where $<$ is the total order in Definition~\ref{defn:colorder}.
The tableau $\bTT$ is column-standard under 
 $\preceq$.
Let $\preceq_\row$ be the pre-order on the set of tabloids $\{ \bT \}$ for $\bT \in \mathcal{N}$
defined by Definition~\ref{defn:preorder}(ii) using $\preceq$.
If $\tau\not= \id$ then, by Lemma~\ref{lemma:preorder}(ii), 
$\{ \bTT\theta \cdot \tau\} \prec_{\row} \{ \bTT \}$. Since the total order $<$
is preserved by column position permutations on $\mu$-tableaux
and $\preceq_{\row}$ depends only on the sets of entries in the rows of a tableau in $\mathcal{N}$,
it follows that
$\bigl\{  \bigl( (\bTT\theta \cdot \tau) \star \pi\bigr) \cdot \rho \bigr\} \prec_{\row} \{ \bTT \}$,
a contradiction. Therefore $\tau = \id$.

We now have $(\bTT \theta \star \pi) \cdot \rho = \bTT$. For each $(i,j) \in [\nu]$
there exists $(i,j') \in [\nu]$ such that $\bigl( (i,j)\bTT \theta \bigr) \cdot \pi_{(i,j)} =
(i,j')\bTT$.   Because the $\mu$-tableau entries of $\bTT$ are conjugate-semistandard after deindexing,
this shows that $\pi_{(i,j)}$  acts on the entry in position $(i,j)$ of $\bTT \theta$ by permuting indices.
Let $\tilde{\pi}_{(i,j)} \in I(\lambda)$
be the permutation induced by $\pi_{(i,j)}$ and let 
\[ \psi = \prod_{(i,j) \in [\nu]}
\theta\tilde{\pi}_{(i,j)}\in I(\lambda). \]
By Lemma~\ref{lemma:sgns} we have $\sgn \psi = 1$. It follows that $\sgn(\theta) = \sgn(\pi)$ and 
$\sgn(\theta)\sgn(\tau)\sgn(\pi) = \sgn(\theta)\sgn(\pi) = 1$. Hence the coefficient
of $\{ \bTT \}+C$ in $|t_\lambda|\overf$ is strictly positive.
\end{proof}

\begin{proof}[of Proposition~\ref{prop:nonzero}\emph{(ii)}]
A similar argument shows that the coefficient of $|\bTT|+\widetilde{C}$ in $t_\lambda
\overline{g}_{(\mathcal{T}_1,\ldots,\mathcal{T}_c)}$ is
\[
\sum_{{{{{\theta \in I(\lambda) \atop \tau \in \RPP(\nu)} \atop \sigma \in \RPP(\mu)^{\timesn}} \atop 
\pi \in \CPP(\mu)^{\timesn}} \atop \rho \in \CPP(\nu)} \atop
((\bTTs \theta \pp \tau) \star \sigma) \star \pi) \cdot \rho = \bTTs}
 \hspace*{-12pt}\sgn(\theta)\sgn(\pi) \sgn(\rho).
 \]
 Let $\theta,\tau,\sigma,\pi, \rho$ satisfy $((\bTT \theta \pp \tau) \star \sigma) \star \pi) \cdot \rho = \bTT$.
The argument for case~(i)
shows that~\eqref{eq:star} holds, and hence $\sigma_{(i,j)} = \id$ for all $(i,j) \in [\nu]$.
Let $\preceq$ be as defined in case~(i).
The tableau $\bTT$ is row-standard under the pre-order
 $\preceq$.  Hence by Lemma~\ref{lemma:preorder}(i),
if $\tau\not= \id$ then $|| \bTT\theta \cdot \tau|| \prec_{\col} || \bTT ||$.
A similar argument
now shows that
$\bigl|\bigl|  \bigl( (\bTT\theta \cdot \tau) \star \pi\bigr) \cdot \rho \bigr|\bigr| \prec_{\col} ||\bTT ||$,
a contradiction. Therefore $\tau = \id$. Now define $\psi$ as before, and apply Lemma~\ref{lemma:sgns}
to get $\sgn(\theta) \sgn(\pi) = \sgn(\psi) = \sgn(\rho)$, 
 showing that the coefficient
of $|\bTT|+\widetilde{C}$ in $|t_\lambda|\overf$ is strictly positive.
\end{proof}

Under certain technical hypotheses the
 homomorphisms $\overf$ and $\overg$
 for distinct conjugate-semistandard tableau family tuples
$\mathcal{T}$ are linearly independent; this is discussed in Section~\ref{sec:indep}.

\section{Conjugate-semistandard tableau families tuples from homomorphisms}
\label{sec:toTuple}

The aim of this section is to prove the following theorem.

\begin{theorem}\label{thm:toFamily}
Let $q$ be the number of conjugate-semistandard tableau family tuples of shape $\mu^\kappa$
and type $\lambda$. Then 
\[ \dim \Hom_{\Q S_{mn}}\bigl( S^\lambda, (S^\mu \oslash S^\nu)\Indmn 
\bigr) \le q.\]
\end{theorem}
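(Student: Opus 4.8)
The plan is to bound $\dim\Hom_{\Q S_{mn}}(S^\lambda, (S^\mu \oslash S^\nu)\Indmn)$ by constructing an injection from a basis of this Hom-space into the set of conjugate-semistandard tableau family tuples of shape $\mu^\kappa$ and type $\lambda$. The natural device is the canonical surjection $\widetilde M^\lambda \twoheadrightarrow S^\lambda$: since $(S^\mu \oslash S^\nu)\Indmn$ is a summand of $(M^\mu \oslash \widetilde M^\nu)\indmn$ (after an appropriate sign twist depending on the parity of $m$; recall $\kappa = \nu'$ when $m$ even and $\kappa=\nu$ when $m$ odd), and using the models of \S\ref{sec:models} identifying this with $\widetilde N/\widetilde C$ or $N/C$, any homomorphism $S^\lambda \to (S^\mu\oslash S^\nu)\Indmn$ pulls back to a homomorphism $\widetilde M^\lambda \to \widetilde N/\widetilde C$ killing $\ker(\widetilde M^\lambda \to S^\lambda)$, hence (by irreducibility and multiplicity considerations) the composites $S^\lambda \hookrightarrow \widetilde M^\lambda / \ker \to \cdots$ span a space of the same dimension as the original Hom-space. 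So it suffices to bound the dimension of the image of $\Hom_{\Q S_{mn}}(S^\lambda, -)$ inside $\Hom_{\Q S_{mn}}(\widetilde M^\lambda, -)$, or more concretely to analyze where the generator $|t_\lambda|$ can be sent.

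First I would reduce to studying $\Q S_{\Omega^\lambda}$-homomorphisms $\Phi : \widetilde M^\lambda \to \widetilde N/\widetilde C$ (the $m$ odd case; $m$ even is symmetric via the sign twist and the models $N/C$) whose kernel contains the kernel of the canonical map to $S^\lambda$. Such a $\Phi$ is determined by $\Phi(|t_\lambda|)$, which must be fixed up to sign by the column stabilizer structure of $t_\lambda$, i.e.\ killed by $b_\lambda$-type relations, and must lie in the image of $\widetilde S^\lambda$, equivalently be annihilated by the Garnir relations. Using the basis of $\widetilde N/\widetilde C$ provided by Proposition~\ref{prop:models}(iv) — columnar tabloids indexed by $\bT \in \NR_\col$ with all $\mu$-tableau entries row-standard — I would expand $\Phi(|t_\lambda|)$ in this basis and, among the basis elements $|\bT| + \widetilde C$ appearing with nonzero coefficient, select a canonical ``leading term'' with respect to a suitable order (the colexicographic/majorization order on the deindexed $\mu$-tableau entries from Definitions~\ref{defn:colorder} and \ref{defn:majorizationTableaux}, combined with an order on the $\nu$-positions). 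The key point is that the constraints from $b_\lambda$ (the index-permuting group $I(\lambda)$ acts with sign) together with the conjugate-semistandardness forced by row-standardness of entries and the column-standardness $<_\i$ will pin down the deindexed leading tableau $\D(\bT)$ to be a $\kappa$-tableau whose entries, grouped by row, form conjugate-semistandard $\mu$-tableau families, i.e.\ a conjugate-semistandard tableau family tuple of shape $\mu^\kappa$; its type will be forced to be $\lambda$ because the multiset of all deindexed entries is governed by $\lambda'$ (as in \S\ref{subsec:tableaux}).

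The main obstacle — and the technical heart of the proof — is showing that the assignment $\Phi \mapsto$ (leading tableau family tuple) is \emph{injective} on a basis of the Hom-space: two linearly independent homomorphisms must have distinct leading tuples. This requires a triangularity argument: I would order the conjugate-semistandard tableau family tuples of type $\lambda$ and shape $\mu^\kappa$ linearly, show that each such tuple $\mathcal T$ determines a distinguished basis element $|\bTT| + \widetilde C$ of $\widetilde N/\widetilde C$ (via the indexing procedure of \S\ref{subsec:tableaux}), and prove that if $\Phi(|t_\lambda|)$ has leading term supported at $\mathcal T$ then for every tuple $\mathcal S$ strictly below $\mathcal T$ in the order the coefficient of $|\bS| + \widetilde C$ in $\Phi(|t_\lambda|)$ is zero or determined by higher terms — so the Hom-space maps injectively into the span indexed by these tuples. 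Establishing this triangularity means carefully tracking how $I(\lambda)$, the Garnir relations, and the $\RPP(\mu)^{\timesn}/\CPP(\mu)^{\timesn}$ actions interact with the chosen order; the combinatorial lemmas already in hand — Lemma~\ref{lemma:alphabeta}, Lemma~\ref{lemma:preorder}, Lemma~\ref{lemma:RPP}, Lemma~\ref{lemma:sgns} — are exactly the tools needed, so the work is in assembling them into a clean order-preservation statement rather than in proving new facts. Once injectivity is in place the bound $\dim \Hom \le q$ is immediate.
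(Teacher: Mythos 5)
Your overall shape (pull back along $\widetilde{M}^\lambda \twoheadrightarrow S^\lambda$, expand the image of $|t_\lambda|$ in an explicit basis, read off tableau family tuples, count) matches the paper's, but the two steps that carry the mathematical content are missing. First, you assert that the constraints from $b_\lambda$ ``will pin down'' the deindexed entries to be conjugate-semistandard and the tableaux within each family to be distinct, and you claim that Lemmas~\ref{lemma:alphabeta}, \ref{lemma:preorder}, \ref{lemma:RPP} and \ref{lemma:sgns} are the tools needed. They are not: those lemmas are the toolkit for Proposition~\ref{prop:nonzero} (non-vanishing of the explicit homomorphisms), a different direction entirely. What is actually required here is an analysis of $e_\col(\bT)b_\lambda$, and the paper needs a genuinely new result for this --- Proposition~\ref{prop:Pieri}, a ``modular Pieri rule'' giving a basis of $S^\mu b$ in terms of vertical strips, which forces each surviving $\mu$-tableau entry to be \emph{separated} (no repeated number in a row, hence strictly increasing rows after deindexing); distinctness within a column is then a separate sign-cancellation argument ($e_\col(\bT)b_\lambda = 0$ when $\bT$ has a column repeat). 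Related to this, your choice of model is wrong in a way that matters: you work in $\widetilde{N}/\widetilde{C}$ and cite Proposition~\ref{prop:models}(iv) for a basis indexed by $\NR_\col$, but that basis is for $\widetilde{N}/\widetilde{R}$; more importantly, the Pieri-type analysis needs each $\mu$-tableau entry to contribute a genuine polytabloid $e(t)\in S^\mu$, which is why the paper works in the submodule of $\widetilde{N}/\widetilde{R}$ spanned by the plethystic polytabloids $\ecol(\bS)$, $\bS\in\NS_\col$ (Lemma~\ref{lemma:basis}), rather than in $\widetilde{N}/\widetilde{C}$.

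Second, your injectivity step is framed as a leading-term/triangularity argument whose proof you defer (``the work is in assembling them into a clean order-preservation statement''); as it stands this is an unestablished claim, and it is not clear the triangularity you describe even holds in that form. The paper avoids it entirely: once Proposition~\ref{prop:toFamily} is in place, any two basis elements $\ecol(\bT)$, $\ecol(\bU)$ with $D_\col(\bU)=D_\col(\bT)$ satisfy $\bU = \bT\theta\cdot\tau$ with $\theta\in I(\lambda)$, $\tau\in\CPP(\nu)$, and applying $\theta$ to the expansion of $|t_\lambda|h$ forces $\sgn(\theta)c_\bU = \sgn(\tau)c_\bT$. So \emph{all} coefficients are determined by one free parameter per tuple, giving $\dim\Hom \le q$ with no ordering of tuples and no leading-term extraction. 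I would recommend replacing the triangularity plan with this direct coefficient-determination argument, and supplying a proof of the separatedness and column-distinctness claims before treating the bound as established.
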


The proof uses the results in \S\ref{sec:models}, taking
the set $\Omega$ to be $\Omega^\lambda$ with its usual order, namely
$y_{x} < y'_{x'}$ if and only if $y < y'$ or both $y=y'$ and $x < x'$.
Recall that, by Definition~\ref{defn:NS}, $\NS$ is the set of all $\nu$-tableaux
with standard $\mu$-tableaux entries such that the union of all the entries
of the $\mu$-tableaux is $\Omega^\lambda$, $\NS_\row$ is the subset
of $\NS$ of $\nu$-tableaux 
with strictly increasing rows under the order $<_\i$ (defined in Section~\ref{subsec:models}),
and $\NS_\col$ is the subset of $\NS$ of $\nu$-tableaux 
with strictly increasing columns under $<_\i$.
Let $\mathcal{B}_\row$ and $\mathcal{B}_\col$ be the bases of 
the submodules of $N/R$ and $\widetilde{N}/\widetilde{R}$
isomorphic to $(S^\mu \oslash M^\nu)\indmn$ and $(S^\mu \oslash \widetilde{M}^\nu)\indmn$, respectively, 
given in Lemma~\ref{lemma:basis}.
Recall  $\mathcal{B}_\row$ consists of all
\[ \erow(\bS) = \sum_{\pi \in \CPP(\mu)^{\timesn} } \{ \bS \star \pi \} \sgn(\pi) + R \]
where $\bS \in \NS_\row$ 
and $\mathcal{B}_\col$ consists of all
\[ \ecol(\bS) = \sum_{\pi \in \CPP(\mu)^{\timesn} } | \bS \star \pi | \sgn(\pi) + \widetilde{R} \]
where $\bS \in \NS_\col$. 

The key idea in the proof of Theorem~\ref{thm:toFamily}, when $m$ is even, is to 
pull back a $\Q S_{mn}$-homomorphism $S^\lambda \rightarrow
(S^\mu \oslash S^\nu) \indmn$ to a $\Q S_{\Omega^\lambda}$-homomorphism
$h : \widetilde{M}^\lambda \rightarrow \widetilde{N}/\widetilde{R}$
with image contained in the submodule of $\widetilde{N}/\widetilde{R}$ with
basis~$\mathcal{B}_\col$ isomorphic to $(S^\mu \oslash \widetilde{M}^\nu) \indmn$.
We show that if $|t_\lambda| h = \sum_{\bS \in \NS_\col} c_\bS \ecol(\bS)$
then there is a conjugate-semistandard tableau family tuple of shape $\mu^\kappa$
and type~$\lambda$ corresponding to each $\bS$ with $c_\bS \not= 0$.
If $m$ is odd we obtain the same result by replacing $\widetilde{N}/\widetilde{R}$
with $N/R$ and using the submodule with basis~$\mathcal{B}_\row$.
To make this correspondence precise we require the definitions below. Recall
from~\S\ref{sec:nonzero} that if $t$ is a $\mu$-tableau with entries
from $\Omega^\lambda$ then the deindexed tableau $D(t)$ is the $\mu$-tableau with entries from $\N$ obtained
by removing indices from the entries~of~$t$. 

\begin{definition}
Let $\bT \in \NOmega(\Omega^\lambda)$.
\begin{defnlist}
\item Suppose $m$ is even. For each $j \in \{1,\ldots, \nu_1\}$,
let $\mathcal{T}_j = \{D(t_1), \ldots, D(t_{\nu_j'})\}$ where $t_1,\ldots,t_{\nu_j'}$
are the entries in column $j$ of $\bT$.
Define $D_\col(\bT) = (\mathcal{T}_1,\ldots,\mathcal{T}_{\nu_1})$.

\item Suppose $m$ is odd. For each $i \in \{1,\ldots, \nu'_1\}$,
let $\mathcal{T}_i = \{D(t_1), \ldots, D(t_{\nu_i})\}$ where $t_1,\ldots,t_{\nu_i}$
are the entries in row $i$ of $\bT$.
Define $D_\row(\bT) = (\mathcal{T}_1,\ldots,\mathcal{T}_{\nu'_1})$.
\end{defnlist}
\end{definition}

\begin{definition}\label{def:sep}
Let $t$ be a $\mu$-tableau with entries from $\Omega^\lambda$. We say that
$t$ is \emph{separated} if no two distinct symbols $y_x$ and $y_{x'}$ lie in the
same row of $t$.
\end{definition}

Theorem~\ref{thm:toFamily} is an easy corollary of the
following proposition and its analogue for $m$ odd.

\begin{proposition}\label{prop:toFamily}
Suppose $m$ is even.
Let $h : \widetilde{M}^\lambda \rightarrow (S^\mu \oslash \widetilde{M}^\nu) \indmn$
be a homomorphism of $\Q S_{mn}$-modules. Identifying the domain
with the $\Q S_{\Omega^\lambda}$-module generated by $|t_\lambda|$ and the
 codomain with the submodule
of $\widetilde{N}/\widetilde{R}$ with basis $\mathcal{B}_\col$, let
\begin{equation}\label{eq:th} |t_\lambda|h = \sum_{\bU \in \R} c_\bU \ecol(\bU) \end{equation}
where $\R \subseteq \NS_\col$ and $c_\bU\not = 0$ for each $\bU \in \R$.
\begin{thmlist}
\item If $\bU \in \R$ and $t$ is a $\mu$-tableau entry of $\bU$ then $t$ is separated.
\item If $\bU \in \R$ and $s$ and $t$ are  $\mu$-tableau entries in different positions
in the same column of $\bU$
then
$D(s) \not= D(t)$.
\end{thmlist}
\end{proposition}



For completeness, we state the version of the proposition for $m$ odd.

\begin{proposition}\label{prop:toFamilyOdd}
Suppose $m$ is odd.
Let $h : \widetilde{M}^\lambda \rightarrow (S^\mu \oslash M^\nu) \indmn$
be a homomorphism of $\Q S_{mn}$-modules. 
Identifying the domain
with the $\Q S_{\Omega^\lambda}$-module generated by $|t_\lambda|$ and 
 the
 codomain with the submodule
of $ N/ R$ with basis $\mathcal{B}_\row$, let
\begin{equation}\label{eq:th2} |t_\lambda|h = \sum_{\bU \in \R} c_\bU \erow(\bU) \end{equation}
where $\R \subseteq \NS_\row$ and $c_\bU\not = 0$ for each $\bU \in \R$. 
\begin{thmlist}
\item If $\bU \in \R$ and $t$ is a $\mu$-tableau entry of $\bU$ then $t$ is separated.
\item If $\bU \in \R$ and $s$ and $t$ are  $\mu$-tableau entries in different positions
in the same row of $\bU$
then $D(s) \not= D(t)$.
\end{thmlist}
\end{proposition}

%

\begin{proof}[of Theorem~\ref{thm:toFamily} assuming Propositions~\ref{prop:toFamily} and~\ref{prop:toFamilyOdd}]
We first suppose that $m$ is even, then indicate the very minor changes needed if $m$ is odd.
Pull back a non-zero homomorphism
$\smash{S^\lambda \rightarrow (S^\mu \oslash S^\nu) \indmn}$ to 
a non-zero homomorphism
$\smash{\widetilde{M}^\lambda \rightarrow (S^\mu \oslash \widetilde{M}^\nu) \indmn}$,
and let $h : \widetilde{M}^\lambda \rightarrow \widetilde{N}/\widetilde{R}$ 
be the corresponding homomorphism of $\Q S_{\Omega^\lambda}$-modules
with image contained in the submodule of $\widetilde{N}/\widetilde{R}$
with basis $\{ \ecol(\bS) : \bS \in \NS_\col \}$. Let $|t_\lambda|h 
 = \sum_{\bS \in \R} c_\bS \ecol(\bS)$, as in~\eqref{eq:th}.

Let $\bT \in \R$.
If $t$ is a $\mu$-tableau entry of
$\bT$, then $t$ is standard, so if $y_x, y'_{x'}$ are entries in the same
row of $t$ then $y_x < y'_{x'}$. Moreover, by Proposition~\ref{prop:toFamily}(i),
we have $y\not=y'$. Therefore the rows of $D(t)$ are strictly increasing.
Similarly, since $t$ is standard, the columns of $D(t)$ are weakly increasing.
It now follows from Proposition~\ref{prop:toFamily}(ii) that
$D_\col(\bT)$ is a conjugate-semistandard tableau family tuple
of shape $\mu^\kappa$. (Recall that $\kappa = \nu'$ when $m$ is even.)

Suppose that  $\bU \in \NS_\col$ is such that
 $D_\col(\bU) = D_\col(\bT)$.
 There is an index permutation
$\theta \in I(\lambda)$ and a column place permutation 
$\tau \in \CPP(\nu)$ such that $\bU = \bT \theta \cdot \tau $.
Observe that $e_\col(\bU) = e_\col(\bT \theta \cdot \tau) = \sgn(\tau) e_\col(\bT \theta)$. 
Applying $\theta$ to both sides of~\eqref{eq:th}, we see from the
summand $c_\bT e_\col(\bT) \theta$ that the coefficient of 
$e_\col(\bU)$ in $|t_\lambda|\theta h$ is $\sgn(\tau) c_\bT$. Since
$|t_\lambda|\theta$ = $\sgn(\theta)|t_\lambda|$, it follows
that $\sgn(\theta) c_\bU = \sgn(\tau) c_\bT$.
Hence the number of independent choices for the coefficients in~\eqref{eq:th}
is at most~$q$, the number of conjugate-semistandard tableau family tuples of shape $\mu^{\kappa}$
and type~$\lambda$.
%
%
Therefore
\hbox{$\dim \Hom_{\Q S_\Omega} (\widetilde{M}^\lambda, \langle \mathcal{B}_\col \rangle) \le q$}.
Since
\[ \dim \Hom_{\Q S_{mn}} \bigl( \widetilde{M}^\lambda, (S^\mu \oslash S^\nu) \Indmn \bigr) \le
\dim \Hom_{Q \Omega^\lambda} (\widetilde{M}^\lambda, \langle \mathcal{B}_\col \rangle) \]
the theorem follows.

For $m$  odd, instead start with a non-zero homomorphism $S^\lambda \rightarrow
(S^\mu \oslash \widetilde{S}^\nu) \indmn$ and pull it back
to a non-zero homomorphism $\widetilde{M}^\lambda \rightarrow (S^\mu
\oslash \widetilde{M}^\nu)\indmn$ 
so that Proposition~\ref{prop:toFamilyOdd} may be used.
 Then replace $\ecol(\bS) \in \NS_\col$ with $\erow(\bS) \in \NS_\row$,
$\mathcal{B}_\col$ with $\mathcal{B}_\row$ 
and $D_\col(\bT)$ with $\D_\row(\bT)$ throughout, and use that  $\erow(\bT\theta \cdot \tau)=  
\erow(\bT)\theta$ for $\tau \in \RPP(\nu)$ and $\theta \in I(\lambda)$. 
\end{proof}

\subsection{Proof of Proposition~\ref{prop:toFamily}}

The main result needed to 
prove Proposition~\ref{prop:toFamily} 
(and also the analogous Proposition~\ref{prop:toFamilyOdd}) 
is Proposition~\ref{prop:Pieri}
below; this result may be seen as a modular version of Pieri's rule.  
An example is given following the proof. 
Recall that a subset of a Young diagram is said to be a \emph{vertical strip}
if it contains no two boxes in the same row.

\begin{proposition}\label{prop:Pieri}
Let $\Gamma$ be a subset of $\Omega^\lambda$ of size $m$. Let
\[ b = \sum_{\theta \in S_\Gamma \cap I(\lambda)} \theta \sgn(\theta). \]
Let $d = \lambda_1$.
For each $y \in \{1,\ldots, d\}$, let $\Gamma_y = \{ y_1, \ldots, y_{\lambda_y'} \} \cap \Gamma$.
The elements $e(t)b$ such that
\begin{thmlist}
\item $t$ is a standard $\mu$-tableau with entries from $\Gamma$,
\item the boxes of $t$ occupied by the symbols in each~$\Gamma_y$ form
a vertical strip, 
\item if $y_x$, $y_{x'} \in \Gamma_y$ and $x < x'$ then
$y_x$ appears in a lower numbered row than $y_{x'}$,
\end{thmlist}
are a basis for the $\Q S_\Gamma$-module $S^\mu b$.
\end{proposition}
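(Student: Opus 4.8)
The plan is to combine a dimension count, a bijection, and a leading‑term argument. First I would record that the subgroup $S_\Gamma\cap I(\lambda)$ is the Young subgroup $G:=\prod_{y=1}^{d}S_{\Gamma_y}$ of $S_\Gamma$, of composition type $\beta:=(|\Gamma_1|,\dots,|\Gamma_d|)$, and that $b=\sum_{\theta\in G}\sgn(\theta)\theta$ satisfies $b^2=|G|\,b$, so that $|G|^{-1}b$ acts on every $\Q S_\Gamma$-module as the projection onto its $\sgn_G$-isotypic part. Hence $\dim S^\mu b=\langle\sgn_G,\,S^\mu\res_G\rangle=\langle S^\mu,\,\sgn_G\ind_G^{S_\Gamma}\rangle$ by Frobenius reciprocity, and by the sign‑twisted form of Young's rule (as used in the proof of Lemma~\ref{lemma:Mtilde}) this equals the Kostka number $K_{\mu',\beta}$, which is the number of conjugate‑semistandard $\mu$-tableaux of content $\beta$.

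Next I would check that $t\mapsto\D(t)$ is a bijection from the set of $\mu$-tableaux with entries from $\Gamma$ satisfying (i)--(iii) onto the set of conjugate‑semistandard $\mu$-tableaux of content $\beta$: conditions (i) and (ii) make the rows of $\D(t)$ strictly increasing and (i) makes the columns weakly increasing, while content $\beta$ is immediate; conversely, in a conjugate‑semistandard tableau the boxes holding a given value lie in distinct rows, so such a tableau of content $\beta$ admits exactly one re‑indexing by the symbols of $\Gamma$ satisfying (iii), and this re‑indexing is automatically standard. Together with the first paragraph this shows that the number of tableaux $t$ in the statement equals $\dim S^\mu b$; since every $e(t)b$ lies in $S^\mu b$, it then suffices to prove that the $e(t)b$ for the listed $t$ are linearly independent.

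For this I would first describe $M^\mu b$: if two entries of a $\mu$-tableau $s$ with the same number lie in one row, then the transposition of those two entries lies in $G$ and fixes $\{s\}$, forcing $\{s\}b=0$; otherwise $\{s\}b$ is determined up to sign by the $G$-orbit of $s$, which contains a unique member satisfying (ii) and (iii), and these distinguished elements $\{s\}b$ form a basis of $M^\mu b$. I would then expand $e(t)b=\sum_{\tau\in\CPP(\mu)}\sgn(\tau)\{t\pp\tau\}b$ in this basis. When $\tau$ preserves the partition of $[\mu]$ into the level sets of $\D(t)$, one has $\D(t\pp\tau)=\D(t)$ with each value occupying the same rows as before, so $t\pp\tau=t\theta$ for some $\theta\in G$ with $\sgn(\theta)=\sgn(\tau)$, whence $\sgn(\tau)\{t\pp\tau\}b=\sgn(\tau)\sgn(\theta)\{t\}b=\{t\}b$; thus the coefficient of $\{t\}b$ in $e(t)b$ is the positive number of such $\tau$. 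For every other $\tau$, since the columns of $\D(t)$ are weakly increasing the tabloid $\{\D(t\pp\tau)\}$ is strictly below $\{\D(t)\}$ in the dominance order, so that term is either $0$ or $\pm\{s'\}b$ with $\{\D(s')\}$ strictly below $\{\D(t)\}$. Since a conjugate‑semistandard tableau with strictly increasing rows is determined by its row sets, distinct listed tableaux $t$ have distinct tabloids $\{\D(t)\}$; ordering the basis of $M^\mu b$ compatibly with the dominance order on these deindexed tabloids exhibits $(e(t)b)_{t\text{ listed}}$ as a triangular family with nonzero diagonal entries, hence linearly independent. Therefore it is a basis of $S^\mu b$.

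The step I expect to be the main obstacle is the triangularity computation in the third paragraph: one must argue carefully that a column place permutation $\tau$ shuffling only boxes that carry equal values of $\D(t)$ — which, as $\D(t)$ is conjugate‑semistandard, form contiguous blocks within columns — returns precisely $\{t\}b$ after re‑indexing, using conditions (ii) and (iii) together with the cancellation $\sgn(\tau)\sgn(\theta)=\sgn(\tau)^2=1$ coming from the fact that $\theta$ is conjugate to $\tau$, and that every remaining $\tau$ genuinely lowers the deindexed tabloid. Managing the signs and identifying exactly which $\tau$ are level‑set preserving is where the real work lies; given this, the conclusion is a routine leading‑term argument.
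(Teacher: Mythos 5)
Your proposal is correct, and its first two steps coincide in substance with the paper's: the dimension count via Frobenius reciprocity (your Kostka number $K_{\mu',\beta}$ is exactly the paper's count of vertical-strip sequences obtained from iterated dual Pieri), and the deindexing bijection between tableaux satisfying (i)--(iii) and conjugate-semistandard $\mu$-tableaux of content $\beta$ is the paper's bijection between its sets $T$ and $S$. Where you genuinely diverge is the linear-independence step. The paper stays inside $S^\mu$: using $e(t)\theta = e(t\theta)$ and a column sort it writes $e(t)b$ as $C\sum_u c_u e(u)$ with $c_u = \pm 1$, where every standard $u$ occurring has the same set of entries in each vertical strip as $t$; distinct $t$ determine distinct vertical-strip decompositions, so the expansions have disjoint supports in the standard basis and independence is immediate, with no order on tabloids needed. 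You instead expand $e(t)b$ in a signed-orbit-sum basis of $M^\mu b$ and run a triangularity argument with respect to a dominance order on deindexed tabloids, separating the level-set-preserving column place permutations (which contribute $+\{t\}b$ after the sign cancellation $\sgn(\tau)\sgn(\theta)=1$) from the rest (which strictly lower the deindexed tabloid). Both arguments work; yours requires the extra bookkeeping of identifying $M^\mu b$ and verifying the strict descent for non-level-set-preserving $\tau$ (which does hold, by the usual prefix argument for weakly increasing columns, in the spirit of Lemma~\ref{lemma:preorder}), whereas the paper's disjoint-support observation is shorter. On the other hand your version makes the leading coefficient and its positivity explicit, which is closer in flavour to the machinery used in \S\ref{sec:nonzero}. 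Either route completes the proof.
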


\begin{proof}
By repeated applications of 
Pieri's rule (in the dual form stated in \cite[page 340]{StanleyII}), the multiplicity of $\chi^\mu$ in 
$\bigl(\prod_{y=1}^d \sgn_{\Gamma_y} \bigr)\Ind^{S_\Gamma}$ is the number of 
sequences of partitions
\begin{equation} 
\varnothing = \mu_0 \subset \mu_1 \subset \ldots \subset \mu_{d-1} \subset \mu_d = \mu \label{eq:vbs}
\end{equation}
such that each $[\mu_y] / [\mu_{y-1}]$ is a vertical strip and $|\mu_y| - |\mu_{y-1}| = |\Gamma_y|$.
Let~$S$ be the set of such sequences. 
Define $\rule{0pt}{12pt}\widehat{b} = b \,/ \prod_{y=1}^d |\Gamma_y|$. Note that $\widehat{b}$ is an idempotent and
that if $v \in S^\mu \widehat{b}$ then $\langle v \rangle$ affords the sign representation
of $\prod_{y=1}^d \Gamma_y$. By Frobenius reciprocity, we see that $\dim S^\mu \widehat{b} = |S|$.

Given a sequence in $S$ as in~\eqref{eq:vbs}, let $t$ be the standard $\mu$-tableau with entries from $\Gamma$
such that, for each $y$, 
the boxes $[\mu_y] / [\mu_{y-1}]$ are occupied by the elements of $\Gamma_y$, ordered as in condition
(iii).
Let $T$ be the set of tableaux obtained in this way.
Notice that if $t \in T$ and $\theta \in S_\Gamma \cap I(\lambda)$ then the tableau~$u$ obtained
from $t \theta$ by sorting its columns into increasing order is standard. 
Let $\theta_u$ be the unique permutation such that \hbox{$t \theta \theta_u = u$}.
If $\theta' \in S_\Gamma \cap I(\lambda)$ gives the same tableau $u$
after sorting the columns of $t\theta'$, then
$\theta \theta_u = \theta' \theta'_u$, so $\sgn(\theta) \sgn(\theta_u) = \sgn(\theta')\sgn(\theta'_u)$.
Hence  there exists $C \in \N$ such that the expansion
of $e(t) b$ in the standard basis of $S^\mu$ is 
\[ e(t) b = C \sum_{u} c_u e(u), \]
where the sum is over all standard $\mu$-tableaux $u$ with entries from $\Gamma$
such that the sets of entries
of $u$ and $t$ in each vertical strip $[\mu_y] / [\mu_{y-1}]$ are equal, and
$c_u \in \{ +1, -1\}$ for each $u$. Hence
the $e(t) b$ for $t \in T$ are linearly independent. Since $\dim S^\mu b = \dim S^\mu \widehat{b} 
= |S| = |T|$,
they form a basis for~$S^\mu b$.
\end{proof}

For example, if $\Gamma = \Omega^{(3,2)} = \{1_1,1_2,2_1,2_2,3_1\}$, 
then
\[ b_{(3,2)} = \sum_{\theta \in S_{\Gamma} \cap I(\lambda)} \theta \sgn(\theta) = 
\bigl(1 - (\oa,\ob) \bigr)\bigl(1-(\ta,\tb)\bigr),\] 
and
\[
\makeatletter
\def\y@vr{\vrule height0.7\y@b@xdim width\y@linethick depth 0.3\y@b@xdim}
\def\y@setdim{%
  \ify@autoscale%
   \ifvoid1\else\typeout{Package youngtab: box1 not free! Expect an
     error!}\fi%
   \setbox1=\hbox{A}\y@b@xdim=1.8\ht1 \setbox1=\hbox{}\box1%
  \else\y@b@xdim=\y@boxdim \advance\y@b@xdim by -2\y@linethick
  \fi}
\makeatother
 \setlength{\arrayrulewidth}{.08em}
 S^{(3,2)}b_{(3,2)} = \Bigl< e \Bigl(\, \young(\oa\ta\da,\ob\tb)\, \Bigr) \Bigr>. \]
If instead $\mu = (2,2,1)$ then
\[
\makeatletter
\def\y@vr{\vrule height0.7\y@b@xdim width\y@linethick depth 0.3\y@b@xdim}
\def\y@setdim{%
  \ify@autoscale%
   \ifvoid1\else\typeout{Package youngtab: box1 not free! Expect an
     error!}\fi%
   \setbox1=\hbox{A}\y@b@xdim=1.8\ht1 \setbox1=\hbox{}\box1%
  \else\y@b@xdim=\y@boxdim \advance\y@b@xdim by -2\y@linethick
  \fi}
\makeatother
 \setlength{\arrayrulewidth}{.08em}
 S^{(2,2,1)}b_{(3,2)} = \Bigl< e\Bigl(\, \young(\oa\ta,\ob\tb,\da)\, \Bigr),
\  e\Bigl(\, \young(\oa\ta,\ob\da,\tb)\, \Bigr) -  e\Bigl(\, \young(\oa\tb,\ob\da,\ta) \, \Bigr) \Bigr>. \]
The first two tableaux above form the set $T$ defined in the proof; correspondingly,
Pieri's rule implies the multiplicity of the 
character of $S^{(2,2,1)}$ in $(\sgn_{S_2} \times \sgn_{S_2})\ind^{S_5}$ is $2$.

We remark that using the James--Peel filtration of a Specht module by Specht modules
labelled by skew-partitions (see \cite[\S 3]{JamesPeel}) 
one can adapt the proof of Proposition~\ref{prop:Pieri} to avoid assuming
Pieri's rule; the rule then follows as a corollary.

\begin{proof}[of Proposition~\ref{prop:toFamily}]
We have $|t_\lambda| b_\lambda = C |t_\lambda|$ where $C = |I_\lambda|$.
Hence the image of the homomorphism $h$ is contained in the subspace of $\rule{0pt}{11pt}
\widetilde{N}/\widetilde{R}$ 
spanned by the $e_\col(\bT) b_\lambda$ for $\bT \in \NS_\col$.

Let $\bT \in \NS_\col$. For each $(i,j) \in [\nu]$, let $\Gamma_{(i,j)}$ be the
set of entries of the $\mu$-tableau $(i,j)\bT$, let $I(\lambda)_{(i,j)} =
S_{\Gamma_{(i,j)}} \cap I(\lambda)$ and let
\[ b_{(i,j)} = \sum_{\theta \in S_{I(\lambda)_{(i,j)}}}\! \theta \sgn(\theta).\] 
Choosing coset representatives $\phi_1, \ldots, \phi_e$ 
for $\prod_{(i,j) \in [\nu]} I(\lambda)_{(i,j)}$ in $I(\lambda)$, we see that
\[ e_\col(\bT) b_\lambda = \sum_{k=1}^e e_\col(\bT) \prod_{(i,j) \in [\nu]} b_{(i,j)} \phi_k \sgn(\phi_k). \]
Each $b_{(i,j)}$ acts on $\bT$ by permuting the entries of the $\mu$-tableau
in position $(i,j)$ of $\bT$, so by Proposition~\ref{prop:Pieri}, 
$e_\col(\bT) \prod_{(i,j)} b_{(i,j)}$ is a linear combination of $e_\col(\bU)$ for $\bU \in \NS_\col$
such that each $\mu$-tableau entry of $\bU$ is separated.
Since the coset representatives $\phi_k$ permute indices while leaving numbers
fixed, we have $e_\col(\bT) b_\lambda = \sum_{\bU \in \NS_\col} c_\bU e_\col(\bU) $
where if $c_\bU \not=0$ then each $\mu$-tableau entry of $\bU$ is separated.
It follows that
\[ \langle e_\col(\bT) b_\lambda  : \bT \in \NS_\col \rangle \subseteq W \]
where $W$ is the subspace of $\widetilde{N}/\widetilde{R}$ spanned by the
$e_\col(\bU)$ for $\bU \in \NS_\col$ such that each $\mu$-tableau entry of $\bU$ is separated.

To show that condition (ii) in Proposition~\ref{prop:toFamily} holds, it will
be convenient to say that
$\bT \in \NS_\col$ has a \emph{column repeat}
if there exist distinct
$(i,j)$ and $(i',j) \in [\nu]$ such
that the $\mu$-tableau entries $s = (i,j)\bT$ and $t = (i',j)\bT$ of $\bT$
satisfy $D(s) = D(t)$.
Suppose that $\bT \in \NS_\col$ has a column repeat, described by this notation, and let
\[ \psi = \prod_{(a,b) \in [\mu]} \bigl((a,b)s, (a,b)t \bigr). \]
By hypothesis, $\psi \in I(\lambda)$.
Observe that $s \psi = t$, $t\psi = s$, and that $\psi$ leaves all other
$\mu$-tableau entries of $\bT$ fixed. 
Therefore $|\bT \psi| = -|\bT|$. More generally, if $\pi \in \CPP(\mu)^\timesn$ then,
since the index permuting
action of $I(\lambda)$ commutes with place permutations,  we have
\[ |\bT \star \pi| (1 + \psi) = |\bT \star \pi| + |\bT \psi \star \pi| = 
|\bT \star \pi| - |\bT \star \pi' | \]
where $\pi' \in \CPP(\mu)^\timesn$ is obtained from $\pi$ by swapping
the permutations indexed by $(i,j)$ and $(i',j)$. Hence,
taking coset representatives $\theta_1, \ldots, \theta_d$ for $\langle \psi \rangle \le I(\lambda)$,
and noting that $\sgn(\psi) = 1$ since $m$ is even, we get
\begin{align*} e_\col(\bT) b_\lambda &= 
\!\!\!\sum_{\pi \in \CPP(\mu)^\timesn} \!\! |\bT \star \pi| (1 + \psi) \sum_{k=1}^d \theta_k \sgn(\theta_k)  \\
&= \!\!\!\sum_{\pi \in \CPP(\mu)^\timesn} \!\!\bigl( |\bT \star \pi| - |\bT \star \pi'| \bigr)
 \sum_{k=1}^d \theta_k \sgn(\theta_k)  \\
&= \!\!\!\sum_{\pi \in \CPP(\mu)^\timesn} \!\! \bigl( |\bT \star \pi| \bigr) \sum_{k=1}^d \theta_k \sgn(\theta_k) 
-  \!\!\!
\sum_{\pi \in \CPP(\mu)^\timesn} \!\! \bigl( |\bT \star \pi| \bigr) \sum_{k=1}^d \theta_k \sgn(\theta_k)\\
&= 0.
\end{align*}
It therefore suffices to show that if $\bT$ has separated $\mu$-tableau
entries, $\bT$ does not have a column repeat
and $\e_\col(\bU)$ appears in the expansion of $e_\col(\bT) b_\lambda$ 
in the basis $\NS_\col$, then $\bU$ does not have a column repeat.

Let $\theta \in I(\lambda)$. 
For each $(i,j) \in [\nu]$, let $v_{(i,j)}$ be the $\mu$-tableau obtained from
$(i,j)\bT \theta$ by sorting its columns into increasing order. Since $(i,j)\bT$ is a
separated $\mu$-tableau,
each $v_{(i,j)}$ is standard. Let $\bV$ be defined by $(i,j)\bV = v_{(i,j)}$. 
Sorting the columns of $\bV$ into increasing order in the order $<_\i$ 
we obtain $\bU \in \NS_\col$ such that $e_\col(\bT) \theta =  \pm e_\col(\bU)$.
Since $\theta$ permutes indices while leaving numbers fixed, we have
\[ \bigl\{ D\bigl( (1,j)\bT \bigr), \ldots, D\bigl( (\nu_j', j) \bT \bigr)  \bigr\}
= \bigl\{ D\bigl( (1,j)\bU \bigr), \ldots, D\bigl( (\nu_j', j) \bU \bigr) \bigr\} \]
for each $j \in \{1,\ldots,\nu_1\}$.
Therefore $\bU$ does not have a column repeat, as required.
\end{proof}

\begin{proof}[of Proposition~\ref{prop:toFamilyOdd}]
If $m$ is odd then the proof should be modified by replacing columns with rows.
In the second step of the proof we have 
$\{\bT\} \psi = \{\bT\}$ and $\sgn(\psi) = -1$, since $m$ is odd,
 and a similar argument shows that $e_\row(\bT)b_\lambda = 0$.
\end{proof}

\section{Proofs of Theorems~\ref{thm:mainMaximal} and~\ref{thm:main}}
\label{sec:proof}

Theorem~\ref{thm:main}
follows easily  using the following  lemma which summarises the salient points from  Proposition~\ref{prop:nonzero} and Theorem~\ref{thm:toFamily}.

\vbox{
\begin{lemma}{\ }
\begin{thmlist}
\item If $S^\lambda$ is a summand of $H_\mu^\nu$ then there is a conjugate-semistandard 
tableau family tuple of shape $\mu^\kappa$ and type $\lambda^\star$
with $\lambda \unrhd \lambda^\star$.

\item If there is a conjugate-semistandard tableau family tuple of shape $\mu^\kappa$
and type $\lambda^\star$ then there is a summand $S^{\lambda^\dagger}$
of $H_\mu^\nu$ with $\lambda^\star \unrhd \lambda^\dagger$.
\end{thmlist}
\end{lemma}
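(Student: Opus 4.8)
The plan is to assemble the two substantial results already established in the paper: Theorem~\ref{thm:toFamily} for part~(i) and Proposition~\ref{prop:nonzero} for part~(ii). Beyond these, all that is needed is semisimplicity of $\Q S_{mn}$, Lemma~\ref{lemma:Mtilde}, and the isomorphism $\widetilde{S}^\gamma \cong S^\gamma$ of $\Q S_r$-modules recorded in \S\ref{subsec:Specht}.

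For part~(i): if $S^\lambda$ is a summand of $H_\mu^\nu = (S^\mu \oslash S^\nu)\indmn$, then the inclusion of that summand is a non-zero $\Q S_{mn}$-homomorphism, so $\Hom_{\Q S_{mn}}\bigl(S^\lambda, (S^\mu \oslash S^\nu)\indmn\bigr) \not= 0$. Theorem~\ref{thm:toFamily} then forces $q \ge 1$, where $q$ is the number of conjugate-semistandard tableau family tuples of shape $\mu^\kappa$ and type $\lambda$; taking $\lambda^\star = \lambda$ gives the claim (with the inequality $\lambda \unrhd \lambda^\star$ holding trivially).

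For part~(ii): let $(\mathcal{T}_1,\ldots,\mathcal{T}_c)$ be a conjugate-semistandard tableau family tuple of shape $\mu^\kappa$ and type $\lambda^\star$. By Proposition~\ref{prop:nonzero} the associated homomorphism is non-zero: it is $\overline{f}_{(\mathcal{T}_1,\ldots,\mathcal{T}_c)} : \widetilde{M}^{\lambda^\star} \to (\widetilde{S}^\mu \oslash S^\nu)\indmn$ when $m$ is even (so $\kappa = \nu'$), and $\overline{g}_{(\mathcal{T}_1,\ldots,\mathcal{T}_c)} : \widetilde{M}^{\lambda^\star} \to (\widetilde{S}^\mu \oslash \widetilde{S}^\nu)\indmn$ when $m$ is odd (so $\kappa = \nu$). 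Since $\widetilde{S}^\mu \cong S^\mu$ as $\Q S_m$-modules and $\widetilde{S}^\nu \cong S^\nu$ as $\Q S_n$-modules, functoriality of $\oslash$ in both arguments together with functoriality of induction identifies the codomain in each case with $(S^\mu \oslash S^\nu)\indmn = H_\mu^\nu$. Thus there is a non-zero $\Q S_{mn}$-homomorphism $\widetilde{M}^{\lambda^\star} \to H_\mu^\nu$. Decomposing the semisimple module $H_\mu^\nu$ into irreducibles, some Specht module summand $S^{\lambda^\dagger}$ of $H_\mu^\nu$ receives a non-zero map from $\widetilde{M}^{\lambda^\star}$, so $S^{\lambda^\dagger}$ is itself a summand of $\widetilde{M}^{\lambda^\star}$; Lemma~\ref{lemma:Mtilde} then yields $\lambda^\star \unrhd \lambda^\dagger$, as required.

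The entire substance of the argument is imported from Theorem~\ref{thm:toFamily} and Proposition~\ref{prop:nonzero}, whose proofs occupy \S\S\ref{sec:homs}--\ref{sec:toTuple}; the assembly above presents no genuine obstacle. The only points requiring care are the split according to the parity of $m$ (with the correct value of $\kappa$ and the correct shape $\mu^{\nu'}$ or $\mu^\nu$), and the harmless replacement of $\widetilde{S}^\mu$ and $\widetilde{S}^\nu$ by $S^\mu$ and $S^\nu$ so that the codomains of $\overline{f}$ and $\overline{g}$ are recognised as $H_\mu^\nu$.
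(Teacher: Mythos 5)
Your proof is correct and follows essentially the same route as the paper: part~(i) is immediate from Theorem~\ref{thm:toFamily} with $\lambda^\star=\lambda$, and part~(ii) combines Proposition~\ref{prop:nonzero} with semisimplicity and Lemma~\ref{lemma:Mtilde}. Your explicit identification of the codomains of $\overline{f}$ and $\overline{g}$ with $H_\mu^\nu$ via $\widetilde{S}^\gamma\cong S^\gamma$ is a detail the paper leaves implicit, but the argument is the same.
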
}

\begin{proof}
Using Theorem~\ref{thm:toFamily}, the hypothesis of part (i) implies that there is at least one 
conjugate-semistandard  tableau family tuple of shape $\mu^\kappa$ and type $\lambda$. We may therefore take $\lambda^\star$ to equal $\lambda$ here and the conclusion holds.
For (ii), by Proposition~\ref{prop:nonzero} there is a non-zero homomorphism
from $\widetilde{M}^{\lambda^\star}$ into $H_\mu^\nu$.
The result now follows
since, by Lemma~\ref{lemma:Mtilde}, if $S^{\lambda^\dagger}$ is a summand of $\widetilde{M}^{\lambda^\star}$
then $\lambda^\star \unrhd \lambda^\dagger$.
\end{proof}

This completes the proof of Theorem~\ref{thm:main}.
To deduce Theorem~\ref{thm:mainMaximal} we twist by the sign representation.
The restriction of $\sgn_{S_{mn}}$ to $S_m \wr S_n$ is
$\sgn_{S_m} \oslash\,\, \Q_{S_n}$ if $m$ is even and $\sgn_{S_m} \oslash \sgn_{S_n}$ if $m$ 
is odd. Hence
\[ H_\mu^\nu\, \otimes\, \sgn_{S_{mn}} \!\! = (S^\mu \oslash S^\nu) \indmn 
\otimes \sgn_{S_{mn}} \!\cong \!
\begin{cases}
(S^{\mu'} \oslash S^\nu)\indmn &\! \text{if $m$ is even} \\
(S^{\mu'} \oslash S^{\nu'})\indmn & \!\text{if $m$ is odd.} 
\end{cases} \]
It now follows from~\eqref{eq:sgntwist} in \S\ref{subsec:Specht} that
$S^\lambda$ is a maximal summand of $H_\mu^\nu$ if and only if $S^{\lambda'}$ is a minimal summand of $H_{\mu'}^\nu$, if $m$ is even, or $H_{\mu'}^{\nu'}$, if $m$ is odd. By Theorem~\ref{thm:main} this 
holds (in either case)
 if and only if $\lambda'$ is a minimal type, that is $\lambda$ is maximal among the partitions that occur as a weight  of a conjugate-semistandard tableau family tuple of shape $\eta^{\nu'}$ where $\eta = \mu'$.

We remark that, using Proposition~\ref{prop:type} (whose proof was not included in this article) and Lemma~\ref{lemma:minimalimpliesclosed}, the statement of Theorem~\ref{thm:mainMaximal} may be strengthened.  We may remove the restriction that  $\lambda$ is maximal \emph{among the partitions} that occur as a weight and conclude instead that $\lambda$ is simply a maximal weight of the conjugate-semistandard tableau family tuple.

\section{Linear independence of homomorphisms} \label{sec:indep}
In this section we  extend the argument in the proof of Proposition~\ref{prop:nonzero} to give a sufficient condition for
homomorphisms defined using different conjugate-semistandard
tableau family tuples of the same type to be linearly independent.

 To simplify the statement of this result we write $t \in \mathcal{T}$ to mean
that the tableau $t$ belongs to one of the tableau families in the conjugate-semistandard 
tableau family tuple $\mathcal{T}$.
Let $\preceq_\col$ denote the order on $\mu$-columnar tabloids
defined in the proof of Lemma~\ref{lemma:RPP}, obtained 
by taking $\preceq$ to be the usual order on $\N$ in Definition~\ref{defn:preorder}.
Recall that
$\kappa = \nu'$ if $m$ is even and $\kappa = \nu$
if $m$ is odd.

\begin{proposition}\label{prop:linindep}
Let $\mathcal{T}^{(1)}, \ldots, \mathcal{T}^{(d)}$ be conjugate-semistandard
tableau family tuples of shape $\mu^\kappa$ and type $\lambda$.
Suppose that for each $e \in \{1,\ldots, d\}$ there exists a conjugate-semistandard
tableau $s^{(e)} \in \mathcal{T}^{(e)}$ such that 
if $c < e$ and $u \in \mathcal{T}^{(c)}$ then $||u|| \prec_\col ||s^{(e)}||$.
\begin{thmlist}
\item If $m$ is even then 
the homomorphisms
$\overline{f}_{\mathcal{T}^{(1)}}, \ldots, \overline{f}_{\mathcal{T}^{(d)}}
: S^\lambda \rightarrow H_\mu^\nu$ are linearly independent. 
\item If $m$ is odd then
the homomorphisms
$\overline{g}_{\mathcal{T}^{(1)}}, \ldots, \overline{g}_{\mathcal{T}^{(d)}}
: S^\lambda \rightarrow H_\mu^\nu$ are linearly independent. 
\end{thmlist}
\end{proposition}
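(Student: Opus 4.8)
The plan is to show directly that no nontrivial linear combination of the maps $\overline{f}_{\mathcal{T}^{(1)}},\ldots,\overline{f}_{\mathcal{T}^{(d)}}$ (in case (i); the $\overline{g}_{\mathcal{T}^{(e)}}$ in case (ii)) vanishes. This suffices: any linear relation among the induced homomorphisms $S^\lambda\to H_\mu^\nu$ pulls back along the canonical surjection $\widetilde{M}^\lambda\twoheadrightarrow S^\lambda$ to the same relation among the maps $\widetilde{M}^\lambda\to(\widetilde{S}^\mu\oslash S^\nu)\indmn$ of \S\ref{subsec:homs}, and since $|t_\lambda|$ generates $\widetilde{M}^\lambda$ it is enough to see that $\sum_{e=1}^d a_e\bigl(|t_\lambda|\overline{f}_{\mathcal{T}^{(e)}}\bigr)\ne 0$ whenever some $a_e\ne 0$. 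For each $e$ let $\bT^{(e)}\in\mathcal{N}(\Omega^\lambda)$ be the $\nu$-tableau constructed from $\mathcal{T}^{(e)}$ in \S\ref{subsec:tableaux} and used to define $\overline{f}_{\mathcal{T}^{(e)}}$. First I would recall from the proof of Proposition~\ref{prop:nonzero} that, realising the target inside $N/C$, the coefficient of the basis element $\{\bT^{(e)}\}+C$ in $|t_\lambda|\overline{f}_{\mathcal{T}^{(e)}}$ is a strictly positive integer. The argument is then a triangularity argument with these witnesses, taken in the order in which the tuples are listed.

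The crucial step — and the only place the hypothesis on the $s^{(e)}$ is used — is to prove that for $c<e$ the coefficient of $\{\bT^{(e)}\}+C$ in $|t_\lambda|\overline{f}_{\mathcal{T}^{(c)}}$ is zero. By Proposition~\ref{prop:homsExplicit} and the uniqueness statement in Lemma~\ref{lemma:rhopi}, a nonzero such coefficient would provide $\theta\in I(\lambda)$, $\tau\in\CPP(\nu)$, $\sigma\in\RPP(\mu)^{\timesn}$, $\pi\in\CPP(\mu)^{\timesn}$ and $\rho\in\RPP(\nu)$ with $\bigl(\bigl((\bT^{(c)}\theta\pp\tau)\star\sigma\bigr)\star\pi\bigr)\pp\rho=\bT^{(e)}$. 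Exactly as in the proof of Proposition~\ref{prop:nonzero}, where this invariance was isolated in~\eqref{eq:star}, $\theta$ permutes only indices and $\tau,\sigma,\pi,\rho$ act by place permutations (on the boxes of $[\nu]$ or within the columns of the $\mu$-entries), so applying the deindexing map $D$ and passing to the columnar tabloid of each $\mu$-entry yields an equality of multisets
\[ \bigl\{\,||D(t)|| : t\text{ an entry of }\bT^{(c)}\,\bigr\}=\bigl\{\,||D(t)|| : t\text{ an entry of }\bT^{(e)}\,\bigr\}. \]
Since each $\mu$-entry of $\bT^{(\bullet)}$ deindexes to a conjugate-semistandard $\mu$-tableau, and such a tableau is recovered from its columnar tabloid by sorting each column into increasing order, this says precisely that the multiset of conjugate-semistandard $\mu$-tableaux appearing in the families of $\mathcal{T}^{(c)}$ equals the one for $\mathcal{T}^{(e)}$. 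In particular $s^{(e)}$, which lies in one of the families of $\mathcal{T}^{(e)}$, then lies in one of the families of $\mathcal{T}^{(c)}$, i.e. $s^{(e)}\in\mathcal{T}^{(c)}$; applying the hypothesis with $c<e$ and $u=s^{(e)}$ gives $||s^{(e)}||\prec_\col||s^{(e)}||$, which is impossible since $\prec_\col$ is irreflexive. This proves the step, and I expect the passage from ``$\bT^{(c)}$ and $\bT^{(e)}$ are related by the index and place actions'' to ``their underlying multisets of conjugate-semistandard tableaux agree'' to be the only point genuinely requiring care — everything else rests on Proposition~\ref{prop:nonzero}.

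To finish I would suppose $\sum_{e=1}^d a_e\overline{f}_{\mathcal{T}^{(e)}}=0$ and take $e_0$ maximal with $a_{e_0}\ne 0$. In $\sum_{e=1}^d a_e\bigl(|t_\lambda|\overline{f}_{\mathcal{T}^{(e)}}\bigr)$ the coefficient of $\{\bT^{(e_0)}\}+C$ gets no contribution from $e>e_0$ (where $a_e=0$), none from $e<e_0$ (by the step just proved), and a strictly positive multiple of $a_{e_0}$ from $e=e_0$; hence it is nonzero, contradicting the relation, so all $a_e=0$ and (i) follows. For (ii) I would run the identical argument with $\overline{g}_{\mathcal{T}^{(e)}}$ in place of $\overline{f}_{\mathcal{T}^{(e)}}$, the witness $|\bT^{(e)}|+\widetilde{C}$ in place of $\{\bT^{(e)}\}+C$, and the roles of $\RPP(\nu)$ and $\CPP(\nu)$ on the $\nu$-shape interchanged exactly as between the two parts of the proof of Proposition~\ref{prop:nonzero}; the multiset identity above, and hence the contradiction with the hypothesis, is obtained in the same way.
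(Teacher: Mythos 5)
Your overall strategy is the paper's: fix the witnesses $\{\bT^{(e)}\}+C$ (resp.\ $|\bT^{(e)}|+\widetilde{C}$), use the strictly positive diagonal coefficients from Proposition~\ref{prop:nonzero}, and kill the off-diagonal coefficients by triangularity. The gap is in the step you yourself flag as the delicate one. From $\bigl(\bigl((\bT^{(c)}\theta\pp\tau)\star\sigma\bigr)\star\pi\bigr)\pp\rho=\bT^{(e)}$ you cannot conclude that the multisets of columnar tabloids $||\D(t)||$ of the entries of $\bT^{(c)}$ and of $\bT^{(e)}$ coincide, because $\sigma\in\RPP(\mu)^{\timesn}$ acts by place permutations within the \emph{rows} of the $\mu$-entries, not within their columns (your parenthetical in effect conflates $\sigma$ with $\pi$). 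A row place permutation moves entries between columns and so changes $||\D(\cdot)||$. What actually follows, exactly as in~\eqref{eq:star}, is the equality of $\bigl\{||\D(t_{(i,j)})||\bigr\}$ (entries of $\bT^{(e)}$) with $\bigl\{||\D(u_{(i,j)}\pp\sigma_{(i,j)\tau})||\bigr\}$ (entries of $\bT^{(c)}$, each twisted by its row place permutation). In Proposition~\ref{prop:nonzero} the twist is removed by Lemma~\ref{lemma:RPP}, but that lemma compares a tableau's entries with their \emph{own} $\sigma$-images; here the two sides of the equality come from different tableaux, so it gives neither $\sigma=\id$ nor your untwisted identity. Hence your deduction that the underlying multisets of conjugate-semistandard tableaux of $\mathcal{T}^{(c)}$ and $\mathcal{T}^{(e)}$ agree, and so that $s^{(e)}\in\mathcal{T}^{(c)}$, is unjustified.

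The repair is short and is what the paper does: keep the twist and apply Lemma~\ref{lemma:preorder}(i) to get $||\D(u_{(i,j)}\pp\sigma_{(i,j)\tau})||\preceq_\col||\D(u_{(i,j)})||$; since $c<e$ the hypothesis gives $||\D(u_{(i,j)})||\prec_\col||s^{(e)}||$, so by transitivity every member of the $\bT^{(c)}$-side multiset is $\prec_\col||s^{(e)}||$ and in particular is not equal to $||s^{(e)}||$, whereas $||s^{(e)}||$ does occur in the $\bT^{(e)}$-side multiset. That is the contradiction. The rest of your argument --- the reduction to the generator $|t_\lambda|$, the positive diagonal coefficient, the choice of $e_0$ maximal with $a_{e_0}\neq 0$, and the row/column interchange for $m$ odd --- is correct and matches the paper.
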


\begin{proof}
We give the proof for $m$ even and then explain the minor changes needed if $m$ is odd.

For each $c \in \{1,\ldots, d\}$, let $\bT^{(c)}$ be the tableau in $\NOmega(\Omega^\lambda)$ 
corresponding to~$\mathcal{T}^{(c)}$. 
Suppose for a contradiction that there is a linear dependency involving
$\overline{f}_{\mathcal{T}^{(1)}}, \ldots, \overline{f}_{\mathcal{T}^{(e)}}$
in which the coefficient of $\overline{f}_{\mathcal{T}^{(e)}}$ is non-zero.
The proof of Proposition~\ref{prop:nonzero} shows that the coefficient of $\{\bT^{(e)}\} + C$
in $|t_\lambda|\overline{f}_{\mathcal{T}^{(e)}}$ is non-zero. 
Using the definition of $\overline{f}_{\mathcal{T}^{(c)}}$ in Proposition~\ref{prop:homsExplicit}, we see that there exists $c < e$ 
and $\theta \in I(\lambda)$, $\tau \in \CPP(\nu)$ and $\sigma \in \RPP(\mu)^n$
such that
$\{ \bT^{(e)} \} + C = \pm \{ (\bT^{(c)} \theta \cdot \tau) \star \sigma \} + C$.
Hence there exist $\pi \in \CPP(\mu)^n$ and $\rho \in \RPP(\nu)$ such that
\[ \bT^{(e)} = (((\bT^{(c)} \theta \cdot \tau) \star \sigma)\star \pi)\cdot \rho. \]
Let $t_{(i,j)} = (i,j)\bT^{(e)}$ and $u_{(i,j)} = (i,j)\bT^{(c)}$
for $(i,j) \in [\nu]$.

Repeating the argument in the first step
in the proof of Proposition~\ref{prop:nonzero}, we get
\[ \{ ||D(t_{(i,j)})|| : (i,j) \in [\nu] \} = \{||D(u_{(i,j)} \cdot \sigma_{(i,j)\tau})|| :
(i,j) \in [\nu]\}. \]
But by Lemma~\ref{lemma:preorder}(i), we have
$||D(u_{(i,j)}  \cdot \sigma_{(i,j)\tau})|| \preceq_\col ||D(u_{(i,j)})|| \prec_\col ||D(s^{(e)})||$.
By transitivity, $||D(u_{(i,j)} \cdot \sigma_{(i,j)\tau})|| \prec_\col ||D(s^{(e)})||$; in particular,
$||D(u_{(i,j)}) \cdot \sigma_{(i,j)\tau})|| \not= ||D(s^{(e)})||$.
Since $||D(s^{(e)})||$ is a member of the left-hand set, this is a contradiction.

If $m$ is odd then instead we look at the coefficient
of $|\bT^{(e)}| + \widetilde{C}$, and 
instead take $\tau \in \RPP(\nu)$ and $\rho \in \CPP(\nu)$. The proof is otherwise unchanged.
\end{proof}

\begin{example}\label{ex:excont}
Let 
\[ s_1 = \young(12,1)\,,\ s_2 = \young(12,2)\,,\ s_3 = \young(12,3)\,,\ s_4 = \young(13,1)\,,\ s_5 = \young(13,2)\,. \]
Define $\mathcal{S} = \bigl( \{s_1,s_2,s_3,s_4\}, \{s_1\} \bigr)$.
Note that $\mathcal{S}$ has the same shape and type as $\mathcal{T}
= \bigl( \{s_1,s_2,s_4,s_5\},\{s_1\} \bigr)$ from Example~\ref{ex:ex}.
Since $||s_5||$ is the greatest element of $\{||s_1||,\ldots, ||s_5||\}$
in the order $\preceq_\col$ used in Proposition~\ref{prop:linindep},
the homomorphisms $\overline{g}_\mathcal{S}$ and $\overline{g}_\mathcal{T}$
are linearly independent. Hence
the multiplicity of $S^{(3^2,2^3,1^3)}$ in $H_{(2,1)}^{(4,1)}$ is at least $2$.

If $(\mathcal{U}_1, \mathcal{U}_2)$ is a conjugate-semistandard tableau family tuple
of shape $(2,1)^{(4,1)}$ of minimal type then, by Lemma~\ref{lemma:minimalimpliesclosed},
$\mathcal{U}_1$ and $\mathcal{U}_2$ are closed. Hence $\mathcal{U}_2 = \{ s_1\}$ and
$\mathcal{U}_1$ appears in the table in Figure~2. 
 Therefore $\mathcal{S}$ and $\mathcal{U}$
are the only conjugate-semistandard tableau family tuples of shape $(2,1)^{(4,1)}$
and type $(3^2,2^3,1^3)$. Thus
Theorem~\ref{thm:toFamily} implies that
the multiplicity of $S^{(3^2,2^3,1^3)}$
in $H_{(2,1)}^{(4,1)}$ is at most $2$.
Therefore the multiplicity is exactly~$2$.
\end{example}

To motivate a further example, we remark on one obvious source of linearly dependent homomorphisms.
Suppose that $n$ is even and that
$\mathcal{U}$ and~$\mathcal{V}$ are conjugate-semistandard tableau families  of shape $\mu^{r}$
where $r = n/2$.
Let~$\lambda$ be the type of 
$(\mathcal{U}, \mathcal{V})$ 
and let $\bT_{(\mathcal{U},\mathcal{V})}$ and $\bT_{(\mathcal{V}, \mathcal{U})}$ 
be the corresponding elements of
$\NOmega(\Omega^\lambda)$.
By the definition in \S\ref{sec:homs}, $\bT_{(\mathcal{V},\mathcal{U})}$ 
is obtained from 
$\bT_{(\mathcal{U}, \mathcal{V})}$ by swapping the two
$\mu$-tableaux in each row, if $m$ is even, and in each column,
if $m$ is odd.
It now follows easily from Proposition~\ref{prop:homsExplicit} that when $m$ is even the
 homomorphisms corresponding to $(\mathcal{U}, \mathcal{V})$ and $(\mathcal{V}, \mathcal{U})$
 from $S^\lambda$ into $\smash{(M^\mu \oslash S^{(2^{r})})\indmn}$ are equal.
 If $m$ is odd the homomorphisms from $S^\lambda$ into $(M^\mu \oslash \widetilde{S}^{(r,r)})\indmn$ 
agree up to a sign of $(-1)^{r}$. In either case, they are linearly dependent.

\begin{example}\label{ex:big}
Define
\[ \exu = \young(12,4)\,,\ \exv = \young(23,2)\,,\ \exw = \young(13,3) \,,\  \exx = \young(14,2)\, . \]
Note that these tableaux are incomparable in the majorization order.
The set of conjugate-semistandard tableaux majorized by one of $\exu$, $\exv$, $\exw$ and $\exx$ is
\[
\mathcal{T} = \Bigl\{ \, \sssyoung{(12,1)}\,,
\,\sssyoung{(12,2)}\,,\,\sssyoung{(12,3)}\,,\,\sssyoung{(12,4)}\,,\,
\sssyoung{(13,1)}\,,\,\sssyoung{(13,2)}\,,\, \sssyoung{(23,2)}\,,\,\sssyoung{(13,3)}\,,\, \sssyoung{(14,1)}\,,
\,\sssyoung{(14,2)}\, \Bigr\}.\]
Let $t_1, \ldots, t_{10}$ denote these tableaux, in increasing order under $<$, the total order from 
\S\ref{subsec:totalOrder}, as written above.
Note that $\exu = t_4$, $\exv = t_7$, $\exw= t_8$ and $\exx = t_{10}$.
The type $\lambda = (4^4,3^5,2^5,1^7)$ is minimal for the existence of
a conjugate-semistandard tableau family tuple of shape $(2,1)^{(8,8)}$;
up to the equivalence $(\mathcal{V}, \mathcal{U}) \sim (\mathcal{U},\mathcal{V})$ there
are exactly five such tuples. 
Writing $\mathcal{S}_{ab}=\{t_1,t_2,t_3,t_5,t_6,t_9\} \cup \{a,b\}$ for $a, b$ distinct elements of $\{\exu,\exv,\exw,\exx\}$, the five tuples are
\[ (\mathcal{S}_{\exu\exv},\mathcal{S}_{\exw\exx}),(\mathcal{S}_{\exu\exx},\mathcal{S}_{\exw\exv}),
(\mathcal{S}_{\exu\exw},\mathcal{S}_{\exv\exx}), (\mathcal{S}_{\exu\exv},\mathcal{S}_{\exu\exw}),(\mathcal{S}_{\exv\exx},\mathcal{S}_{\exw\exx}).\]
(This claim is not logically essential to this example; it may be verified using 
the Haskell \cite{Haskell98} program \texttt{TableauFamilies}
available from the second author's website\footnote{\url{www.ma.rhul.ac.uk/~uvah099}}.)   

For each $i \in \{1,2,3,4,5\}$, let
$\mathbf{S}_i \in \NOmega(\Omega^\lambda)$ be the  $(8,8)$-tableau with $(2,1)$-tableau entries
corresponding to the $i$th tuple above, 
as defined in \S\ref{subsec:tableaux}. 
Let $\mathbf{S}^\dagger_i \in \NOmega(\Omega^\lambda)$ be obtained from $\mathbf{S}_i$ by 
reordering the entries in each of its two rows
so that the two $(2,1)$-tableaux with entries from $\Omega^\lambda$ obtained by appending
indices to 
elements of $\{\exu,\exv,\exw,\exx\}$  
appear in positions $(1,7), (1,8),
(2,7), (2,8)$. We may perform this reordering so
 that applying the deindexing map $D$ to the 
entries in positions $(1,7)$, $(1,8)$, $(2,7)$, $(2,8)$ 
of the~$\mathbf{S}^\dagger_i$ gives the following tableaux:
\begin{equation}\label{eq:exts} 
\young(\exu\exv,\exw\exx)\, , \ \, \young(\exu\exx,\exw\exv)\, , \ \, \young(\exu\exw,\exv\exx) \, , \
\young(\exu\exv,\exu\exw)\, , \ \, \young(\exv\exx,\exw\exx)\, . \end{equation}
For each $i$ we have a corresponding homomorphism
$g_i : \widetilde{M}^\lambda \rightarrow (M^{(2,1)} \oslash M^{(8,8)}) \indmne$, as defined in~\S\ref{subsec:homs}.
Since $\lambda$ has minimal type, $g_i$ induces a non-zero homomorphism $S^\lambda 
\rightarrow (M^{(2,1)} \oslash M^{(8,8)}) \indmne$.
Define a homomorphism
$\smash{\tilde{g_i} : S^\lambda \rightarrow (M^{(2,1)} \oslash \widetilde{S}^{(8,8)}) \indmne}$ 
by composing this induced map with the canonical surjection 
$(M^{(2,1)} \oslash M^{(8,8)}) \indmne \rightarrow (M^{(2,1)} \oslash \widetilde{S}^{(8,8)})\indmne$. 
Thus
\[ 
e(t_\lambda)\widetilde{g}_i 
= \!\!\!\!\sum_{\theta \in I(\lambda) \atop \tau \in \RPP((8,8))} \!\!\!|\mathbf{S}_i \theta \cdot \tau| \sgn(\theta)
+ \widetilde{R} 
=
\!\!\!\sum_{\theta \in I(\lambda) \atop \tau \in \RPP((8,8))} \!\!\!
|\mathbf{S}_i^\dagger \cdot \tau| \theta \sgn(\theta)
+ \widetilde{R} \]
for each $i$, where the second equality holds because $\mathbf{S}_i = \mathbf{S}_i^\dagger\sigma_i$ 
for some $\sigma_i \in \RPP\bigl( (8,8) \bigr)$, and 
the place permutation action of
$\RPP\bigl((8,8)\bigr)$ commutes with the index permutation action of $I(\lambda)$.
Hence
\begin{equation}\label{eq:exS1} e(t_\lambda) 
\widetilde{g}_i = 
\sum_{\theta \in I(\lambda)} \widetilde{e}(\mathbf{S}_i^\dagger)\theta\sgn(\theta) + \widetilde{R}
\end{equation}
for each $i$.
Indicating $(2,1)$-tableaux with entries from $\Omega^\lambda$
by the deindexed tableaux used to define them,~\eqref{eq:exS1} implies that
\begin{equation}\label{eq:exS1d} 
\setlength{\arrayrulewidth}{.06em}
\makeatletter
\def\y@vr{\vrule height0.75\y@b@xdim width\y@linethick depth 0.3\y@b@xdim}
\def\y@setdim{%
  \ify@autoscale%
   \ifvoid1\else\typeout{Package youngtab: box1 not free! Expect an
     error!}\fi%
   \setbox1=\hbox{A}\y@b@xdim=1.8\ht1 \setbox1=\hbox{}\box1%
  \else\y@b@xdim=\y@boxdim \advance\y@b@xdim by -2\y@linethick
  \fi}
\makeatother
 e(t_\lambda) 
 \widetilde{g}_1 = 
\sum_{\theta \in I(\lambda)} 
\widetilde{e} \bigl(\,
 \young(\tto\ttt\ttd\ttf\tts\ttn\exu\exv,\tto\ttt\ttd\ttf\tts\ttn\exw\exx)\, \bigr)
\theta \sgn(\theta) + \widetilde{R}.  \end{equation}
Working with $(8,8)$-tableaux with entries 
from the set of formal symbols 
$\{1,2,3,4,5, 6, \onep,\twop,\threep, \linebreak \fourp,\fivep,\sixp, \alpha,
\beta, \gamma, \delta \}$,
the (dual) Garnir relation
\begin{align*}
\widetilde{e} \bigl( \, \tyoung{(123456\alpha\beta,\onep\twop\threep\fourp\fivep\sixp\gamma\delta)}\,
\bigr) 
&= -\widetilde{e} \bigl( \, \tyoung{(123456\alpha\delta,\onep\twop\threep\fourp\fivep\sixp\gamma\beta)}\,
\bigr) 
- \widetilde{e} \bigl( \, \tyoung{(123456\alpha\gamma,\onep\twop\threep\fourp\fivep\sixp\beta\delta)\,
\bigr)}  \\ 
& - \widetilde{e} \bigl( \, \tyoung{(123456\alpha\sixp,\onep\twop\threep\fourp\fivep\beta\gamma\delta)}\,
\bigr) - \cdots - 
\widetilde{e} \bigl( \, \tyoung{(123456\alpha\onep,\beta\twop\threep\fourp\fivep\sixp\gamma\delta)}\,
\bigr)
\end{align*}
holds in $\widetilde{S}^{(8,8)}$.
Applying this relation to $\widetilde{e}(\mathbf{S}_1^\dagger)$, as it appears in~\eqref{eq:exS1d},
and using the definition of $\mathbf{S}_2^\dagger$ and $\mathbf{S}_3^\dagger$ in~\eqref{eq:exts},
 we get
\[ \widetilde{e}(\mathbf{S}_1^\dagger) = -\widetilde{e}(\mathbf{S}_2^\dagger)\theta_2 
- \widetilde{e}(\mathbf{S}_3^\dagger)\theta_3 -
\widetilde{e}(\mathbf{T}_1)- \cdots - \widetilde{e}(\mathbf{T}_6)\]
for some $\theta_2, \theta_3 \in I(\lambda)$ and some $\mathbf{T}_1, \ldots, \mathbf{T}_6\in
\NOmega(\Omega^\lambda)$. Note that 
in the first row of each $\mathbf{T}_j$ there are two tableaux $t$ and $t^\star$
with entries from $\Omega^\lambda$ such that the deindexed tableaux $D(t)$ and $D(t^\star)$ are equal.
For each $j$, let $\psi_j \in I(\lambda)$ be the unique permutation that is the  product of three disjoint
transpositions such that $t\psi_j = t^\star$.  Thus 
$\widetilde{e}(\mathbf{T}_j)(1 + \sgn(\psi_j) \psi_j)  = 0$. Hence 
\[\!\!\sum_{\theta \in I(\lambda)} \!\!\!\widetilde{e}(\mathbf{S}_1^\dagger) \theta \sgn(\theta) = \!
-\sgn(\theta_2) \!\!\sum_{\theta \in I(\lambda)} \!\!\widetilde{e}(\mathbf{S}_2^\dagger) \theta \sgn(\theta) 
-\sgn(\theta_3) \!\!\sum_{\theta \in I(\lambda)} \!\!\widetilde{e}(\mathbf{S}_3^\dagger) \theta \sgn(\theta)  \]
and so $e(t_\lambda) 
\widetilde{g}_1 = -\sgn(\theta_2) e(t_\lambda) 
\widetilde{g}_2 - \sgn(\theta_3) e(t_\lambda)
\widetilde{g}_3$. It follows that the
homomorphisms $\widetilde{g}_1, \widetilde{g}_2, \widetilde{g}_3$
are linearly dependent.

Therefore 
the multiplicity of $S^{(4^4,3^5,2^5,1^7)}$ in
$H_{(2,1)}^{(2^8)}$ is at most $4$. 
Calculation using symmetric functions
in the computer algebra package {\sc Magma}~\cite{Magma}
shows that in fact the multiplicity is exactly $4$.

\end{example}

\section{Lexicographically maximal and minimal constituents}
\label{sec:applications}
In this section we determine
the lexicographically minimal and maximal partitions 
labelling summands of the generalized Foulkes modules $H_\mu^\nu$.
This problem was addressed by Agaoka~\cite{Agaoka}, in the context of plethysms
of symmetric functions, who made sixteen conjectures on the form of such partitions. 
It is remarkable that although tableaux (as opposed to Young diagrams) never formally
appear in \cite{Agaoka}, all of these conjectures are correct. 

The following
description of the lexicographically greatest constituent \cite[Conjecture~1.2]{Agaoka} was first proved by Iijima in \cite[Theorem~4.2]{Iijima}. We give an alternative proof using 
Theorem~\ref{thm:mainMaximal}.

\begin{corollary}\label{cor:lexMax} Let $\mu$  be a partition of $m$ with $k$ parts and let $\nu$ be a
 partition of $n$ with $\ell$ parts.
The lexicographically greatest partition labelling a Specht module occurring 
as a summand of $H_\mu^\nu$ is 
\[
\left(n\mu_1, \ldots, n \mu_{k-1}, n(\mu_k-1)+\nu_1, \nu_2, \ldots, \nu_\ell \right).
\]
Moreover, this Specht module appears with multiplicity~$1$.
\end{corollary}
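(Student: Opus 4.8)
The plan is to apply Theorem~\ref{thm:mainMaximal}, which says the maximal summands of $H_\mu^\nu$ in the dominance order are the maximal weights of conjugate-semistandard tableau family tuples of shape $\eta^{\nu'}$ where $\eta = \mu'$. First I would observe that the lexicographically greatest partition $\lambda$ labelling a summand of $H_\mu^\nu$ must in particular be a \emph{maximal} partition in the dominance order among those labelling summands (since if $\lambda \lhd \lambda^\dagger$ then $\lambda^\dagger$ is lexicographically greater), so it suffices to find the lexicographically greatest weight among all conjugate-semistandard tableau family tuples of shape $\eta^{\nu'}$, and then to check that this weight is a partition (so that it genuinely labels a Specht module summand, since Theorem~\ref{thm:mainMaximal} speaks of maximal weights that are partitions).

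Next I would analyse what it means for the weight $(\gamma_1,\ldots,\gamma_M)$ of such a tuple to be lexicographically greatest. The weight records, for each $j$, the total number of occurrences of $j$ across all the $\eta$-tableaux in all the families. Writing $\nu'$ as having parts $\nu'_1 \ge \nu'_2 \ge \cdots$, a tuple of shape $\eta^{\nu'}$ consists of, for each part $\nu'_i$ of $\nu'$, a set of $\nu'_i$ distinct conjugate-semistandard $\eta$-tableaux; the total number of $\eta$-tableaux is $|\nu'| = n$. To make $\gamma_1$ as large as possible we want the symbol $1$ to appear as often as possible: in a conjugate-semistandard $\eta$-tableau ($\eta = \mu'$, so $\eta$ has $\mu_1$ columns and its $j$th column has $\mu'_j$ boxes — wait, more carefully, $\eta = \mu'$ has parts $\mu'_1 \ge \mu'_2 \ge \cdots$, i.e.\ row $j$ of $\eta$ has length $\mu'_j$), the entry $1$ can occur at most once per row since rows are strictly increasing, so at most $\eta'_1 = \mu_1$ times per tableau; across $n$ tableaux this gives $\gamma_1 \le n\mu_1$, and equality is achieved by filling the first column of every tableau with $1$'s (which is allowed since columns are weakly increasing and this is consistent with distinctness only if $\eta$ has more than one column or we vary later columns). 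The key combinatorial point is that we can greedily maximise: I claim the lexicographically greatest weight is obtained by taking, in \emph{every} one of the $n$ tableaux, the first $k-1$ columns (recall $\eta = \mu'$ has $\mu_1 = k$... no: $\eta = \mu'$ has $\mu_1$ parts and first part $\mu'_1 = k$ where $k$ is the number of parts of $\mu$) — I need to recompute: $\eta = \mu'$, the number of columns of $\eta$ equals the number of rows of $\mu'$ equals $\mu_1$; the number of rows of $\eta$ equals $\mu'_1 = k$. So each $\eta$-tableau has $k$ rows and row $i$ has $\mu'_i$ boxes... this is getting intricate, but the upshot is: fill columns $1,\ldots,\mu_1 - 1$ of every tableau identically with the smallest possible strictly-increasing-row, weakly-increasing-column pattern (which forces $1$'s in column $1$, $2$'s in column $2$, etc., up through column $\mu_1-1$), contributing $n\mu_i'$ copies of... hmm. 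Let me instead phrase the claim as: the maximal weight is the conjugate of the partition $\left(n\mu_1, \ldots, n\mu_{k-1}, n(\mu_k - 1)+\nu_1, \nu_2,\ldots,\nu_\ell\right)$, and I would verify this by the following argument.

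The cleanest route: twist by the sign representation. By the proof of Theorem~\ref{thm:mainMaximal} from Theorem~\ref{thm:main} given in \S\ref{sec:proof}, $S^\lambda$ is the lexicographically greatest summand of $H_\mu^\nu$ iff $S^{\lambda'}$ is the \emph{colexicographically least} (equivalently, after a reversal argument, an extreme) summand, and minimal summands of $H_{\mu'}^\nu$ (or $H_{\mu'}^{\nu'}$) are given by minimal types of tuples of shape $\mu'^\kappa$ — but this just re-expresses the same computation. So instead I would argue directly: among conjugate-semistandard $\eta$-tableau family tuples of shape $\eta^{\nu'}$, the lexicographically greatest weight $\gamma$ is forced column-by-column. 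Each of the $n$ tableaux contributes its column-$1$ multiset, and since all tableaux are conjugate-semistandard with the same shape $\eta$ and rows strictly increasing, to maximise $\gamma_1$ then $\gamma_2$, etc., one shows by a straightforward exchange/greedy argument that the optimum has every tableau's $j$-th column (for $j < \eta_1 = \mu_1$) equal to the all-equal column $(j,j,\ldots,j)^T$ of height $\eta_j'$; this is consistent and distinctness is handled by varying only the last column. Summing, $\gamma_j = n\,\eta_j'$ for $j<\mu_1$. For the last column $j = \mu_1$: here the $n$ tableaux must be distinct (they agree in all earlier columns), and a column of height $\eta_{\mu_1}'$ must have strictly-increasing-row-forced lower bound entry $\mu_1$; distinctness of the $n$ last columns within each family (and the constraint from $\nu'$) forces the multiset of last-column contents; a short count shows the weight beyond position $\mu_1-1$ is exactly the conjugate of $(n(\mu_k-1)+\nu_1,\nu_2,\ldots,\nu_\ell)$ appended after $\mu_1 - 1 = k-1$... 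I will need to be careful matching up that $\eta_j' = \mu_j$ for $j \le \mu_1$ and that $k = \mu'_1$ is the number of rows of $\eta$. After assembling, I transpose to recover $\lambda$ and check it equals the claimed formula; then \textbf{multiplicity one} follows because there is a \emph{unique} tuple achieving this maximal weight (the greedy construction leaves no freedom), combined with the multiplicity upper bound of Theorem~\ref{thm:toFamily} ($q = 1$ such tuple of this type) and the lower bound of Proposition~\ref{prop:linindep} (one linearly independent homomorphism). The main obstacle will be the bookkeeping in the greedy/exchange argument — proving rigorously that no tuple has lexicographically larger weight than the greedy one, and correctly identifying the last-column contribution including the interaction with the parts of $\nu'$ — and making sure the resulting weight is genuinely a partition (monotonicity at the junction between position $k-1$, value $n\mu_{k-1}$, and position $k$, value $n(\mu_k-1)+\nu_1$) so that Theorem~\ref{thm:mainMaximal} applies.
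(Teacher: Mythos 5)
Your overall strategy is the paper's: reduce via Theorem~\ref{thm:mainMaximal} to finding the lexicographically greatest weight of a conjugate-semistandard tableau family tuple of shape $\eta^{\nu'}$, construct it greedily, and get multiplicity one from uniqueness of the optimal tuple together with Theorem~\ref{thm:toFamily}. But there is a genuine error in how you relate the weight to $\lambda$. You assert that ``the maximal weight is the conjugate of the partition $(n\mu_1,\ldots,n\mu_{k-1},n(\mu_k-1)+\nu_1,\nu_2,\ldots,\nu_\ell)$'' and that at the end you will ``transpose to recover $\lambda$''. This conflates \emph{weight} with \emph{type}: the type is the conjugate of the weight and is what appears in Theorem~\ref{thm:main} for minimal constituents, but Theorem~\ref{thm:mainMaximal} says the maximal partitions $\lambda$ \emph{are} the maximal weights themselves --- no conjugation is applied. (Check against Example~\ref{ex:ex21}: the weight $(8,4)$ is directly the maximal partition for $H_{(2,1)}^{(4)}$.) Executed as written, your plan computes a weight, declares it to be $\lambda'$, and transposes, which would deliver the conjugate of the correct answer. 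Relatedly, your bookkeeping for $\eta=\mu'$ is inverted in several places: $[\eta]$ has $\mu_1$ rows and $\eta_1=\mu'_1=k$ columns, not the other way around, so the ``last column'' where the greedy construction has freedom is column $k$, not column $\mu_1$.

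The clean way out, which is what the paper does, is to conjugate each conjugate-semistandard $\eta$-tableau into a \emph{semistandard} $\mu$-tableau (this preserves the multiset of entries and hence the weight). The greedy optimum is then transparent: every tableau has entry $a$ in row $a$ except in the single box $(k,\mu_k)$, where the $i$th tableau of each family carries $k+i-1$. Counting entries gives $\gamma_a=n\mu_a$ for $a<k$, $\gamma_k=n(\mu_k-1)+\nu_1$ and $\gamma_{k+j-1}=\nu_j$ for $j\ge 2$, which is already the claimed partition with no final transposition. One further small point: you do not need Proposition~\ref{prop:linindep} for the lower bound --- Theorem~\ref{thm:mainMaximal} already guarantees that $S^\lambda$ occurs, so uniqueness of the tuple plus the upper bound of Theorem~\ref{thm:toFamily} suffices for multiplicity one.
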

\begin{proof}
By Theorem~\ref{thm:mainMaximal}  it suffices to find the lexicographically maximal
weight of a conjugate-semistandard  tableau family tuple of shape $\eta^{\nu'}$ where $\eta = \mu'$.
Conjugating $\mu'$-tableaux, such a tuple becomes
$(\mathcal{T}_1, \ldots, \mathcal{T}_{c})$ where $c$ is the first part of $\nu$
and each $\mathcal{T}_i$ is a set
of $\nu_i'$ distinct \emph{semistandard} $\mu$-tableaux.
To maximise the weight in the lexicographic order, we maximise the number of $1$s occurring as entries
in these $\mu$-tableaux, then the number of $2$s, and so on. 
Hence we take $\mathcal{T}_i=\{t_1, t_2, \ldots, t_{\nu_i'}\}$ where $(a,b)t_i= a$ 
if $a < k$ or 
$j < \mu_k$ and $(k,\mu_k)t_i= k+i-1$. 
The weight of this family is 
$(n\mu_1, \ldots, n \mu_{k-1}, n(\mu_k-1)+\nu_1, \nu_2, \ldots, \nu_\ell)$.
Since there is a unique family of this weight, 
Theorem~\ref{thm:toFamily} implies that the multiplicity is~$1$.
\end{proof}

The description of the lexicographically minimal partition is more complicated. 
It is stated in the following two corollaries of Theorem~\ref{thm:main}, which
prove Conjectures~2.1 and~4.1 of \cite{Agaoka}. We need the following definition.

\begin{definition}
We define the \emph{join} of partitions $\lambda$ and $\tilde{\lambda}$, to be the partition 
whose multiset of parts is the union of the multisets of parts of $\lambda$ and $\tilde{\lambda}$. We denote the join of $\lambda$ and $\tilde{\lambda}$ by $\lambda \join \tilde{\lambda}$.
\end{definition}

For example, the join of $(4,2,1,1)$ and $(6,2,2,1)$ is $(6,4,2^3,1^3)$.

\begin{corollary}\label{cor:concatenate} 
The lexicographically least partition labelling a Specht module occurring  as a summand of $H_\mu^\nu$ is
obtained by taking the join of the lexicographically least partitions labelling Specht modules occurring
as summands of $H_\mu^{(\nu_i)}$ if $m$ is odd or of $H_\mu^{(1^{\nu_i})}$ if $m$ is even.
\end{corollary}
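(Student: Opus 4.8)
The plan is to pass to the combinatorics of Theorem~\ref{thm:main} and then to exploit the product structure of conjugate-semistandard tableau family tuples.

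First I would record the elementary fact that, for any finite set $P$ of partitions of a fixed integer, the lexicographically least element of $P$ is already minimal in the dominance order: it dominates some dominance-minimal element of $P$, and the dominance order refines the lexicographic order, so the two coincide. Applied with $P$ the set of labels of the Specht module summands of $H_\mu^\nu$, together with Theorem~\ref{thm:main} (a dominance-minimal summand label is a minimal type, and conversely), this shows that the lexicographically least summand label of $H_\mu^\nu$ equals the lexicographically least type of a conjugate-semistandard tableau family tuple of shape $\mu^\kappa$; likewise, for each part $\kappa_i$ of $\kappa=(\kappa_1,\ldots,\kappa_c)$, the lexicographically least summand label of $H_\mu^{(\kappa_i)}$ (when $m$ is odd, where $\kappa=\nu$) or of $H_\mu^{(1^{\kappa_i})}$ (when $m$ is even) is the lexicographically least type of a conjugate-semistandard tableau family of shape $\mu^{\kappa_i}$, since in each of those cases the shape attached by Theorem~\ref{thm:main} is $\mu^{(\kappa_i)}$. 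A tuple of shape $\mu^\kappa$ is precisely a list $(\mathcal{T}_1,\ldots,\mathcal{T}_c)$ of conjugate-semistandard tableau families of shapes $\mu^{\kappa_1},\ldots,\mu^{\kappa_c}$; its weight is the componentwise sum of the weights of the $\mathcal{T}_i$, and the weights of the $\mathcal{T}_i$ vary independently over all realisable weights.

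Two facts about partitions are then needed. The first is the identity $(\alpha+\beta)'=\alpha'\join\beta'$, where $\alpha+\beta$ is the componentwise sum and $\join$ is the join of the preceding definition; this follows in one line from the statistic $t\mapsto\#\{\text{parts}\ge t\}$, which records the $t$-th part of the conjugate and is additive under join. The second is the lexicographic monotonicity of the join on partitions of a fixed size: if $a\le_{\mathrm{lex}}a'$ then $a\join b\le_{\mathrm{lex}}a'\join b$, which one proves by a direct comparison of the weakly decreasing rearrangements of $a\join b$ and $a'\join b$ near the first coordinate $k$ where $a_k<a'_k$. Granting these, the inequality $\le_{\mathrm{lex}}$ of the corollary is immediate: choosing each $\mathcal{T}_i$ so that its weight is a partition whose conjugate is the lexicographically least type of the relevant single-part module produces a tuple whose weight, being a sum of partitions and hence a partition, has conjugate equal to the join of those lexicographically least labels, which is therefore itself a type of $H_\mu^\nu$.

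For the reverse inequality I would take an arbitrary tuple of shape $\mu^\kappa$ with partition weight $\gamma=\sum_i\gamma^{(i)}$ and let $\widehat{\gamma^{(i)}}$ be $\gamma^{(i)}$ sorted into a partition. The rearrangement inequality, applied coordinatewise to partial sums, gives $\gamma\unlhd\sum_i\widehat{\gamma^{(i)}}$; since conjugation reverses the dominance order and dominance refines the lexicographic order, $\gamma'\ge_{\mathrm{lex}}\bigl(\sum_i\widehat{\gamma^{(i)}}\bigr)'=\bigsqcup_i\bigl(\widehat{\gamma^{(i)}}\bigr)'$. Each $\widehat{\gamma^{(i)}}$ is again a realisable weight of a conjugate-semistandard tableau family of shape $\mu^{\kappa_i}$: one reaches any rearrangement of $\gamma^{(i)}$ by applying the Bender--Knuth involution---a shape-preserving bijection on conjugate-semistandard tableaux, see \cite[page~47]{BenderKnuth}---to each tableau of the family, which preserves distinctness and transposes two coordinates of the weight. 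Hence $(\widehat{\gamma^{(i)}})'$ is lexicographically at least the lexicographically least type of the relevant single-part module, and lexicographic monotonicity of the join then gives $\gamma'\ge_{\mathrm{lex}}$ the join of those lexicographically least labels. Combining the two inequalities proves the corollary.

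The step I expect to be the main obstacle is the bookkeeping around conjugation: the lexicographic order is neither monotone nor antimonotone under conjugation, so every comparison of conjugate partitions has to be routed through the dominance order (which is antimonotone under conjugation and is refined by $\le_{\mathrm{lex}}$) or through the number-of-parts statistic; and one must be careful to check that the sorted piece-weights $\widehat{\gamma^{(i)}}$ are themselves realisable, which is exactly the point at which the Bender--Knuth argument is invoked.
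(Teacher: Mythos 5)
Your proof is correct and follows essentially the same route as the paper: reduce via Theorem~\ref{thm:main} to showing that the lexicographically least type of a conjugate-semistandard tableau family tuple of shape $\mu^\kappa$ is attained by taking each constituent family of lexicographically least type, which the paper simply declares immediate. The extra work you supply --- the reduction of lex-least to dominance-minimal labels, the identity $(\alpha+\beta)'=\alpha'\join\beta'$, lexicographic monotonicity of the join, and the rearrangement-plus-Bender--Knuth argument showing the sorted piece-weights are realisable --- is a sound and complete justification of that step, all of it consistent with the paper's framework.
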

\begin{proof}
This is immediate from Theorem~\ref{thm:main} since the lexicographically minimal  type of a 
conjugate-semistandard tableau family tuple occurs when each conjugate-semistandard tableau family within the tuple has lexicographically minimal type.
\end{proof}

For example, the conjugate-semistandard tableau families $\{s_1,s_2,s_3,s_4\}$ and $\{s_1,s_2,s_4,s_5\}$ 
seen in Examples~\ref{ex:ex} and~\ref{ex:excont} have lexicographically minimal type $(3^2,2^2,1^2)$
and so
the lexicographically least partition labelling a Specht module occurring as a summand of $H_{(2,1)}^{(4,1)}$
is $(3^2,2^2,1^2) \join (2,1) = (3^2,2^3,1^3)$. By Example~\ref{ex:excont}, this summand has
multiplicity~$2$.

It remains to describe the lexicographically minimal type of a conjugate-semistandard tableau family of shape $\mu^n$. For this need a final pre-order:
if $s$ and $t$ are conjugate-semistandard $\mu$-tableaux with
multisets of entries $A$, $B \subseteq \N$, then we 
set $s \preceq_\entry t$ if $A \le B$ in the colexicographic order on subsets on $\N$,
defined in \S\ref{subsec:totalOrder}. It is clear that
a conjugate-semistandard tableau family of shape $\mu^n$ is of lexicographically minimal type if 
and only if it is the initial segment of length $n$ under a total order refining $\preceq_\entry$.

\newcommand{\lesseqdot}{\mathrel{\mathord{\le}\!\!\raisebox{0.875pt}{$\cdot$}}}

\begin{example}\label{ex:lex}
Let $\mu = (3,1)$. Recall that $\le$ denotes the total order on 
conjugate-semistandard $\mu$-tableaux
defined in Definition~\ref{defn:colorder}. Let $\lesseqdot$ 
be the total order refining $\preceq_\entry$ under which,
if $s$ and $t$ are conjugate-semistandard 
$\mu$-tableaux with the same multiset of entries, $s \lesseqdot t$
if and only if $s \le t$.
For example, the initial segment of $\lessdot$ of length $10$ is
\[\begin{split}
\ssyoung{(123,1)}\lessdot\ssyoung{(123,2)}{}&{}\lessdot\ssyoung{(123,3)}\lessdot\ssyoung{(124,1)}\lessdot\ssyoung{(124,2)}\\
&\qquad\lessdot \ssyoung{(134,1)}\lessdot \ssyoung{(123,4)}\lessdot
\ssyoung{(124,3)}\lessdot\ssyoung{(134,2)}\lessdot\ssyoung{(234,2)}\, . 
\end{split}
\] 
The tableaux in positions $7$, $8$ and $9$ have equal multisets of entries,
so their order in the initial segment
depends on our choice of $\lesseqdot$ to refine $\preceq_\entry$.
Thus if $n \le 10$ then there is a unique conjugate-semistandard 
tableau family of shape $(3,1)^n$ and lexicographically minimal type, except when $n=7$ or $n=8$,
in which case there are three.
\end{example}

We now give an algorithm that, given $\mu$ a partition of $m$ and a positive integer $n$,
outputs all initial segments of length $n$ of the total orders
refining $\preceq_\entry$. 
The freedom to choose the refinement of $\preceq_\entry$
enters only in Step~F of the algorithm.
The special case $\mu = (m)$ of this algorithm is a well known method for
finding initial segments of the colexicographic order on sets: see for instance \cite[page 25]{Bollobas}.

Let $\mathrm{CS}(\mu, k)$ denote the set of conjugate-semistandard $\mu$-tableaux 
with entries taken from $\{1,2,\ldots, k\}$. We 
write $\mu \to_c\theta$ to mean that $[\theta]$ is obtained from $[\mu]$ by removing $c$ boxes, no two lying in the same row.

\begin{algorithm}\label{alg:lex}Perform Steps $1$ up to $m$, then Step F.
\begin{itemize}
\item[$\bullet$]
\emph{[Step 1]} Choose
$k_1$ maximal such that $|\CS(\mu, k_1)| \le n$. Let 
$\mathcal{T}_{(1)} = \CS(\mu, k_1)$. 

\item[$\bullet$]
\emph{[Step $j$ for $j \in \{2,\ldots, m\}$]}
Let $(k_1, \ldots, k_{j-1}) = (\ell_1^{c_1}, \ldots, \ell_q^{c_q})$
where the $\ell_i$ are distinct. Choose $k_j \in \N_0$ maximal such that
\[ \sum |\CS(\theta, k_j)| \le n - (|\mathcal{T}_{(1)}| + \cdots + |\mathcal{T}_{(j-1)}|) \]
where
the sum is over all sequences of partitions 
$(\theta^{(1)}, \ldots, \theta^{(q)})$ such that
\begin{equation}
\label{eq:removalSequence}
\mu
\to_{c_1} \theta^{(1)} \to_{c_2} 
\ldots \to_{c_{q-1}}\theta^{(q-1)} \to_{c_q} \theta^{(q)} = \theta.
\end{equation}
For each sequence of partitions, take all $\mu$-tableaux $t$ such that
(i) $t$ has entries $\ell_i+1$  in the positions of $\theta^{(i)}/\theta^{(i-1)}$,
for each $i \in \{1,\ldots, q\}$, \emph{and}~(ii)~$t$ has an element of $\CS(\theta,k_j)$ in the positions  of $\theta$.
Let $\mathcal{T}_{(j)}$ be the set of such tableaux. 

\item[$\bullet$]
\emph{[Step F]}
Let $(k_1,\ldots, k_{m}) = (\ell_1^{d_1}, \ldots, \ell_{r}^{d_r})$.
Let $\mathcal{S}$ be the union over all sequences
$\mu \to_{d_1} \theta^{(1)} \to_{d_2} \theta^{(2)} \to_{d_{m}} \theta^{(r-1)} \to_{d_r} 
\theta^{(r)} = \varnothing$
of the set of $\mu$-tableaux constructed as in (i), by putting $\ell_i+1$ in the positions
of $\theta^{(i)} / \theta^{(i-1)}$ for each $i \in \{1,\ldots, r\}$.
The required initial segments are precisely the sets
\[ \mathcal{T}_{(1)} \cup \cdots \cup \mathcal{T}_{(m)} \cup \mathcal{U} \] 
where
$\mathcal{U}$ is any subset of $\mathcal{S}$ of size $n-(|\mathcal{T}_{(1)}| +
\cdots + |\mathcal{T}_{(m)}|)$.
\end{itemize}
\end{algorithm}

Since $\CS(\theta, 0) = \varnothing$ whenever $\theta$ is non-empty, 
there is always a suitable choice of $k_j$ in each Step $j$. 
By the maximality of $k_j$, the sum
$|\CS(\theta,k_j+1)|$ over 
all sequences as in~\eqref{eq:removalSequence} exceeds 
$n - (|\mathcal{T}_{(1)}| + \cdots + |\mathcal{T}_{(j-1)}|)$. The analogous sum
of $|\CS(\phi,k_j+1)|$
over all sequences with a further step $\theta^{(q)} \to_1 \theta^{(q+1)} = \phi$,
if $k_j > k_{j-1}$, or an extra box removed in \hbox{$\theta^{q-1} \to_{c_{q-1} + 1} \theta^{(q)} = \phi$}
if $k_j = k_{j-1}$, is at least as great, since putting $k_j+1$ in the 
boxes removed in the new final step gives all tableaux counted by the original sum
(and possibly some further tableaux that do not have strictly increasing rows).
The sequence $(k_1,k_2,\ldots, k_{m})$ is therefore weakly decreasing
and the construction in (i) and~(ii)  gives
conjugate-semistandard $\mu$-tableaux, with minimum possible maximal entries.
Similarly, the maximality of $k_{m}$ implies that $|\mathcal{S}| \ge n-(|\mathcal{T}_{(1)}|
+ \cdots + |\mathcal{T}_{(m)}|)$.
Thus $\mathcal{S}$ is the set of conjugate-semistandard $\mu$-tableaux having
exactly $d_i$ entries equal to $\ell_i+1$ for each $i \in \{1, \ldots, r\}$.
Therefore Algorithm~\ref{alg:lex} constructs the required initial segments.

The computer software mentioned earlier includes an implementation of the algorithm.

\begin{example}\label{ex:lex2}
Take $\mu = (3,1)$ and $n = 7$. In Step 1, since 
$\bigl|\CS\bigl((3,1),3\bigr)\bigr| = 3 \le 7$, 
while $|\CS\bigl((3,1),4\bigr)\bigr| = 15 > 7$,
we take $k_1 = 3$ and
\[
\mathcal{T}_{(1)}=\left\{\, \young(123,1)\, ,\, \young(123,2)\, ,\,\young(123,3)\, \right\}.
\]
In Step 2, there are two sequences of partitions to consider: $(3,1) \to_1 (3)$
and $(3,1) \to_1 (2,1)$. Since 
$\bigl|\CS\bigl((3), k\bigr)|+ \bigl|\CS\bigl((2,1), k\bigr)\bigr|$ is $2$
when $k=2$ and~$9$ when $k=3$, we take $k_2 = 2$ and
\[ \mathcal{T}_{(2)} = \left\{\, \young(124,1)\, , \, \young(124,2)\, \right\}. \]
In Step 3, there are three sequences of partitions to consider:
$(3,1) \to_1 (3) \to_1 (2)$, $(3,1) \to_1 (2,1) \to_1 (2)$ and $(3,1) \to_1 (2,1) \to_1 (1,1)$.
The corresponding sum is $1$ if $k_3 = 1$ and $5$ if $k_3 = 2$, so we take $k_3 = 1$
and
\[ \mathcal{T}_{(3)} = \left\{\, \young(134,1)\, \right\}. \]
In Step 4 there are again
three sequences of partitions to consider:
$(3,1) \to_1 (3) \to_1 (2) \to_1 (1)$, $(3,1) \to_1 (2,1) \to_1 (2) \to_1 (1)$ 
and $(3,1) \to_1 (2,1) \to_1 (1,1) \to_1 (1)$.
The corresponding sum is $0$ if $k_4 = 0$ and $3$ if $k_4=1$, so we take $k_4 = 0$ and $\mathcal{T}_{(4)}=
\varnothing$.
In the final step, Step F, we have
\[ \mathcal{S} = \left\{\, \young(123,4)\, , \, \young(124,3)\, , \, \young(134,2)\, \right\}. \]
(Note these are precisely the tableaux obtained by instead taking $k_4 = 1$ in Step 4.)
As expected from Example~\ref{ex:lex}, any tableau in $\mathcal{S}$ may be chosen to complete
an initial segment of length $7$ in a total order refining $\preceq_\entry$; the output
of the algorithm is the three tableaux families $\mathcal{T}_{(1)} \cup
\mathcal{T}_{(2)} \cup \mathcal{T}_{(3)} \cup \mathcal{T}_{(4)} \cup \{t \}$,
where $t \in \mathcal{S}$. 
\end{example}

The following corollary is Conjecture~4.2 in \cite{Agaoka}. It has a
constructive proof using Algorithm~\ref{alg:lex}.

\begin{corollary}\label{cor:lexMin} 
With the notation as above, for each $j \in \{1,\dots, m\}$, 
let $a_j = n - (|\mathcal{T}_{(1)}| + \cdots + |\mathcal{T}_{(j)}|)$ 
and let $b_j = |\mathcal{T}_{(j)}| (m+1-j)/k_j$.
The lexicographically least partition labelling a Specht module 
occurring as a summand of $H_\mu^{(n)}$ 
if~$m$ is odd or of $H_\mu^{(1^{n})}$ if $m$ is even is
\[
\bigl( (k_1+1)^{a_1}, k_1^{b_1-a_1}, (k_2+1)^{a_2}, k_2^{b_2-a_2},\ldots, 
(k_{m}+1)^{a_{m}}, k_{m}^{b_{m}-a_{m}} \bigr)
\]
where it may be necessary to reorder and regroup the parts to form a partition.
\end{corollary}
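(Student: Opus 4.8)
The plan is to reduce the corollary to a weight computation for the family output by Algorithm~\ref{alg:lex}, and then to carry that computation out using the recursion built into the algorithm together with a Bender--Knuth symmetry.

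First I would pin down exactly what has to be shown. Since a dominance-smaller partition is also lexicographically smaller, the lexicographically least partition labelling a summand of $H_\mu^{(n)}$ (when $m$ is odd) or of $H_\mu^{(1^n)}$ (when $m$ is even) is dominance-minimal among such labels; as $\kappa=(n)$ here, Theorem~\ref{thm:main} shows it is the conjugate $\gamma'$ of the weight $\gamma$ of a conjugate-semistandard tableau family of shape $\mu^n$ of lexicographically minimal type. By the discussion following Algorithm~\ref{alg:lex}, every such family arises as an output $\mathcal{T}_{(1)}\cup\cdots\cup\mathcal{T}_{(m)}\cup\mathcal{U}$ of the algorithm, and this output is of lexicographically minimal type; as the lexicographic order is total, its type is the same for every choice of $\mathcal{U}$ (concretely, all tableaux in $\mathcal{S}$ have the same multiset of entries, so they contribute equally to $\gamma$). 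Thus the corollary reduces to computing $\gamma$ and checking that $\gamma'$ equals the displayed partition after reordering and regrouping.

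For the computation I would use two ingredients. The first is the recursion $\CS(\mu,k)=\bigsqcup_\theta \CS(\theta,k-1)$, the union running over all $\theta$ with $\mu\to_1\theta$, obtained by recording the set of cells carrying the entry $k$ (which is a vertical strip because rows are strictly increasing); iterating it is precisely the removal-sequence bookkeeping in Steps~$2,\dots,m$ and Step~F. The second is the Bender--Knuth involution on $\CS(\theta,k)$ interchanging the number of entries equal to $i$ with the number equal to $i+1$ (conjugate to the classical involution on semistandard $\theta'$-tableaux); it gives that each of the values $1,\dots,k$ occurs a total of $|\CS(\theta,k)|\,|\theta|/k$ times over $\CS(\theta,k)$. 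Combining these: for $j\ge 2$ every tableau of $\mathcal{T}_{(j)}$ uses each value $\ell_i+1$ exactly $c_i$ times among its ``new'' entries and carries an arbitrary element of $\CS(\theta,k_j)$ on its remaining $m+1-j$ cells, so $\mathcal{T}_{(j)}$ contributes $b_j=|\mathcal{T}_{(j)}|(m+1-j)/k_j$ to each of $\gamma_1,\dots,\gamma_{k_j}$ and $c_i|\mathcal{T}_{(j)}|$ to $\gamma_{\ell_i+1}$; likewise $\mathcal{T}_{(1)}$ contributes $b_1$ to each of $\gamma_1,\dots,\gamma_{k_1}$, and $\mathcal{U}$ contributes $d_i|\mathcal{U}|$ to $\gamma_{\ell_i+1}$ in the notation of Step~F.

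It then remains to sum these contributions and conjugate. Using $a_j=|\mathcal{T}_{(j+1)}|+\cdots+|\mathcal{T}_{(m)}|+|\mathcal{U}|$, the ``new entry'' contributions at the value $k_j+1$ telescope to $a_j$ in the strictly decreasing case, while the ``old entry'' contributions $b_\bullet$ add up so that after conjugation the part $k_j+1$ occurs with multiplicity $a_j$ and the part $k_j$ with multiplicity $b_j-a_j$. The main obstacle is making this last step precise when the weakly decreasing sequence $k_1\ge k_2\ge\cdots$ has repetitions: then one value $v$ can be a ``new'' entry for one step and an ``old'' entry for several others, the multiplicities $c_i$ and $d_i$ (not just the indices) genuinely enter the count of $v$'s, and the parts $k_j+1$ and $k_j$ arising from different steps collide. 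Tracking, for each $v$, the set of indices $j$ with $k_j\ge v$ together with whether $v-1\in\{k_1,\dots,k_m\}$, and then merging equal parts, is exactly the ``reorder and regroup'' of the statement, and this is where the care is needed.
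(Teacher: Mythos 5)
Your proposal is correct and follows essentially the same route as the paper: reduce via Theorem~\ref{thm:main} to computing the type of the output of Algorithm~\ref{alg:lex}, use the fact that each value $i\le k$ occurs $|\CS(\theta,k)||\theta|/k$ times over $\CS(\theta,k)$ (the paper cites the symmetry of $s_\theta$ where you cite Bender--Knuth, but these are the same fact) to get the contributions $b_j$, and then account for the entries $\ell_i+1$ in the removed boxes before conjugating. The only difference is organizational — the paper counts each new value $k_j+1$ once per tableau in $\mathcal{T}_{(j+1)}\cup\cdots\cup\mathcal{U}$, giving $a_j$ directly, whereas you sum the multiplicities $c_i|\mathcal{T}_{(j)}|$ and $d_i|\mathcal{U}|$ family by family and let the sum telescope to $\sum_{j:k_j=v-1}a_j$ — and both correctly reduce the repeated-$k_j$ case to the ``reorder and regroup'' clause.
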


\begin{proof}
Algorithm~\ref{alg:lex} 
constructs
a conjugate-semistandard tableau family tuple of shape $\mu^n$ and lexicographically least type. 
It remains to show that its type is as claimed. 
For $i\in\{1,2,\ldots,k\}$ and $\eta$ any partition,
the total number of occurrences of $i$ as an entry in a member 
of $\mathrm{CS}(\eta, k)$ is $|\mathrm{CS}(\eta, k)||\eta|/k$.
(This is essentially the statement that the Schur function $s_\eta$ is symmetric.)
Hence  the type of the family $\mathcal{T}_{(1)}$ is $k_1^{b_1}$.
Now consider the entry $k_1+1$. It appears in a removable box of each 
tableau in
$\mathcal{T}_{(2)} \cup \ldots \cup \mathcal{T}_{(m)} \cup \mathcal{U}$.
(If $k_1 = k_2$ there may be other entries in these tableaux equal to $k_1+1$;
these will be counted shortly.) Since there are $a_1 = n - |\mathcal{T}_{(1)}|$
such tableaux, the contribution to the type of the initial segment from
entries considered so far is $\bigl( (k_1+1)^{a_1}, k_1^{b_1-a_1} \bigr)$.
This gives the first two terms.

Let $j \in \{2,\ldots, m\}$.
By the construction in (i) in Step $j$, the number of occurrences of any  
$i\in\{1,2,\ldots,k_j\}$ in the tableaux in $\mathcal{T}_{(j)}$ is 
\[
\sum  \frac{|\theta| |\mathrm{CS}(\theta, k_j)|}{k_j}= \frac{|\mathcal{T}_{(j)}|(m+1-j)}{k_j}=b_j,
\]
where the sum is over all sequences~\eqref{eq:removalSequence}.
Similarly to the case $j=1$, we count one appearance of $k_j+1$ in 
each of the $a_j$ tableaux
in $\mathcal{T}_{(j+1)} \cup \ldots \cup \mathcal{T}_{(m)} \cup \mathcal{U}$.
The contribution to the type of the initial segment is therefore 
$\bigl( (k_j+1)^{a_j}, k_j^{b_j-a_j} \bigr)$.

All entries of the tableaux in $\mathcal{U}$ are now accounted for, so the
type of the initial segment is as claimed.
\end{proof}

By Theorem~\ref{thm:toFamily}, if
there is a unique conjugate-semistandard tableau family of the lexicographically minimal type
then the multiplicity of the lexicographic minimal Specht module is~$1$. Conversely, if
there are two or more such families then we have a choice of which tableaux to choose from
the final set $\mathcal{S}$ in Algorithm~\ref{alg:lex}: the unique greatest element $u$
under $\preceq_\col$ is obtained by putting the entries $\ell_1+1$ as far to the right as possible,
then doing the same with the entries $\ell_2+1$, and so on. Thus $t \prec_\col u$ for all
$t \in \mathcal{U}$ with $t\not= u$, and taking subsets $\mathcal{U}$ and $\mathcal{U}'$ of $\mathcal{S}$
of the appropriate size such that $u \in \mathcal{U}$ and $u\not\in \mathcal{U}'$ 
gives families satisfying the hypotheses for Proposition~\ref{prop:linindep}, 
and so the multiplicity is at least~2.

In general the multiplicity of the lexicographically minimal Specht module may be arbitrarily large.

\begin{corollary}\label{cor:minMult}
Let $m \in \N$. Let $n \in \N$ and let $\nu = (1^n)$ if $m$ is even
and $(n)$ is $m$ is odd.
The multiplicity of the lexicographically minimal constituent
of $H_{(m-1,1)}^{\nu}$ is equal to $\binom{m-1}{e}$ for some $e \in \{1,\ldots, m-1\}$.
Moreover all these values are attained for some $n \in \N$. 
\end{corollary}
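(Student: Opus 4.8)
The plan is to translate the statement into a count of conjugate-semistandard tableau families and then to carry out that count for $\mu=(m-1,1)$. In both parities one has $\kappa=(n)$, so by Theorem~\ref{thm:main} the multiplicity in question equals the multiplicity of $S^{\lambda}$ in $H_{(m-1,1)}^{\nu}$, where $\lambda$ is the lexicographically least type of a conjugate-semistandard $(m-1,1)$-tableau family of shape $(m-1,1)^{n}$. Write $M(t)$ for the multiset of entries of a $(m-1,1)$-tableau $t$. Such families of least type are exactly the initial segments of length $n$ of the total orders refining $\preceq_\entry$; letting $M$ be the (refinement-independent) entry multiset of the $n$th tableau in such a segment, $F=\{t:M(t)<M\text{ in colex}\}$ and $G=\{t:M(t)=M\}$, these initial segments are precisely the sets $F\cup U$ with $U\subseteq G$ and $|U|=n-|F|$, where $1\le n-|F|\le|G|$. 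Hence there are exactly $q=\binom{|G|}{\,n-|F|\,}$ of them, and they are exactly the conjugate-semistandard tableau families of type $\lambda$, so $\dim\Hom_{\Q S_{mn}}(S^{\lambda},H_{(m-1,1)}^{\nu})\le q$ by Theorem~\ref{thm:toFamily}.

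Next I would establish that $|G|\in\{1,m-1\}$. A size-$m$ multiset equals $M(t)$ for some conjugate-semistandard $(m-1,1)$-tableau $t$ exactly when it is a set of size $m$, or a set of size $m-1$ with a single element repeated. In the second case the repeated value must be the second-row entry of $t$, so $t$ is unique; in the first case, writing $M=\{c_1<\dots<c_m\}$, the second-row entry may be any $c_i$ with $i\ge 2$, and these yield $m-1$ distinct tableaux. Therefore $q=\binom{m-1}{\,n-|F|\,}$ in the first case and $q=1=\binom{m-1}{m-1}$ in the second, so the multiplicity is at most $\binom{m-1}{e}$ for some $e\in\{1,\dots,m-1\}$.

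For the reverse inequality I would show the $q$ homomorphisms $S^{\lambda}\to H_{(m-1,1)}^{\nu}$ of Proposition~\ref{prop:nonzero} (namely $\overline f_{\mathcal T}$ if $m$ is even and $\overline g_{\mathcal T}$ if $m$ is odd) attached to the $q$ families of type $\lambda$ are linearly independent, by refining the leading-term argument of Propositions~\ref{prop:nonzero} and~\ref{prop:linindep}. Order the families $F\cup U^{(1)},\dots,F\cup U^{(q)}$ so that $U^{(1)},\dots,U^{(q)}$ occur in colexicographic order as subsets of $G$ with respect to $\preceq_\col$, which is a total order on conjugate-semistandard $(m-1,1)$-columnar tabloids since $\|t\|$ determines $t$ for such tableaux. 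As in the proof of Proposition~\ref{prop:nonzero}, each term of $\overline g_{\mathcal T^{(c)}}$ has, after deindexing, a multiset of entry columnar tabloids obtained from $\{\|w\|:w\in F\cup U^{(c)}\}$ by $\preceq_\col$-non-increasing replacements of the individual entries (Lemma~\ref{lemma:preorder}(i)); a sorting argument in the spirit of Lemma~\ref{lemma:alphabeta}, in which the common part $F$ cancels, shows that $\{\|w\|:w\in F\cup U^{(e)}\}$ arises this way from $\{\|w\|:w\in F\cup U^{(c)}\}$ if and only if $U^{(e)}$ lies below $U^{(c)}$ in the dominance-type order on subsets of $G$ induced by $\preceq_\col$, which the colexicographic ordering precludes for $c<e$. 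Thus the coefficient of $|\bT^{(e)}|+\widetilde{C}$ in $\overline g_{\mathcal T^{(e)}}$ is non-zero while it vanishes in $\overline g_{\mathcal T^{(c)}}$ for all $c<e$, forcing linear independence; hence the multiplicity equals $q=\binom{m-1}{e}$. I expect this to be the main obstacle: since $q$ can be much larger than $|G|=m-1$, the literal hypothesis of Proposition~\ref{prop:linindep} cannot be met by a greedy choice of witnesses $s^{(e)}$, and one must instead track leading terms along the colexicographic ordering.

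Finally, for the assertion that all the values are attained, fix $e\in\{1,\dots,m-1\}$ and take $M=\{1,2,\dots,m\}$ (any all-distinct $m$-set works). Then $F=\{t:M(t)<M\text{ in colex}\}$ is finite, because $M(t)<M$ forces every entry of $t$ to be at most $m$; put $n=|F|+e$. For this $n$ the transition multiset is $M$, so $|G|=m-1$ and $n-|F|=e$, and by the argument above the multiplicity of the lexicographically minimal constituent of $H_{(m-1,1)}^{\nu}$ is $\binom{m-1}{e}$.
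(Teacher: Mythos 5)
Your argument is correct and, in outline, the same as the paper's: count the families of lexicographically minimal type (your direct analysis of the set $G$ of tableaux with the transition multiset replaces the paper's appeal to Algorithm~\ref{alg:lex}, but the dichotomy $|G|\in\{1,m-1\}$ is identical), bound the multiplicity above by this count using Theorem~\ref{thm:toFamily}, bound it below by linear independence of the explicit homomorphisms, and realize each value $\binom{m-1}{e}$ by taking the transition multiset to be a set and adjusting $n$. The genuine divergence is the linear-independence step, and your suspicion about Proposition~\ref{prop:linindep} is well founded. The paper asserts its hypotheses hold because $||t(h_1)||\prec_\col\cdots\prec_\col||t(h_{m-1})||$, but the hypotheses force the witnesses $s^{(2)},\ldots,s^{(d)}$ to form a strictly increasing $\prec_\col$-chain of distinct elements of $G$, each dominating every tableau of $F$; this is impossible once $d=\binom{m-1}{e}>m$, and it already fails for $\mu=(3,1)$, $n=7$, where $\young(134,1)\in F$ and the only $g\in G$ with $||\young(134,1)||\prec_\col ||g||$ is the $\prec_\col$-greatest one, $\young(134,2)$. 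Your repair is the right one and does go through: the proof of Proposition~\ref{prop:linindep} already shows that a non-zero coefficient of the basis element attached to $\mathcal{T}^{(e)}$ in the image under the map attached to $\mathcal{T}^{(c)}$ yields a bijection from $\{||w||:w\in F\cup U^{(e)}\}$ to $\{||w||:w\in F\cup U^{(c)}\}$ that is $\preceq_\col$-increasing; chasing orbits of this bijection through the common part $F$ and using transitivity (the mechanism of Lemma~\ref{lemma:alphabeta}) cancels $F$ and leaves a domination of $U^{(e)}$ by $U^{(c)}$ inside the chain $G$, which forces $U^{(e)}\le U^{(c)}$ colexicographically and hence $e\le c$; combined with Proposition~\ref{prop:nonzero} on the diagonal this gives the triangularity you want.

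Two small corrections. First, $\preceq_\col$ is not a total order on all conjugate-semistandard $(m-1,1)$-columnar tabloids: already for $\mu=(2,1)$ the tabloids of $\young(15,4)$ and $\young(25,3)$ are incomparable, because the comparison takes place in the two-element first column. What you need, and what is true, is only that the $m-1$ tableaux in $G$ form a chain: any two have the same entry set but different first rows, so the rightmost column in which they differ is a singleton column. Second, when $m$ is even the leading coefficient to track is that of $\{\bT^{(e)}\}+C$ in $\overline{f}_{\mathcal{T}^{(c)}}$ rather than $|\bT^{(e)}|+\widetilde{C}$ in $\overline{g}_{\mathcal{T}^{(c)}}$; the argument is otherwise unchanged.
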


\begin{proof}
If the sequence $(k_1,\ldots,k_m)$ created by Algorithm~\ref{alg:lex} has a
repeated element, say $k$, then since there is a unique conjugate-semistandard $(m-1,1)$-tableau
with two occurrences of $k+1$, the final set $\mathcal{S}$ is a singleton. 
Otherwise $\mathcal{S}$ consists of all $m-1$ standard Young tableaux with content 
$\{k_1+1,\ldots,k_m+1\}$. In this case there are $\binom{m-1}{e}$ conjugate-semistandard
tableau families of shape $(m-1,1)^n$, where $e = n - (|\mathcal{T}_{(1)}| + \cdots
+ |\mathcal{T}_{(m)}|)$. 

Conversely, let $\lessdot$ be the total order refining $\preceq_\entry$ defined 
in the same way as Example~\ref{ex:lex}. Let $h_1 > h_2 > \ldots > h_m$, and 
for $h \in \{h_1,\ldots, h_{m-1}\}$, let
$t(h)$ be the unique conjugate-semistandard
$(m-1,1)$-tableau with content $\{h_1, h_2, \ldots, h_m \}$ having
$h$ in position $(2,1)$.
There is an initial segment of $\lessdot$ 
ending
\[ t(h_1) \lessdot t(h_2) \lessdot \ldots \lessdot t(h_{m-1}). \]
 (When this initial segment is
constructed using Algorithm~\ref{alg:lex}, $k_i + 1 = h_i$ for each $i \in \{1,\ldots, m\}$.)
Therefore, given any $e \in \{1,\ldots, m-1\}$
there exists $n$ such that there are exactly $\binom{m-1}{e}$ conjugate-semistandard
tableau families of shape $(m-1,1)^n$ and lexicographically minimal type, corresponding to the $\binom{m-1}{e}$ choices in Step F.
Suppose that $\mathcal{T}^{(1)}, \ldots, \mathcal{T}^{(d)}$ 
are these families and let $\lambda$ be their common type. 
Since the columnar tableaux $||t(h_1)||$, $||t(h_2)||$, \ldots, $||t(h_{m-1})||$ are totally ordered
under $\preceq_\col$ with $||t(h_1)|| \preceq_\col ||t(h_2)|| \preceq_\col \ldots 
\preceq_\col ||t(h_{m-1})||$,
we see that the hypotheses for Proposition~\ref{prop:linindep}
are satisfied. We therefore have $\binom{m-1}{e}$ linearly independent homomorphisms
from $S^\lambda$ to $H_{(m-1,1)}^\nu$.
By Theorem~\ref{thm:toFamily}, the multiplicity is exactly~$\binom{m-1}{e}$.
\end{proof}

We remark that the least $n$ such that the 
multiplicity in Corollary~\ref{cor:minMult} attains its maximum value of 
$\binom{m-1}{\lfloor(m-1)/2\rfloor}$ is $m^2/2 -1$ if $m$ is even, and $(m^2-1)/2$ if $m$ is odd.

For the uniqueness result mentioned at the end of
the introduction we need the following proposition.
Recall that a partition of the form $(a^b)$ for $a$, $b \in \N$ is said to be
\emph{rectangular}.

\begin{proposition}\label{prop:unique}
There is a unique conjugate-semistandard tableau family of shape $\mu^n$ and maximal weight
if and only if $m=1$, or $n=1$, or $\mu$ is rectangular and $n = 2$.
\end{proposition}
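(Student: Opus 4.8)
The plan is to work in the poset $P$ whose elements are the conjugate-semistandard $\mu$-tableaux with entries in $\N$, ordered by the majorization order $\preceq_\maj$ of Definition~\ref{defn:majorizationTableaux}; write $\mathrm{cont}(t)$ for the content of such a tableau, regarded as a composition. Two facts are the foundation. First, $\mathrm{cont}(t)\unlhd\mu'$ for every $t$, with equality if and only if $t=t^\circ$, the tableau defined by $(i,j)t^\circ=j$: the entry of $t$ in box $(i,j)$ is at least $j$, so entries at most $k$ occupy only the first $k$ columns, and equality for all $k$ forces every box of column $j$ to contain $j$. In particular $t^\circ$ is the unique minimal element of $P$. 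Second, the type of a family of maximal weight is a partition conjugate to that weight, so such a family has minimal type, and hence by Lemma~\ref{lemma:minimalimpliesclosed} is closed; thus every maximal-weight family of shape $\mu^n$ is an order ideal of size $n$ in $P$, and it suffices to study these.

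With this in hand the easy implications are quick. If $m=1$ then $P$ is the chain $(\N,\le)$, whose only order ideal of size $n$ is $\{1,\dots,n\}$. If $n=1$ then by the first fact the only maximal-weight family is $\{t^\circ\}$. If $\mu=(a^b)$ is rectangular and $n=2$, then because all columns of $[\mu]$ have length $b$, the only box of $t^\circ$ whose entry can be raised by $1$ while preserving the conjugate-semistandard property is the bottom-right box $(b,a)$; calling the result $t^\bullet$, one checks that any conjugate-semistandard tableau lying strictly above $t^\circ$ in $\preceq_\maj$ must have its last entry in row $b$ exceeding $a$ and so lies weakly above $t^\bullet$. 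Hence $t^\bullet$ is the unique cover of $t^\circ$ in $P$, so $\{t^\circ,t^\bullet\}$ is the unique order ideal of size $2$, hence the unique maximal-weight family.

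For the converse, assume $m\ge 2$, $n\ge 2$ and that we are not in the case ($\mu$ rectangular and $n=2$); the aim is to exhibit two distinct maximal-weight families. If $\mu$ is not rectangular, its Young diagram has removable corners in two distinct columns $j_1>j_2$, and raising the entry of $t^\circ$ in each of these corners by $1$ produces two distinct covers $\alpha,\beta$ of $t^\circ$ with $\mathrm{cont}(\alpha)=\mu'-\mathbf{e}_{j_1}+\mathbf{e}_{j_1+1}$ and $\mathrm{cont}(\beta)=\mu'-\mathbf{e}_{j_2}+\mathbf{e}_{j_2+1}$; comparing partial sums at $j_1$ and at $j_2$ shows these contents are dominance-incomparable. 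If instead $\mu=(a^b)$ is rectangular and $n\ge 3$, then $t^\bullet$ lies in every order ideal of size at least $3$, and $t^\bullet$ has two covers whose content-increments over $t^\bullet$ are incomparable: raising the entry in box $(b,a)$ a second time, versus raising the entry in box $(b,a-1)$, or in box $(b-1,1)$ when $a=1$. Either way we obtain two order ideals $\mathcal I_1\neq\mathcal I_2$ — of size $2$ in the first case and of size $3$ in the second — whose weights are dominance-incomparable, so each is a maximal-weight family for that value; this already settles $n=2$ for non-rectangular $\mu$.

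The remaining and most delicate step is to promote $\mathcal I_1$ and $\mathcal I_2$ to distinct maximal-weight order ideals of size $n$ when $n$ exceeds $2$ or $3$. The plan is to prove a greedy/exchange principle for maximal-weight ideals: such ideals are precisely those built by repeatedly adjoining an element of dominance-maximal content among the addable elements and retaining all Pareto-incomparable outcomes; equivalently, if $\mathcal T$ has maximal weight, $x$ is $\preceq_\maj$-maximal in $\mathcal T$, and $x$ may be replaced by some $y\notin\mathcal T$ with $\mathrm{cont}(x)$ and $\mathrm{cont}(y)$ incomparable and $(\mathcal T\setminus\{x\})\cup\{y\}$ again an order ideal, then $(\mathcal T\setminus\{x\})\cup\{y\}$ also has maximal weight. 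I would prove this by induction on $n$, controlling partial sums of weights via the bound $\sum_{t\in\mathcal T}\mathrm{cont}(t)\unlhd n\mu'$ sharpened along the filtration of the ideal. Granting the principle, the incomparable branches at the bottom of $P$ persist and yield at least two Pareto-maximal weights, hence at least two maximal-weight families of size $n$. I expect this exchange principle — that dominance-maximality of the family weight is stable under the natural exchange moves on closed families, so that the low-size branching cannot merge away — to be the main obstacle.
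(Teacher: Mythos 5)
Your proposal takes a genuinely different route from the paper (which works with the lexicographically greatest weight family $\{t,t^{+1},\ldots,t^{+(n-1)}\}$ and shows that outside the three listed cases there is a second family whose weight has fewer parts and is therefore dominance-incomparable with it), and your route has a genuine gap precisely where you flag it. The difficulty is not only that the ``greedy/exchange principle'' is unproven; it is that even granting your construction of two order ideals $\mathcal{I}_1\neq\mathcal{I}_2$ of size $n$ with dominance-incomparable weights $w_1$, $w_2$, this does not yet show that either family has maximal weight. There could a priori be a third ideal whose weight dominates both $w_1$ and $w_2$, in which case the unique maximal weight could still be achieved by a unique family. Your $n=2$ arguments escape this because there you enumerate \emph{all} order ideals of size $2$ (via the covers of $t^\circ$) and check their weights are pairwise incomparable; for $n\ge 3$ you have no such enumeration and no certificate that any one of your candidate weights is dominance-maximal. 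This is exactly what the paper's use of lexicographic maximality buys: a lexicographically greatest achieved weight is automatically dominance-maximal among achieved weights, so once a single incomparable weight is exhibited (by counting parts: the lexicographically-minimal-type family has no entry equal to $\mu_e+n-1$, so its weight has fewer parts than $\lambda_{\max}$), two distinct maximal weights follow at once. Your proposed exchange principle, by contrast, would require a global structural statement about dominance-maximal ideals that the paper never needs and that is far from obvious (greedy constructions are not in general compatible with the dominance order).

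Two smaller points. First, your reduction to order ideals routes through Lemma~\ref{lemma:minimalimpliesclosed}, whose proof depends on Proposition~\ref{prop:type}, which the paper states without proof; the paper's own argument for Proposition~\ref{prop:unique} does not use closure at all. Second, the parts of your argument that are complete --- the cases $m=1$, $n=1$, rectangular $\mu$ with $n=2$, and non-rectangular $\mu$ with $n=2$ --- are correct and in fact supply more detail on the uniqueness direction than the paper does. If you want to salvage your framework for $n\ge 3$, the cleanest fix is to import the paper's idea: show directly that the ideal $\{t^\circ,t^{\circ+1},\ldots,t^{\circ+(n-1)}\}$ obtained by repeatedly incrementing the entry in the highest removable corner has lexicographically greatest weight (hence is dominance-maximal), and then exhibit any second closed family whose weight has strictly fewer parts.
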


\begin{proof}
Let $t$ be the $\mu$-tableau such that $(a,b)t = b$ for all $(a,b) \in [\mu]$. 
Let~$e$ be least such that row $e$ of $[\mu]$ 
has a removable box. The box $(e,\mu_e)$ of $t$ contains $\mu_e$; this is the greatest number
appearing in $t$. 
For $c \in \N$, let $t^{+c}$ be the conjugate-semistandard tableau obtained from $t$ by replacing this entry
with $\mu_e + c$. Let $\eta = \mu'$. Note that $\eta$ has $\mu_e$ parts, with $\eta_{\mu_e} = e$.
The weight of the conjugate-semistandard tableau family  $\{t, t^{+1}, \ldots, t^{+(n-1)} \}$
is the partition $\lambda_{\max}$ 
such that $[\lambda_{\max}]$ is obtained from $[n \eta]$ 
by removing $n-1$ boxes from row $\mu_e$ (these correspond to the entries equal to $\mu_e$
changed in the definition of $t^{+c}$) and inserting one box
in each of the previously empty rows $\mu_e+1, \ldots, \mu_e+n-1$.
Since the first $\mu_e-1$ rows of $[n\eta]$ are unaffected, it is clear
that $\{t, t^{+1}, \ldots, t^{+(n-1)} \}$ is the conjugate-semistandard tableau family
of shape $\mu^n$ and lexicographically
maximal weight. (This is a special case of Corollary~\ref{cor:lexMax} and \cite[Theorem~4.2]{Iijima}.)

Assume that $m \not= 1$.
If $n \ge 3$ or $\mu$ is not a rectangle, then the conjugate-semistandard
tableau family of lexicographically minimal type has no entry equal to $\mu_e+n-1$.
Its weight has fewer parts than $\lambda_{\max}$, and so 
is incomparable with $\lambda_{\max}$ in the dominance order.
\end{proof}

We give a brief example. The conjugate-semistandard tableau family
of shape $(3,3)^3$ and lexicographically maximal weight is
\[ \Bigl\{ \, \young(123,123)\, , \quad \young(123,124)\, , \quad \young(123,125)\, \Bigr\}. \]
As seen in the proof of Proposition~\ref{prop:unique}, 
its weight is $(6,6,4,1,1)$, obtained from $[3\mu'] = [(6,6,6)]$ by removing
two boxes from the lowest row, and inserting new boxes in rows $4$ and $5$.
The conjugate-semistandard tableau family of lexicographically minimal type is
\[ \Bigl\{ \, \young(123,123)\, , \quad \young(123,124)\, , \quad \young(123,134)\, \Bigr\}; \]
it has the incomparable weight $(6,5,5,2)$.

Recall that $\kappa = \nu'$ if $m$ is even and $\kappa = \nu$ if $m$ is odd.

\begin{corollary}\label{cor:unique} Let $m \ge 2$.
\begin{thmlist}
\item There is a unique partition $\lambda$, maximal in the dominance order on partitions,
such that~$S^\lambda$ is a summand of $H_\mu^\nu$ if and only if either $\nu = (n)$ or
$\mu$ is rectangular and $\nu$ has exactly two parts.

\item There is a unique partition $\lambda$, minimal in the dominance order on partitions,
such that~$S^\lambda$ is a summand of $H_\mu^\nu$ if and only if either $\kappa = (1^n)$
or $\mu$ is rectangular and $\kappa = (2^c,1^d)$ for some $c,d \in \N$.
\end{thmlist}
\end{corollary}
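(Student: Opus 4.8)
The plan is to deduce Corollary~\ref{cor:unique} from Theorems~\ref{thm:mainMaximal} and~\ref{thm:main} by reducing each part to a statement about conjugate-semistandard tableau family \emph{tuples}, and then bootstrapping Proposition~\ref{prop:unique} (which handles a single family, i.e.\ the case of one column) to the general tuple case. I would prove part (ii) first, since by the sign-twist argument in \S\ref{sec:proof} part (i) follows from part (ii) applied to $H_{\mu'}^\nu$ (when $m$ is even) or $H_{\mu'}^{\nu'}$ (when $m$ is odd): a unique maximal $\lambda$ for $H_\mu^\nu$ corresponds under conjugation to a unique minimal $\lambda'$ for the appropriate sign-twisted module, and $\mu$ is rectangular if and only if $\mu'$ is; one just has to track that $\kappa = (1^n)$ for $H_{\mu'}^\nu$ or $H_{\mu'}^{\nu'}$ translates back to $\nu = (n)$ for $H_\mu^\nu$, and $\kappa = (2^c,1^d)$ translates to ``$\mu$ rectangular and $\nu$ has exactly two parts'' (the parts being $c+d$ and $c$, up to reordering). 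So the real content is part (ii).

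For part (ii): by Theorem~\ref{thm:main}, the minimal Specht module summands of $H_\mu^\nu$ correspond to the minimal types of conjugate-semistandard tableau family tuples of shape $\mu^\kappa$, where $\kappa$ has parts $\kappa_1 \ge \cdots \ge \kappa_c$. A tuple $(\mathcal{T}_1,\ldots,\mathcal{T}_c)$ has type equal to the join (componentwise, since weights add) of the types of the individual families $\mathcal{T}_i$ of shape $\mu^{\kappa_i}$; minimality of the tuple's type is equivalent to minimality of each $\mathcal{T}_i$'s type, so the set of minimal types of tuples is the ``componentwise join'' of the sets of minimal types of the factors. Hence there is a unique minimal $\lambda$ for $H_\mu^\nu$ precisely when, for each part size $\kappa_i$ occurring in $\kappa$, there is a unique minimal type among conjugate-semistandard tableau families of shape $\mu^{\kappa_i}$. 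I would then invoke Proposition~\ref{prop:unique}, which asserts uniqueness of the \emph{maximal} weight for a single family of shape $\mu^d$ when $m=1$, or $d=1$, or $\mu$ rectangular and $d=2$; the analogous statement for minimal type is obtained by the same sign twist as above (conjugating $\mu$, which preserves rectangularity). Since we have assumed $m \ge 2$, the single-family uniqueness for minimal type holds exactly when $d = 1$, or $\mu$ rectangular and $d = 2$. Applying this to each distinct part $\kappa_i$: uniqueness holds for all of them iff every part of $\kappa$ equals $1$, i.e.\ $\kappa = (1^n)$, or $\mu$ is rectangular and every part of $\kappa$ is $1$ or $2$, i.e.\ $\kappa = (2^c, 1^d)$. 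Translating back via the definition of $\kappa$ ($\kappa = \nu'$ if $m$ even, $\kappa = \nu$ if $m$ odd) gives the stated condition on $\nu$.

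The one genuine subtlety — the step I expect to be the main obstacle — is the claim that ``minimality of a tuple's type is equivalent to minimality of each factor's type,'' which requires knowing that a factor $\mathcal{T}_i$ with non-minimal type can always be replaced by one of strictly smaller type \emph{of the same shape} $\mu^{\kappa_i}$, and conversely that distinct minimal single-family types produce distinct (and still minimal) tuple types after joining. The first direction is Lemma~\ref{lemma:minimalimpliesclosed} combined with the closure-shrinking argument there (replace entries one at a time, getting a strictly smaller closed type). The converse — that joining minimal types of distinct families with different part multiplicities cannot accidentally coincide with a join involving a non-minimal factor — needs a short argument: if some factor had non-minimal type, replacing it by a strictly dominance-smaller one of the same shape strictly decreases the join in dominance order (joins are monotone under dominance on each summand, an easy lemma), contradicting minimality of the tuple. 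I would also need to confirm that the ``componentwise join'' operation on \emph{sets} of minimal types preserves the property ``has exactly one element'' iff each factor does — which is clear once one checks that the join map $(\lambda^{(1)}, \ldots, \lambda^{(c)}) \mapsto \lambda^{(1)} \sqcup \cdots \sqcup \lambda^{(c)}$ on tuples of minimal types is injective when restricted to the minimal ones (two tuples with the same join must agree factor by factor since the part-size data of $\kappa$ lets one read off which parts came from which $\mathcal{T}_i$ — here one uses that a minimal-type family of shape $\mu^d$ is determined in a controlled way by $d$ and $\mu$). Once these bookkeeping lemmas are in place, the corollary follows by direct case analysis on the distinct parts of $\kappa$.
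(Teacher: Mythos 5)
Your route is essentially the paper's: the published proof derives both parts in one line from Proposition~\ref{prop:unique} together with Theorems~\ref{thm:mainMaximal} and~\ref{thm:main}, and the factor-by-factor reduction you describe (tuple type equals the join of the factor types, and join is monotone in each argument) is exactly the content that makes that one line work; routing (i) through (ii) by the sign twist is equivalent to the paper's direct appeal to Theorem~\ref{thm:mainMaximal}. One correction, though: converting Proposition~\ref{prop:unique} (unique \emph{maximal weight} of a family of shape $\mu^d$) into what part (ii) needs (unique \emph{minimal type} of a family of shape $\mu^d$) is not a sign twist and does not involve conjugating $\mu$. For a fixed shape the type is by definition the conjugate of the weight, and conjugation reverses the dominance order, so the minimal types are precisely the conjugates of the maximal weights of families of the \emph{same} shape $\mu^d$. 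Your detour lands on the correct criterion only because rectangularity is conjugation-invariant.

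The one genuine gap is in the direction ``some factor shape $\mu^{\kappa_i}$ admits two minimal types $\Rightarrow$ the tuple admits two minimal types''. Injectivity of the join map gives two \emph{distinct} joins of all-minimal-factor tuples, but not that two of them are \emph{minimal} among all tuple types: a priori one such join could strictly dominate some other tuple type and fail to be minimal, leaving a unique minimum. To close this, pass to weights and use that adding a fixed partition is a dominance order-embedding ($\alpha + \gamma \unlhd \beta + \gamma$ if and only if $\alpha \unlhd \beta$, by cancelling partial sums), so incomparable factor weights yield incomparable tuple weights. Let $T_1$ be the tuple with every factor at its lexicographically greatest weight; the greedy argument in the proof of Corollary~\ref{cor:lexMax} shows its weight $\Lambda$ is the lexicographically greatest tuple weight, hence dominance-maximal. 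Let $T_2$ agree with $T_1$ except that factor $i$ is replaced by the family of lexicographically minimal type from the proof of Proposition~\ref{prop:unique}, whose weight is incomparable with that factor's lexicographically greatest weight. Then the weight of $T_2$ is incomparable with $\Lambda$ and is dominated by some maximal tuple weight necessarily different from $\Lambda$, so there are at least two maximal tuple weights, that is, two minimal tuple types. (The paper's ``follows immediately'' silently absorbs this step as well, so you were right to flag it as the main obstacle.)
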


\begin{proof}
This follows immediately from  Proposition~\ref{prop:unique} using 
Theorem~\ref{thm:mainMaximal} for~(i) 
and Theorem~\ref{thm:main} for (ii).
\end{proof}

When $\mu = (a^b)$, the unique closed conjugate-semistandard tableau family of shape $\mu^2$ has
type $(a+1,a^{2b-2},a-1)$. Using this it is routine to determine the partitions
$\lambda$ in Corollary~\ref{cor:unique} explicitly. For example, if $m$ is even
then the unique minimal partition $\lambda$ such
that $S^\lambda$ is a summand of $H_{(a^b)}^{(n-c,c)}$ is 
$\bigl( (a+1)^{c},a^{(2b-2)c+b(n-2c)},(a-1)^c \bigr)$.

We end with an observation on invariants of special linear groups.

\begin{corollary}
Let $k \in \N$. Suppose that $m$ is odd 
and $n = |\CS(\mu,k)|$. 
Let $d = mn/k$ and let $E$ be a $b$-dimensional complex vector space. There is a unique polynomial
invariant of degree $n$ for the action of $\SL(E)$ on $\nabla^{\mu}(E)$.
\end{corollary}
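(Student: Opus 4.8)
The plan is to translate the statement into the language of symmetric group representations and then apply the machinery already built in the paper. A polynomial invariant of degree $n$ for the action of $\SL(E)$ on $\nabla^\mu(E)$ corresponds, after taking $E$ of sufficiently high dimension $b$, to the trivial $\SL(E)$-module appearing in $\Sym^n(\nabla^\mu(E))$; equivalently it corresponds to a summand $\nabla^\lambda(E)$ of $\Sym^n(\nabla^\mu(E))$ with $\lambda$ a rectangular partition $(d^b)$ whose number of columns is $d = mn/k$. Here I am using that the one-dimensional representations of $\GL(E)$ are powers of the determinant $\nabla^{(1^b)}$, and a $\GL(E)$-summand $\nabla^\lambda(E)$ restricts to a copy of the trivial $\SL(E)$-module precisely when $\lambda = (d^b)$ for some $d$; since the total degree is $mn$, we must have $db = mn$, and $d = mn/k$ exactly when $b = k$. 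So the claim is really: for $\nu = (n)$ with $n = |\CS(\mu,k)|$ and $m$ odd, the partition $\bigl( (mn/k)^k \bigr)$ labels a summand of $s_{(n)} \circ s_\mu$, and it does so with multiplicity exactly $1$.

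First I would invoke Theorem~\ref{thm:main} (or rather its consequence via Corollary~\ref{cor:lexMax} and the lexicographic analysis). Since $m$ is odd, $\kappa = \nu = (n)$, so summands of $H_\mu^{(n)}$ correspond to types of conjugate-semistandard tableau family tuples of shape $\mu^{(n)}$, i.e.\ of single conjugate-semistandard tableau families of shape $\mu^n$. With $n = |\CS(\mu,k)|$, there is a distinguished family, namely $\CS(\mu,k)$ itself: the set of \emph{all} conjugate-semistandard $\mu$-tableaux with entries in $\{1,\ldots,k\}$. By the symmetry of the Schur function $s_\mu$ (as used in the proof of Corollary~\ref{cor:lexMin}), the weight of this family is $\bigl( (mn/k)^k \bigr) = (d^k)$, a rectangular partition with exactly $k$ parts. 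The key point is that this is the \emph{only} conjugate-semistandard tableau family of shape $\mu^n$ whose weight is the rectangle $(d^k)$: any family of shape $\mu^n$ uses at least $k$ distinct entry-values (since there are only $|\CS(\mu,k-1)| < n$ tableaux with entries in $\{1,\ldots,k-1\}$), and if it uses more than $k$ values its weight has more than $k$ parts, hence is not $(d^k)$; if it uses exactly $k$ values $\{1,\ldots,k\}$ and has weight the rectangle, then every value occurs $d$ times, which by a simple counting / exchange argument forces the family to be all of $\CS(\mu,k)$. (One can see this directly: $\CS(\mu,k)$ is the unique such family because removing any tableau and adding another with entries in $\{1,\ldots,k\}$ changes the entry-multiset unless the two tableaux have the same content, but among tableaux of content $d$ at each value there is no room to deviate.)

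Granting that $(d^k)$ is the type of a unique conjugate-semistandard tableau family of shape $\mu^n$, Theorem~\ref{thm:main} shows $S^{(d^k)}$ is a summand of $H_\mu^{(n)}$ provided $(d^k)$ is in fact a \emph{minimal} type — but I do not even need minimality: what I need is that $S^{(d^k)}$ occurs, and that its multiplicity is $1$. For occurrence I would argue that $(d^k)$ is minimal: any conjugate-semistandard tableau family of shape $\mu^n$ has type with at least $k$ parts and total size $mn$, so among such types $(d^k)$ with $d = mn/k$ is smallest in the dominance order (a partition of $mn$ into exactly the minimum possible number $k$ of parts, all equal, is dominated by no other partition of $mn$ into $\ge k$ parts — in fact it is the unique minimum). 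Hence $(d^k)$ is the lexicographically (and dominance-) least type, so by Theorem~\ref{thm:main} $S^{(d^k)}$ is a summand of $H_\mu^{(n)}$. For the multiplicity, Theorem~\ref{thm:toFamily} bounds the multiplicity of $S^{(d^k)}$ in $(S^\mu \oslash S^{(n)})\indmn = H_\mu^{(n)}$ by the number $q$ of conjugate-semistandard tableau family tuples of shape $\mu^{(n)}$ and type $(d^k)$, which we have argued is $1$; and Proposition~\ref{prop:nonzero} (together with Lemma~\ref{lemma:Mtilde}, exactly as in the proof of Theorem~\ref{thm:main}) shows it is at least $1$. So the multiplicity is exactly $1$. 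Finally I would translate back: $S^{(d^k)}$ a summand of $H_\mu^{(n)}$ with multiplicity $1$ gives $\nabla^{(d^k)}(E)$ a summand of $\Sym^n(\nabla^\mu E)$ with multiplicity $1$ for $E$ of dimension $\ge k$, and taking $\dim E = b = k$ yields $d = mn/k$ columns, $b$ rows, i.e.\ a power of the determinant, which restricts to the trivial $\SL(E)$-module; the multiplicity-one statement is the assertion of a unique invariant of degree $n$.

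The main obstacle I anticipate is the uniqueness claim — that $\CS(\mu,k)$ is the \emph{only} conjugate-semistandard tableau family of shape $\mu^n$ with weight the rectangle $(d^k)$, equivalently that $q = 1$. The counting lower bound $|\CS(\mu,k-1)| < n = |\CS(\mu,k)|$ forces at least $k$ distinct values, and having weight a rectangle with $k$ parts forces exactly $k$ values each occurring $d$ times; the remaining step is to show that the only family of shape $\mu^n$ with entries in $\{1,\ldots,k\}$ in which every value $j$ occurs exactly $d = |\CS(\mu,k)|\,m/k$ times is $\CS(\mu,k)$ in full. This is plausible but needs a genuine argument — most cleanly, one observes that $\CS(\mu,k)$ has exactly this weight (Schur-function symmetry), that $|\CS(\mu,k)| = n$, and that a family is a \emph{subset} of $\CS(\mu,k)$ of size $n$ precisely when it equals $\CS(\mu,k)$; one then only has to rule out families using a value $> k$, which would push the number of parts of the weight above $k$. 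I expect this to go through quickly once set up correctly, but it is the one place where care is required.
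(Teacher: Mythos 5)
Your overall strategy matches the paper's — identify $\CS(\mu,k)$ as the unique conjugate-semistandard tableau family of shape $\mu^n$ with the relevant rectangular weight, get multiplicity one from Theorem~\ref{thm:toFamily} and Proposition~\ref{prop:nonzero}, and translate the resulting Specht module summand into a power of the determinant — and your uniqueness argument for the family (entries forced to lie in $\{1,\ldots,k\}$, so the family is a subset of $\CS(\mu,k)$ of size $|\CS(\mu,k)|$) is clean and in fact more self-contained than the paper's appeal to Algorithm~\ref{alg:lex}. But there is a genuine error: you conflate the \emph{weight} of the family with its \emph{type}. By Definition~\ref{defn:families}(v) the type is the \emph{conjugate} of the weight, and Theorem~\ref{thm:main} says the minimal summands of $H_\mu^{(n)}$ are labelled by minimal \emph{types}. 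The weight of $\CS(\mu,k)$ is indeed $(d^k)$ (each of the $k$ values occurs $d = mn/k$ times), but the Specht module it produces is $S^{(k^d)}$, not $S^{(d^k)}$. The claim that $S^{(d^k)}$ is a summand is false in general: for $\mu = (2,1)$ and $k=2$ one has $n=2$, $d=3$, and $s_{(3,3)}$ does not occur in $s_{(2)} \circ s_{(2,1)} = s_{(4,2)} + s_{(3,2,1)} + s_{(2,2,2)} + \cdots$, whereas $s_{(2,2,2)} = s_{(k^d)}$ does. Correspondingly the correct choice of dimension is $\dim E = d$, giving $\nabla^{(k^d)}(E) \cong \bigl(\bigwedge^d E\bigr)^{\otimes k} \cong \det^k$, rather than your $\dim E = k$.

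The same confusion breaks your minimality argument, which is also backwards as stated: among partitions of $mn$ with at least $k$ parts, the rectangle $(d^k)$ with exactly $k$ equal parts is the unique dominance \emph{maximum}, not minimum (fewer, larger parts dominate). The paper instead shows (Corollary~\ref{cor:lexMin} and the remark following it) that $(k^d)$ is the \emph{lexicographically} least type of a family of shape $\mu^n$; since the lexicographically least element of a set of partitions is automatically dominance-minimal within that set, Theorem~\ref{thm:main} then applies. If you replace $(d^k)$ by its conjugate $(k^d)$ throughout, justify minimality via lexicographic minimality of the type (or otherwise), and take $\dim E = d$, your argument goes through and coincides with the paper's.
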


\begin{proof}
As seen in the proof of Corollary~\ref{cor:lexMin},
the conjugate-semistandard tableau family of shape $\mu^n$
consisting of all tableaux with greatest
entry at most $k$ has lexicographically minimal type $(k^d)$. 
(This proves \cite[Conjecture~2.2]{Agaoka}.) 
By the remark following Corollary~\ref{cor:lexMin}, the multiplicity of $S^{(k^d)}$ in
$H_\mu^{(n)}$ is $1$. Hence $\nabla^{(k^d)}(E)$ has multiplicity $1$ in $\Sym^n \nabla^\mu (E)$.
Since 
\[ \nabla^{(k^d)}(E) \cong \bigl(\bigwedge^d E\bigr)^{\otimes k} \cong \det{}^{\!k},\] 
the result follows.
\end{proof}

\section*{Acknowledgements}

The authors thank two anonymous referees for their careful reading of an earlier version
of this paper.

\def\cprime{$'$} \def\Dbar{\leavevmode\lower.6ex\hbox to 0pt{\hskip-.23ex
  \accent"16\hss}D} \def\cprime{$'$}
\providecommand{\bysame}{\leavevmode\hbox to3em{\hrulefill}\thinspace}
\providecommand{\MR}{\relax\ifhmode\unskip\space\fi MR }
\renewcommand{\MR}[1]{\relax} 
\providecommand{\MRhref}[2]{%
  \href{http://www.ams.org/mathscinet-getitem?mr=#1}{#2}
}
\providecommand{\href}[2]{#2}
\renewcommand{\bibname}{\relax}

\end{document}